\theoremstyle{plain}
\newtheorem{thm}{Theorem}[section]
\newtheorem{lem}[thm]{Lemma}
\newtheorem{rem}[thm]{Remark}
\newtheorem{prop}[thm]{Proposition}
\theoremstyle{definition}
\newcommand{\Th}{\Omega_{\hh}}
\newcommand{\taun}{\Omega_{\hh}}
\renewcommand{\P}{K}
\newcommand{\E}{K}
\newcommand{\F}{F}
\newcommand{\e}{e}
\newcommand{\V}{\textsf{v}}
\newcommand{\hh}{h}
\newcommand{\mP}{\vert\P\vert}
\newcommand{\mF}{\vert\F\vert}
\newcommand{\hP}{\hh_\P}
\newcommand{\hE}{\hh_\E}
\newcommand{\hF}{\hh_\F}
\newcommand{\he}{\hh_\e}
\newcommand{\restrict}[2]{{#1}_{|#2}}
\newcommand{\DIM}{d}
\newcommand{\RHS}{\textrm{RHS}}
\newcommand{\LHS}{\textrm{LHS}}
\newcommand{\TERM}[2]{\textrm{#1}_{#2}}
\newcommand{\ASSUM}[2]{\mathbf{({#1}_{#2}})}
\newcommand{\EOD}{\end{document}}
\DeclareMathOperator{\curl} {\mathbf{curl}}
\DeclareMathOperator{\curlF}{\mathbf{curl}_\F}
\let\div\relax
\DeclareMathOperator{\div}{div}
\DeclareMathOperator{\divF}{div_\F}
\DeclareMathOperator{\rotF}{rot_\F}
\DeclareMathOperator{\DeltaF}{\Delta_{\F}}
\DeclareMathOperator{\curlbold}{\mathbf{curl}}
\newcommand{\bv}{\mathbf{b}}
\newcommand{\cv}{\mathbf{c}}
\newcommand{\ev}{\mathbf{e}}
\newcommand{\fv}{\mathbf{f}}
\newcommand{\jv}{\mathbf{j}}
\newcommand{\nv}{\mathbf{n}}
\newcommand{\qv}{\mathbf{q}}
\newcommand{\tv}{\mathbf{t}}
\newcommand{\uv}{\mathbf{u}}
\newcommand{\vv}{\mathbf{v}}
\newcommand{\wv}{\mathbf{w}}
\newcommand{\xv}{\mathbf{x}}
\newcommand{\yv}{\mathbf{y}}
\newcommand{\Av}{\mathbf{A}}
\newcommand{\Bv}{\mathbf{B}}
\newcommand{\Cv}{\mathbf{C}}
\newcommand{\Ev}{\mathbf{E}}
\newcommand{\Fv}{\mathbf{F}}
\newcommand{\Vv}{\mathbf{V}}
\newcommand{\wvV}{\wv_{\V}}
\newcommand{\xvV}{\xv_{\V}}
\newcommand{\bve}{\bv_\e}
\newcommand{\bvF}{\bv_\F}
\newcommand{\bvE}{\bv_\E}
\newcommand{\bvP}{\bv_\P}
\newcommand{\xvE}{\xv_\E}
\newcommand{\xvF}{\xv_\F}
\newcommand{\xvP}{\xv_\P}
\newcommand{\zerov}{\mathbf{0}}
\newcommand{\uvt}{\uv_t}
\newcommand{\Bvt}{\Bv_t}
\newcommand{\nvF}{\nv_{\F}}
\newcommand{\tauvF}{{\bm\tau}_{\F}}
\newcommand{\evh}{\mathbf{e}_{\hh}}
\newcommand{\jvh}{\mathbf{j}_{\hh}}
\newcommand{\jvht}{\mathbf{j}_{\hh,t}}
\newcommand{\uvh}{\mathbf{u}_{\hh}}
\newcommand{\vvh}{\mathbf{v}_{\hh}}
\newcommand{\wvh}{\mathbf{w}_{\hh}}
\newcommand{\Bvh}{\mathbf{B}_{\hh}}
\newcommand{\Cvh}{\mathbf{C}_{\hh}}
\newcommand{\Evh}{\mathbf{E}_{\hh}}
\newcommand{\Fvh}{\mathbf{F}_{\hh}}
\newcommand{\Vvh}{\mathbf{V}_{\hh}}
\newcommand{\Wvh}{\mathbf{W}_{\hh}}
\newcommand{\Zvh}{\mathbf{Z}_{\hh}}
\newcommand{\evhu}{\evh^{\uv}}
\newcommand{\evhE}{\evh^{\Ev}}
\newcommand{\evhB}{\evh^{\Bv}}
\newcommand{\evhJ}{\evh^{\jv}}
\newcommand{\evhBt}{\ev^{\Bv}_{\hh,\ts}}
\newcommand{\evhut}{\ev^{\uv}_{\hh,\ts}}
\newcommand{\uvht}{\uv_{\hh,t}}
\newcommand{\Bvht}{\Bv_{\hh,t}}
\newcommand{\INTP}{I}
\newcommand{\INTPINTP}{\INTP\!\INTP}
\newcommand{\jvI}{\mathbf{j}_{\INTP}}
\newcommand{\uvI}{\mathbf{u}_{\INTP}}
\newcommand{\BvI}{\mathbf{B}_{\INTP}}
\newcommand{\EvI}{\EvII}
\newcommand{\BvII}{\mathbf{B}_{\INTP\!\INTP}}
\newcommand{\EvII}{\mathbf{E}_{\INTP\!\INTP}}
\newcommand{\BvIIt}{\mathbf{B}_{\INTP\!\INTP,t}}
\newcommand{\uvIt}{\uv_{\INTP,t}}
\newcommand{\uvtI}{\uv_{t,\INTP}}
\newcommand{\Nbb}{\mathbb{N}}
\newcommand{\Pbb}{\mathbb{P}}
\newcommand{\Rbb}{\mathbb{R}}
\newcommand{\as}{a}
\newcommand{\bs}{b}
\newcommand{\cs}{c}
\newcommand{\ms}{m}
\newcommand{\ps}{p}
\newcommand{\qs}{q}
\renewcommand{\ss}{s}
\newcommand{\ts}{t}
\newcommand{\us}{u}
\newcommand{\vs}{v}
\newcommand{\ws}{w}
\newcommand{\xs}{x}
\newcommand{\ys}{y}
\newcommand{\zs}{z}
\newcommand{\As}{A}
\newcommand{\Bs}{B}
\newcommand{\Cs}{C}
\newcommand{\Ds}{D}
\newcommand{\Es}{E}
\newcommand{\Hs}{H}
\newcommand{\Is}{I}
\newcommand{\Ss}{S}
\newcommand{\Ts}{T}
\newcommand{\Vs}{V}
\newcommand{\Xs}{X}
\newcommand{\asP}{\as^{\P}}
\newcommand{\bsP}{\bs^{\P}}
\newcommand{\csP}{\cs^{\P}}
\newcommand{\xsP}{\xs_{\P}}
\newcommand{\ysP}{\ys_{\P}}
\newcommand{\zsP}{\zs_{\P}}
\newcommand{\xsF}{\xs_{\F}}
\newcommand{\ysF}{\ys_{\F}}
\newcommand{\zsF}{\zs_{\F}}
\newcommand{\xsE}{\xs_{\e}}
\newcommand{\ysE}{\ys_{\e}}
\newcommand{\zsE}{\zs_{\e}}
\newcommand{\wsF}{\ws_{\F}}
\newcommand{\ash}{a_{\hh}}
\newcommand{\msh}{m_{\hh}}
\newcommand{\psh}{p_{\hh}}
\newcommand{\qsh}{q_{\hh}}
\newcommand{\rsh}{r_{\hh}}
\newcommand{\vsh}{v_{\hh}}
\newcommand{\Qsh}{Q_{\hh}}
\newcommand{\Vsh}{V_{\hh}}
\newcommand{\ashP}{\ash^{\P}}
\newcommand{\mshP}{\msh^{\P}}
\newcommand{\calC}{\mathcal{C}}
\newcommand{\calE}{\mathcal{E}}
\newcommand{\calF}{\mathcal{F}}
\newcommand{\calG}{\mathcal{G}}
\newcommand{\calI}{\mathcal{I}}
\newcommand{\calT}{\mathcal{T}}
\newcommand{\calV}{\mathcal{V}}
\newcommand{\calFh}{\calF_{\hh}}
\newcommand{\HS}[1]{H^{#1}}
\newcommand{\LS}[1]{L^{#1}}
\newcommand{\CS}[1]{C^{#1}}
\newcommand{\WS}[1]{W^{#1}}
\newcommand{\PS}[1]{\mathbbm{P}_{#1}}
\newcommand{\HSzr}[1]{H^{#1}_{0}}
\newcommand{\LSzr}[1]{L^{#1}_{0}}
\DeclareMathOperator{\NODE}{node}
\DeclareMathOperator{\EDGE}{edge}
\DeclareMathOperator{\FACE}{face}
\DeclareMathOperator{\CELL}{cell}
\newcommand{\VshNode}  {\Vsh^{\NODE}}
\newcommand{\setNode} {\calV_{\hh}}
\newcommand{\setNodeP}{\calV^{\P}_{\hh}}
\newcommand{\setNodeF}{\calV^{\P}_{\hh}}
\newcommand{\setEdge} {\calE_{\hh}}
\newcommand{\setEdgeP}{\calE^{\P}_{\hh}}
\newcommand{\setEdgeF}{\calE^{\F}_{\hh}}
\newcommand{\VvhEdge} {\Vvh^{\EDGE}}
\newcommand{\setFace} {\calF_{\hh}}
\newcommand{\setFaceP}{\calF^{\P}_{\hh}}
\newcommand{\VvhFace}{\Vvh^{\FACE}}
\newcommand{\VvhtFace}{\widetilde{\Vv}_{\hh}^{\FACE}}
\newcommand{\setCell}{\calC_{\hh}}
\newcommand{\VsCell} {\Vs^{\CELL}}
\newcommand{\VshCell}{\VsCell_{\hh}}
\newcommand{\scalFace}[2]{\big[#1,#2\big]_{\FACE}}
\newcommand{\scalEdge}[2]{\big[#1,#2\big]_{\EDGE}}
\newcommand{\scalEdgeP}[2]{\big[#1,#2\big]_{\EDGE,\P}}
\newcommand{\scal}[2]{\big(#1,#2\big)}
\newcommand{\chih}{{\bm\chi}_{\hh}}
\newcommand{\chihP}{{\bm\chi}_{\hh,\P}}
\newcommand{\Pin} [1]{\Pi^{\nabla}_{#1}}
\newcommand{\PinP}[1]{\Pi^{\nabla,\P}_{#1}}
\newcommand{\Pinomega}[1]{\Pi^{\nabla,\omega}_{#1}}
\newcommand{\Piz}[1]{\Pi^0_{#1}}
\newcommand{\PizP}[1]{\Pi^{0,\P}_{#1}}
\newcommand{\Pizomega}[1]{\Pi^{0,\omega}_{#1}}
\newcommand{\Pinv}[1]{\Pi^{\nabla}_{#1}}
\newcommand{\PinvP}[1]{\Pi^{\nabla,\P}_{#1}}
\newcommand{\Pinvomega}[1]{\Pi^{\nabla,\omega}_{#1}}
\newcommand{\Pizv}[1]{\boldsymbol{\Pi}^0_{#1}}
\newcommand{\PizvP}[1]{\boldsymbol{\Pi}^{0,\P}_{#1}}
\newcommand{\PizboldhP}{\boldsymbol \Pi^{0,\P}_{\hh}}
\newcommand{\PinablaF}{\Pi^{\nabla,\F}}
\newcommand{\PiboldnablaF}{\boldsymbol\Pi^{\nabla,\F}}
\newcommand{\PiboldnablaE}{\boldsymbol\Pi^{\nabla,\E}_1}
\newcommand{\taubold}{\boldsymbol \tau}
\DeclareMathOperator{\dof}{dof}
\DeclareMathOperator{\dofiP}{\dof^{\P}_i}
\newcommand{\SPa}[1]{\Ss^{\P}_{#1}}
\newcommand{\SPm}[1]{\tilde{\Ss}^{\P}_{#1}}
\newcommand{\SP}{\Ss^{\P}}
\newcommand{\SPEdge}{\SP_{\EDGE}}
\newcommand{\SPFace}{\SP_{\FACE}}
\newcommand{\ABS} [1]{\left\vert #1 \right\vert}
\newcommand{\norm}[2]{\Vert #1 \Vert_{#2}}
\newcommand{\snorm}[2]{\vert #1 \vert_{#2}}
\newcommand{\varepsilontilde}{\widetilde\varepsilon}
\DeclareMathOperator{\inv}{inv}
\DeclareMathOperator{\appr}{appr}
\newcommand{\CI}  {\Cs_{\INTP}}
\newcommand{\CC}  {C_C}
\newcommand{\CSob}{C_S}
\newcommand{\CD}{C_D}
\newcommand{\CP}{C_P}
\newcommand{\Cinv}  {\Cs_{\inv}}
\newcommand{\Cappr}  {\Cs_{\appr}}
\newcommand{\CII}  {\Cs_{\INTPINTP}}
\newcommand{\Cpi}  {\Cs_{\pi}}
\newcommand{\Cintp}{\Cs_{\INTP}}
\newcommand{\Cnst}[1]{\Cs_{#1}}
\newcommand{\cE}{\cs^\E}
\newcommand{\cn}{\cs_\hh}
\newcommand{\ch}{\cs_\hh}
\newcommand{\cnE}{\cn^\E}
\newcommand{\chE}{\ch^\E}
\newcommand{\cht}{\widetilde{\cs}_\hh}
\newcommand{\cst}{\widetilde{\cs}}
\newcommand{\ctilde}{\widetilde c}
\newcommand{\ctildeE}{\widetilde c^\E}
\newcommand{\ctilden}{\widetilde c_\hh}
\newcommand{\ctildenE}{\widetilde c_\hh^\E}
\DeclareMathOperator{\tr}{tr}
\DeclareMathOperator{\trcurl}{\tr_{\curl}}
\DeclareMathOperator{\trdiv}{\tr_{\div}}
\DeclareMathOperator{\IntpVh}{\calI^{\setNode}}
\DeclareMathOperator{\IntpEh}{\calI^{\setEdge}}
\DeclareMathOperator{\IntpFh}{\calI^{\setFace}}
\DeclareMathOperator{\IntpPh}{\calI^{\setCell}}
\title{The virtual element method for the 3D resistive magnetohydrodynamic model}
\author{\normalsize{Louren\c{c}o Beir\~ao~da~Veiga\thanks{Dipartimento di Matematica e Applicazioni,
    Universit\`a degli Studi di Milano-Bicocca, Italy (lourenco.beirao@unimib.it, franco.dassi@unimib.it, lorenzo.mascotto@unimib.it)},\;
Franco Dassi\footnotemark[1],\;
Gianmarco Manzini\thanks{IMATI-CNR, Pavia, Italy (marco.manzini@imati.cnr.it)},\;
Lorenzo Mascotto\footnotemark[1]
\thanks{Faculty of Mathematics, University of Vienna, 1090 Vienna, Austria (lorenzo.mascotto@univie.ac.at)}
\footnotemark[3]
\thanks{Corresponding author}}}
\date{}
\begin{document}   
\maketitle

\begin{abstract}
\noindent
We present a four-field virtual element discretization for the time-dependent resistive Magnetohydrodynamics equations
in three space dimensions, focusing on the semi-discrete formulation.
The proposed method employs general polyhedral meshes and guarantees velocity and magnetic fields that are divergence free up to machine precision.
We provide a full convergence analysis under suitable regularity assumptions,
which is validated by some numerical tests.
  
\medskip\noindent
\textbf{AMS subject classification}: 65N12; 65N15; 76E25; 76W05.

\medskip\noindent
\textbf{Keywords}: resistive magnetohydrodynamics; Navier-Stokes equations; Maxwell's equations; virtual element method; polyhedral meshes.
  
\end{abstract}

% ----------------------------
% Preliminary commands
% ----------------------------

\raggedbottom
\setcounter{secnumdepth}{4}
\setcounter{tocdepth}{4}

% ----------------------------
% Paper
% ----------------------------

\section{Introduction}

Magnetohydrodynamics (MHD) is the physical-mathematical framework that
describes the dynamics of magnetic fields in electrically conducting
fluids, e.g., plasmas and ionized
gases~\cite{Boyd-Sandserson:2003-book,Brackbill:1985}.
Indeed, the electrically charged particles moving in a plasma
generate an electromagnetic field that self-consistently interacts
with the fluid motion through the Lorentz force acting on such
particles.
The theoretical framework of MHD has widely been used to develop
predictive mathematical models, for example, in astrophysics (solar
wind and space weather~\cite[Chapter~1]{Feng:2020}, galactic
jets~\cite{Komissarov-Porth:2021}), geophysics (dynamo
theory~\cite{Krause-Radler:1980}), and nuclear fusion (design of
fusion reactors, ignition in \emph{inertial confinement
fusion}~\cite{Craxton:2015,Johnson:1984,Gaffney:2018}).
Roughly speaking, MHD resorts on a strongly nonlinear coupling of a
fluid flow submodel and an electromagnetic one.
In a general setting, as the one considered in this work, the fluid
flow submodel is based on the incompressible Navier-Stokes equations
and the electromagnetic submodel on the Maxwell equations.
A thorough description of the MHD models, their variants, derivation,
and physical and mathematical properties can be found in many
textbooks and review papers,
e.g.,~\cite{Davidson:2002-book,Moreau:1990-book,Boyd-Sandserson:2003-book,Brackbill:1985}
just to mention a few.

%% Challenges in MHD simulations
Despite its ubiquity in scientific applications, which is reflected by
the number of papers and books published in the last few decades, the
computer resolution of the system of the MHD equations is still a
formidable and challenging task and may pose pitfalls to the numerical
scientists.
Here, we shortly review some of the more critical points, with
particular attention on the usefulness of polyhedral meshes and
related schemes, always referring all interested readers to the rich
technical literature for specific and detailed expositions.

%% MESH DEFORMATION
First, vorticial and/or shear flows may lead to large domain
deformations, i.e., severe mesh deformations, also with the occurence
of non-planar mesh faces, in the Lagrangian and Arbitrary
Lagrangian-Eulerian (ALE) numerical
frameworks~\cite{Hirt-Amsden-Cook:1974,Robinson-et-al:2008}.
Such mesh deformations are treated by remeshing the computational
domain; then, by remapping the unknowns to the new
mesh~\cite{Rieben-White-Wallin-Solberg:2007}.
An optimal remap algorithm should be accurate, cheap, and
feature-preserving, i.e., it should preserve the divergence-free
constraints, the solution positivity, and conserve mass, momentum, and
energy, cf.~\cite{Burton-Morgan-Charest-Kenamond-Fung:2018}.
However, current remap algorithms can be too expensive, prone to
significant accuracy losses, and not feature-preserving.
Therefore, we infer that a good approximation method should be
accurate and stable even in presence of severe mesh deformations in
order to reduce the number of remeshing steps in a numerical
simulation.

%% NUMERICAL DIFFUSION AND DISPERSION
Next, low-order accurate algorithms can be affected by an excessive
numerical diffusion and dispersion.
For example, numerical diffusion can smear physical vorticity and
shocks at an unacceptable level and even force the magnetic
reconnection in ideal MHD models, which is clearly an artificial and
physically meaningless effect.
Numerical diffusion and dispersion can be reduced by increasing the
order of the approximation and by using carefully selected polygonal
or polyhedral meshes.
In~\cite{Ding-Yang:2017,Joaquim-Scheer:2016}, the finite difference time domain (FDTD) method~\cite{Yee:1966} is applied to a grid of hexagonal
prisms and yields much less numerical dispersion and anisotropy than
on regular hexahedral grids where such method is normally considered.

%% DIVERGENCE-FREE CONDITION
The final major point that we want to consider is related to the
divergence-free nature of both the fluid velocity and the magnetic flux fields.
If such constraints are not accurately satisfied, possibly at the
machine precision level, unreliable or even unphysical solutions may
result from a numerical simulation.
The consequence of the violation of the divergence-free constraint for
the magnetic flux field has widely been investigated in the literature.
It was seen that the numerical simulations are prone to significant
errors, see, e.g.,
\cite{Brackbill:1985,Brackbill-Barnes:1980,Dai-Woodward:1998,Toth:2000},
as fictitious forces and an unphysical behavior may appear,
cf.~\cite{Dai-Woodward:1998}.
For this reason, in the last two decades, a great effort has been
devoted to the development of divergence-free numerical
approximations.
Just to mention a few possible approaches, the divergence-free
constraint of the magnetic flux field can explicitly be enforced by
introducing a Lagrange multiplier in the set of the unknowns,
cf.~\cite{Dedner-Kemm-Kroner-Munz-Schnitzer-Wesenberg:2002}; by using
a special flux limiter in the formulation of the numerical scheme,
cf.~\cite{Kuzmin-Klyushnev:2020}; by minimizing a special energy
functional in a least squares finite element formulation,
cf.~\cite{Jiang:1998}.
However, such fixing strategies can result in costly inefficient
schemes, they can be in conflict with each other or not suitable to
polyhedral meshes.
A major breakthrough was provided by realizing that ``classical''
numerical discretizations fail to reproduce the divergence-free nature
of these fields if the discrete version of the divergence applied to
the discrete version of the rotational operator does not annihilate at
the zero machine precision.
In fact, a small but not zero remainder can significantly accumulate
over the many cycles of a long-time calculation, thus breaking the
constraint.
This understanding has led to the design of numerical approximation
techniques in the framework of compatible/mimetic methods, as for
example
in~\cite{Rieben-White-Wallin-Solberg:2007,Robinson-et-al:2008,Lipnikov-Reynolds-Nelson:2013}.
A major drawback of compatible/mimetic discretizations
%% as the mimetic finite difference method
is that they are only low-order accurate and can be too dispersive
and diffusive.
The discontinuous Galerkin (DG) method offers high-order accurate
discretizations suitable to general polyhedral meshes that can reduce
the overall numerical diffusion and dispersion.
Nonetheless, DG methods are not compatible and may require a costly
divergence-cleaning procedure based on solving an additional global
equation, thus resulting in very expensive calculations.
Finally, we note that the FEM literature for MHD models on tetrahedral
and hexahedral meshes is very broad.
Here, we limit ourselves to recalling the divergence-free schemes
in~\cite{Hu-Ma-Xu:2017,Greif-Li-Schoetzau-Wei:2010,Hiptmair-Li-Mao_Zheng:2018},
see the related work~\cite{Schoetzau:2004} as well, and the partial
convergence analysis results from~\cite{He:2015, Prohl:2008}

%% opening on VEM
\medskip
In this work, we start exploring and presenting the design of
numerical approximations of the MHD equations based on the virtual element method (VEM).
Our goal is to address at least part of the issues discussed so far,
but in a framework that makes it possible to address all of them in
the future developments of this research project.
The VEM, initially proposed
in~\cite{BeiraodaVeiga-Brezzi-Cangiani-Manzini-Marini-Russo:2013} for
the numerical approximation of elliptic problems, is a Galerkin
projection method such as the finite element method (FEM).
The major difference between the VEM and the FEM is that in the formulation
and practical implementation of the VEM we do not need an explicit
knowledge of the basis functions that generate the finite element
approximation spaces.
In fact, all the bilinear forms and linear functionals of the discrete
variational formulation are built using suitable polynomial
projections that are always computable from a careful choice of the
degrees of freedom.
For these reasons, such approximation spaces and the method itself are
dubbed as ``virtual''.
Local polynomial consistency and an additional stability term provide
the well-posedness of the final discretization.
Since the formulation of the method does not need a closed form of the
basis functions, the resulting computational framework is extremely
powerful and offers important advantages with respect to the FEM.
We can build approximation spaces that are in principle of any order
of accuracy and global regularity, suitable to general polyhedral meshes,
even including nonconvex or nonconforming elements in 2D and 3D, and,
importantly, satifying other additional properties such as being part
of a discrete de Rham chain.
This last property has the major consequence that both the numerical
approximation of the fluid velocity and the magnetic flux field are
intrinsically divergence free, a fact that we can verify numerically
up to the machine precision.

The discrete divergence-free property in the VEM has been firstly
analyzed
in~\cite{BeiraodaVeiga-Lovadina-Vacca:2017,BeiraodaVeiga-Lovadina-Vacca:2018}
(see also~\cite{Antonietti-BeiraodaVeiga-Mora-Verani:2014}) in the
framework of the Stokes and Navier-Stokes problems.
Later, it has been extended to other fluid flow models, a short list
of representatives
being~\cite{Cangiani-Gyrya-Manzini:2016,%
  Chen-Li-Liu:2017,%
  Chernov-Marcati-Mascotto:2021,%
  Chen-Liu:2019, Caceres-Gatica:2017,%
  Caceres-Gatica:2018,%
  Gatica-Munar-Sequeira:2018,%
  Chen-He-Wang-Wang:2019,%
  Vacca:2018,%
  Caceres-Gatica-Sequeira:2017,%
  BeiraodaVeiga-Mora-Vacca:2019,%
  BeiraodaVeiga-Dassi-Vacca:2020,%
  Irisarri-Gauke:2019,%
  Dassi-Vacca:2020}.
On the other hand, De-Rahm complexes in the virtual element setting
have been introduced in~\cite{BeiraodaVeiga-Brezzi-Marini-Russo:2016}
and then improved (and applied to the analysis of magnetostatic
problems) in
\cite{BeiraodaVeiga-Brezzi-Dassi-Marini-Russo:Magneto2017,%
  BeiraodaVeiga-Brezzi-Dassi-Marini-Russo:2018,%
  BeiraodaVeiga-Brezzi-Dassi-Marini-Russo:2018SIAM}.
The Maxwell's equations were the target
of~\cite{BeiraodaVeiga-Dassi-Manzini-Mascotto:2021},
see also~\cite{Cao-Chen-Guo:2021, Cao-Chen-Guo-Lin:2021}, and their analysis hinges upon
technical tools from~\cite{BeiraodaVeiga-Mascotto:2021}.  Other
interesting results along these directions are contained
in~\cite{Chen-Huang:2020} that paves the way for a VEM discretization
of the Bianchi-Einstein equations.

In the present contribution we present a low-order four-field VEM
formulation of the time-dependent MHD equations in three dimensions,
in the spirit of \cite{Hu-Ma-Xu:2017}.
Compared to~\cite{Hu-Ma-Xu:2017}, we further prove convergence of the semi-discrete scheme
and employ a convenient divergence-free discretization of the velocity field
(whereas the Taylor-Hood element is used in~\cite{Hu-Ma-Xu:2017}).
In order to discretize the unknown fields (pressure, and velocity,
magnetic, and electric fields), we exploit the compatible VEM discrete
spaces outlined above.
Since the novelty of the method is in the space discretization
approach, we focus on the semi-discrete version of the scheme, and
leave the development of different time advancing techniques, together
with the associated linear and nonlinear solvers, to future
publications.
The ensuing VEM scheme employs general polyhedral meshes and,
differently from the majority of the FEM schemes available in the literature,
guarantees that both the velocity and magnetic fields are divergence
free (up to machine precision).
An important achievement of the present contribution is that, under
suitable regularity conditions on the exact solution, we are able to
prove the linear convergence of the scheme in the ``natural'' norms of
the problem.
To the best of our knowledge, there are few convergence analyses of FEM schemes for the time-dependent MHD model in the literature,
and none for the 4-field formulation here presented.
For instance, in~\cite{Hu-Qiu-Shi:2020}, a lowest order FEM (with discrete velocity that is not divergence free) has been designed and analyzed for the \emph{stationary} MHD system.
Thus, a major contribution of this work is that we also provide the
details for the convergence analysis for the fully nonlinear model;
such an analysis can be simplified to the FE setting, thus leading to
possibly useful results also for FEM.
Clearly, our analysis has the drawback of requiring a sufficiently
regular exact solution; thus, our theoretical results should be
intended as a guarantee that, at least in good conditions, the method
delivers an accurate solution.
In the final part of the article, we present some numerical tests that
show the good performance of the proposed method.

The paper is organized as follows. 
After discussing some basic notation and definitions in Section~\ref{section:notation},
we review the continuous problem in
Section~\ref{section:continuous-problem}.
In Section~\ref{section:VEM:Navier-Stokes}, we introduce the
velocity/pressure discrete fields and some theoretical results
essential to the approximation of the convection term.
In Section~\ref{section:VEM:Maxwell}, after presenting the discrete
spaces adopted for the electromagnetic fields and some related
property, we state the discrete method.
In Section~\ref{section:VEM:analysis}, we show and prove the
theoretical convergence results, which we validate in
Section~\ref{section:numerical:experiments} with some numerical tests.
Finally, in Section~\ref{section:conclusions}, we draw some
conclusions and outline future developments.
%%%%%%%%%%%%

bigskip

\section{Notation} \label{section:notation}

\subsection{Mesh notation and functional spaces}
\label{subsubsection:meshes}

%% mesh family
Let $\calT=\{\Th\}_{\hh}$ be a family of mesh decompositions $\Th$ of
the computational domain $\Omega$ uniquely identified by the value of
the mesh size parameter $\hh$.
Every mesh $\Th$ is a finite collection of polytopal elements $\P$
forming a finite covering of $\Omega$, i.e.,
$\overline{\Omega}=\bigcup_{\P\in\Th}\P$, planar faces $\setFace$,
straight edges $\setEdge$, and vertices $\setNode$.
The mesh elements are nonoverlapping in the sense that the
intersection in $\Rbb^{3}$ of any pair of them, e.g., $\P$ and
$\P^{\prime}$, has Lebesgue measure (volume) equal to zero, i.e.,
$\ABS{\P\cap\P^{\prime}}=0$ if $\P\neq\P^{\prime}$.
Accordingly, the intersection of the elemental boundaries of $\P$ and
$\P^{\prime}$ is either the empty set, or a set of common vertices,
edges, or faces.
%%
%% Similar conditions hold for the mesh faces $\F$.

\medskip
For every element~$\P$, we denote its volume, center, and diameter by~$\mP$, $\bvP=(\xsP,\ysP,\zsP)^T$, and~$\hP=\max_{\xv,\yv\in\P}\ABS{\xv-\yv}$,
and its set of faces, edges, and vertices by~$\setFaceP$, $\setEdgeP$, and~$\setNodeP$, respectively.
As usual, the maximum of the diameters~$\hP$ for~$\P\in\Th$ is the mesh size, i.e., $\hh=\max_{\P\in\Th}\hP$.
The boundary of every element $\P$ is denoted by $\partial\P$ and formed by a finite set of nonintersecting planar faces
$\F\in\setFaceP$ so that $\partial\P=\bigcup_{\F\in\setFaceP}\F$.
For every face~$\F$, we denote its area, center, and diameter
by~$\mF$, $\bvF=(\xsF,\ysF,\zsF)^T$, and~$\hF=\max_{\xv,\yv\in\F}\ABS{\xv-\yv}$,
and its set of edges and vertices by~$\setEdgeF$ and~$\setNodeF$, respectively.
The boundary of every face $\F$ is the planar polygon $\partial\F$,
which is formed by a finite number of nonintersecting straigh segments
$\e\in\setEdgeF$ so that $\partial\F=\bigcup_{\e\in\setEdgeF}\e$.
We denote the length of edge $\e$ by $\he$ and its center, which we
shall also refer to as the edge mid-point, by $\bve=(\xsE,\ysE,\zsE)$.

\medskip
We assume that all the meshes~$\Th$ of a given sequence$\{\Th\}_{\hh}$ satisfy these conditions for~$\hh\to0$: there exists a
real constant factor~$\gamma \in (0,1)$ that is independent of $\Th$
and $\hE\in\Th$ such that
\begin{description}
\smallskip
\item[$\ASSUM{M}{1}$] \textbf{shape-regularity}: all the
  elements~$\P\in\Th$ and faces~$\F\in\setFace$ are ``$\gamma$-shape''
  regular~\cite{Ciarlet:1978};
  \smallskip
\item[$\ASSUM{M}{2}$] \textbf{uniform scaling}: $\gamma\hP\leq\hF$ for
  every face~$\F\in\setFaceP$ of every element~$\P\in\Th$, and,
  analogously, $\gamma\hF\leq\he$ for every edge~$\e\in\setEdgeF$ of
  every~$\F\in\setFaceP$.
\end{description}
Throughout the paper, we shall refer to $\ASSUM{M}{1}$-$\ASSUM{M}{2}$
as the \emph{mesh regularity assumptions}.
Such assumptions could be weakened~\cite{Brenner-Guan-Sung:2017,Brenner-Sung:2018,BeiraoDaVeiga-Lovadina-Russo:2017,Cao-Chen:2018};
however, for the sake of presentation, we stick to a simpler setting.

\medskip
On every face $\F\in\setFace$, we define a local coordinate system
$(\xi_1,\xi_2)$ and denote the differentiation along $\xi_1$ and
$\xi_2$ as $\partial_{\xi_1}$ and $\partial_{\xi_2}$, respectively.
The corresponding second order derivatives are denoted by~$\partial^2_{\xi_1 \xi_1}$ and $\partial^2_{\xi_2 \xi_2}$.
Then, we consider the two-dimensional vector-valued
field~$\vv=(\vs_1,\vs_2):\F\subseteq\Rbb^2\rightarrow\Rbb^2$ and the
scalar field~$\vs:\F\subseteq\Rbb^2\rightarrow\Rbb$, and let the
face-based differential operators $\divF$, $\rotF$, $\curlF$ and
$\DeltaF$ applied to $\vv(\xi_1,\xi_2)$ and $\vs(\xi_1,\xi_2)$ be
defined as
\begin{subequations}
  \label{eq:2D:diff:ops}
  \begin{align}
    \divF\vv   &:= \partial_{\xi_1}\vs_1 + \partial_{\xi_2}\vs_2,
    \qquad
    \rotF\vv   := \partial_{\xi_2}\vs_1 - \partial_{\xi_1}\vs_2,
    \label{eq:2D:diff:ops:A}\\[0.5em]
    \curlF\vs  &:= \big(\partial_{\xi_2}\vs, -\partial_{\xi_1}\vs\big)^T,
    \qquad
    \DeltaF\vs := \partial^2_{\xi_1\xi_1}\vs + \partial^2_{\xi_2\xi_2}\vs.
    \label{eq:2D:diff:ops:B}
  \end{align}
\end{subequations}
We denote the partial derivatives along the directions $x$, $y$,
and~$z$ by~$\partial_x$, $\partial_y$, and~$\partial_z$, respectively,
and the corresponding second derivatives by~$\partial^2_{xx}$,
$\partial^2_{yy}$, and~$\partial^2_{zz}$.
We define the Laplace, divergence, and curl operators of the
three-dimensional field~$\vv=(\vs_1,\vs_2,\vs_3)^T$ as follows:
\begin{align*}
  &
  \Delta\vv := \partial^2_{xx}\vs_1 + \partial^2_{yy}\vs_2 + \partial^2_{zz}\vs_3,\qquad
  \div  \vv := \partial_x\vs_1   + \partial_y\vs_2    + \partial_z\vs_3,\\
  &
  \curl \vv := \big(
  \partial_y\vs_3 - \partial_z\vs_2,
  \partial_z\vs_1 - \partial_x\vs_3,
  \partial_x\vs_2 - \partial_y\vs_1
  \big)^T.
\end{align*}

\subsection{Functional spaces}
\label{subsection:functional-spaces}

Consider the polygonal/polyhedral domain $\Ds\subset\Rbb^{\DIM}$, $\DIM=2,3$, with (Lipschitz) boundary~$\partial\Ds$.
Throughout the paper, $\Ds$ can be either a mesh face~$\F$, a mesh element~$\P$, or the whole domain~$\Omega$.

\paragraph*{Sobolev spaces}
According to~\cite{Adams-Fournier:2003}, $\LS{2}(\Ds)$ denotes the
Lebesgue space of real-valued square integrable functions defined on
$\Ds$;
$\LSzr{2}(\Ds)$ is the subspace of the functions in $\LS{2}(\Ds)$ having zero average on~$\Ds$;
for any integer number~$\ss>0$, $\HS{s}(\Ds)$ is the Sobolev space~$s$ of the real-valued functions in $\LS{2}(\Ds)$ with all weak partial derivatives of order up to~$\ss$ in~$\LS{2}(\Ds)$;
$\big[\LS{2}(\Ds)\big]^{\DIM}$, $\big[\LSzr{2}(\Ds)\big]^{\DIM}$, and
$\big[\HS{s}(\Ds)\big]^{\DIM}$ are the vector version of these spaces.
The Sobolev spaces of noninteger orders are constructed by using the
interpolation theory and the Sobolev spaces of negative orders by
duality~\cite{Adams-Fournier:2003}.

We denote the Sobolev space on~$\partial\Ds$ by~$\HS{\ss}(\partial\Ds)$ and recall that the functions
in~$\HS{s}(\Ds)$ admit a trace on~$\partial\Ds$ when~$s>1/2$.
Since~$\Ds$ can be either a polygonal or a polyhedral domain, the
bound~$s<3/2$ must also be valid.
We denote the inner product in $\LS{2}(\Ds)$ and $\HS{s}(\Ds)$ by
$(\vs,\ws)_{\Ds}$ and $(\vs,\ws)_{\ss,\Ds}$, respectively; we also
denote the corresponding induced norms by $\norm{\vs}{\Ds}$ and
$\norm{\vs}{\ss,\Ds}$, and the seminorm in $\HS{s}(\Ds)$ by
$\snorm{\vs}{\ss,\Ds}$.
When~~$\Ds$ is the whole computational domain, we prefer to omit the subindex~$\Omega$ and rather use the notation $(\vs,\ws)$, $\norm{\vs}{}$,
$\norm{\vs}{\ss}$, etc. instead of $(\vs,\ws)_{\Omega}$,
$\norm{\vs}{\Omega}$, $\norm{\vs}{\ss,\Omega}$, etc.

In the light of these definitions, for a given~$s>0$, we introduce the functional spaces
\begin{align}
  \HS{s}(\div,  \Ds) &:= \Big\{ \vv\in\big[\HS{s}(\Ds)\big]^3 \mid\div \vv\in\HS{s}(\Ds)     \Big\},\label{eq:Hdiv:def}\\[0.25em]
  \HS{s}(\curl, \Ds) &:= \Big\{ \vv\in\big[\HS{s}(\Ds)\big]^3 \mid\curl\vv\in\big[\HS{s}(\Ds)\big]^3\label{eq:Hcurl:def}\big\}.
\end{align}
If~$s=0$, we write~$\HS{}(\div,\Ds)$ and~$\HS{}(\curl,\Ds)$ instead of
$\HS{0}(\div,\Ds)$ and~$\HS{0}(\curl,\Ds)$.
Let~$\nv_{\Ds}$ be the unit vector orthogonal to the
boundary~$\partial\Ds$ and pointing out of~$\Ds$.
According, e.g., to~\cite[Section~$3.5$]{Monk:2003}, for the spaces~\eqref{eq:Hdiv:def} and~\eqref{eq:Hcurl:def}, we can define the trace operators
\begin{align*}
  \trdiv: \HS{}(\div, \Ds) \to  \HS{-\frac12}(\partial\Ds) , \qquad
  \trcurl: \HS{}(\curl,\Ds) \to \big[\HS{-\frac12}(\partial\Ds)\big]^{\DIM},
  \qquad \DIM =2,3,
\end{align*}
which are such that
\begin{align*}
  \trdiv (\vv):= \nv_{\Ds} \cdot \vv, \qquad
  \trcurl(\vv):= \nv_{\Ds} \times\vv,
\end{align*}
for all sufficiently smooth vector-valued field~$\vv$.
These trace operators allow us to define the subspaces of
$\HS{s}(\div,\Ds)$ and $\HS{s}(\curl,\Ds)$
\begin{align*}
  \HSzr{s}(\div, \Ds) &:= \big\{ \vv\in\HS{s}(\div, \Ds) \mid \trdiv  \vv = 0 \big\},\\[0.25em]
  \HSzr{s}(\curl,\Ds) &:= \big\{ \vv\in\HS{s}(\curl,\Ds) \mid \trcurl \vv = \mathbf{0} \big\}.
\end{align*}
These subspaces incorporate the homogeneous boundary conditions in
their definition.
Finally, we define the norms for div and curl spaces:
for every sufficiently smooth field~$\vv$,
\[
\norm{\vv}{\curlbold}^2:= \norm{\vv}{0}^2 + \norm{\curlbold \vv}{0}^2,
\qquad \qquad
\norm{\vv}{\div}^2:= \norm{\vv}{0}^2 + \norm{\div \vv}{0}^2.
\]

\paragraph*{Bochner spaces}
Let $\Ts>0$ be a real number and $(\Xs,\norm{\cdot}{\Xs})$ a normed space, where~$\Xs$ can be either~$\LS{2}(\Omega)$ or~$\HS{s}(\Omega)$, $\ss\geq 0$.
According to~\cite{Evans:2010}, the Bochner space $\LS{p}(0,T;X)$ is
the space of functions $\vs$ such that the sublinear functional
\begin{align*}
  \norm{\vs}{\LS{p}(0,\Ts;\Xs)}
  = \begin{cases}
    \displaystyle\left(\int_{0}^{\Ts}\norm{\vs(t)}{\Xs}^{p}\,dt\right)^{1\slash{p}} & 1\leq p<\infty,\\
    \textrm{ess sup}_{\ts\in[0,\Ts]}\norm{\vs(t)}{\Xs}                                  & p=\infty,
  \end{cases}
\end{align*}
is a \emph{finite} norm for almost every $t\in[0,\Ts]$.
According to this notation, $\CS{}(0,\Ts;\Xs)$ is the space of the
continuous functions from $[0,\Ts]$ to $\Xs$.

\paragraph*{Polynomial spaces}
We denote the space of polynomials of degree~$\ell=0,1$ defined on the element~$\P$, the face~$\F$ and the edge~$\e$ by~$\PS{\ell}(\P)$, $\PS{\ell}(\F)$, and~$\PS{\ell}(\e)$, respectively.
We set $\PS{-1}(\P)=\PS{-1}(\F)=\PS{-1}(\e)=\{0\}$.
The space $\PS{1}(\P)$ is the span of the \emph{scaled monomials} defined as:
\begin{align*}
  \ms_0(\xv) = 1,\;
  \ms_1(\xv) = \frac{\xs-\xsP}{\hP},\;
  \ms_2(\xv) = \frac{\ys-\ysP}{\hP},\;
  \ms_3(\xv) = \frac{\zs-\zsP}{\hP} \quad\forall \xv=(\xs,\ys,\zs)^T\in\P.
\end{align*}
The bases of $\PS{1}(\F)$ and $\PS{1}(\e)$ are defined in a similar way.
We let $\PS{\ell}(\Th)$ denote the space of the piecewise
discontinuous polynomials of degree $\ell=0,1$ that are globally
defined on $\Omega$ and such that $\restrict{\qs}{\P}\in\PS{1}(\P)$
for all elements $\P\in\Th$.

\paragraph*{Orthogonal projections onto polynomial spaces}
In the forthcoming discrete formulation, given any~$\omega$ either
in~$\Th$, $\setFace$, or~$\setEdge$, we shall use the polynomial
projectors
$\Pizomega{\ell}:\LS{2}(\omega)\rightarrow\PS{\ell}(\omega)$,
$\ell=0,1$, and $\Pinomega{1}:\HS{1}(\omega)\rightarrow\PS{1}(\omega)$
defined on all the mesh objects~$\omega$.
The operator~$\Pizomega{\ell}$ is the orthogonal projection onto
constant ($\ell=0$) and linear $(\ell=1)$ polynomials with respect to
the inner product in $\LS{2}(\omega)$.
The operator $\Pinomega{1}$ is the orthogonal projection onto linear
polynomials with respect to the semi-inner product
in~$\HS{1}(\omega)$; we call it the \emph{elliptic projection}.
The elliptic projection $\Pinomega{\ell}\vs$ of a function
$\vs\in\HS{1}(\omega)$ is the linear polynomial solving the
variational problem
\begin{align}
  \big(\nabla(\Pinomega{1}\vs-\vs),\nabla\qs\big)_{\omega} = 0 \quad\forall\qs\in\PS{1}(\omega)
  \quad\textrm{and}\quad
  \int_{\partial \omega} \big( \Pinomega{1}\vs-\vs \big) = 0.
  \label{eq:elliptic:projector}
\end{align}
The second condition in~\eqref{eq:elliptic:projector} is considered to
ensure the uniqueness of the projection $\PinP{1}\vs$.

\medskip
With an abuse of notation, we extend these definitions in a component-wise manner to the
multidimensional projection operators
$\Pizomega{\ell}:\big[\LS{2}(\omega)\big]^{\DIM}\to\big[\PS{\ell}(\omega)\big]^{\DIM}$
and
$\Pinvomega{1}:\big[\HS{1}(\omega)\big]^{\DIM}\to\big[\PS{1}(\omega)\big]^{\DIM}$, $\DIM=2,3$.
In particular, we shall use the orthogonal projection
$\PizP{0}\nabla\vs$ of the gradient of a function $\vs\in\HS{1}(\P)$,
which is the constant vector polynomial solving the variational problem:
\begin{align*}
  \big(\PizP{0}\nabla\vs-\nabla\vs,\qv\big)_{\P} = 0 \quad\forall\qv\in\big[\PS{0}(\P)\big]^3.
\end{align*}

\medskip
For~$\ell =0,1$, we also define the global projection operators
$\Piz{\ell}:\LS{2}(\Omega)\to\PS{\ell}(\Th)$ and
$\Piz{\ell}:\big[\LS{2}(\Omega)\big]^{\DIM}\to\big[\PS{\ell}(\Th)\big]^{\DIM}$
as the operators respectively satisfying
$\restrict{\big(\Piz{\ell}\vs\big)}{\P}=\PizP{\ell}\big(\restrict{\vs}{\P}\big)$
and
$\restrict{\big(\Piz{\ell}\vv\big)}{\P}=\PizP{\ell}\big(\restrict{\vv}{\P}\big)$
for all mesh elements~$\P\in\Th$.

\medskip
Similarly, $\Pin{1}:\HS{1}(\Omega)\to\PS{1}(\Th)$ and
$\Pinv{1}:\big[\HS{1}(\Omega)\big]^{\DIM}\to\big[\PS{1}(\Th)\big]^{\DIM}$
are the global projection operators respectively satisfying
$\restrict{\big(\Pin{1}\vs\big)}{\P}=\PinP{1}\big(\restrict{\vs}{\P}\big)$
and
$\restrict{\big(\Pinv{1}\vv\big)}{\P}=\PinvP{1}\big(\restrict{\vv}{\P}\big)$
for all mesh elements~$\P\in\Th$.

\paragraph*{Some names for constants}
In some of the forthcoming estimates, we shall occasionally write explicit constants and denote them with different symbols, depending on their meaning.
Notably, we shall use the following notation:
\begin{itemize}
    \item $\CSob$ denotes a constant depending on a Sobolev embedding;
    \item $\CD$ denotes a constant depending on the shape of a domain;
    \item $\CP$ denotes a constant appearing in a Poincar\'e inequality;
    \item $\CI$ denotes a constant depending on an interpolation estimate with respect to functions in virtual element spaces;
    \item $\Cinv$ denotes a constant depending on an inverse estimate;
    \item $\Cappr$ denotes a constant depending on a polynomial approximation estimate.
\end{itemize}
We deem that this notation might help the reader to better follow some steps in the forthcoming proofs.

Henceforth, we use the letter ``$\Cs$'' to denote a strictly
positive constant that can take a different value at any occurrence.
The constant $\Cs$ is independent of the mesh size parameter $\hh$ but
may depend on the other parameters of the differential problem and
virtual element discretization such as the domain shape, the mesh
regularity constant $\gamma$, and the coercivity and continuity
constants of the bilinear forms used in the variational formulations
that will be introduced in the next sections.

%% command's definitions must be moved to defsVEM.tex
\newcommand{\REY}{Re}
\newcommand{\REM}{\REY_m}

\section{The continuous problem}
\label{section:continuous-problem}

Let $\Omega\subset\Rbb^3$ be a polyhedral domain with Lipschitz continuous boundary $\partial\Omega$.
We model the interaction of an electrically charged, incompressible
fluid having velocity~$\uv$ and pressure~$\ps$ with the
self-consistently generated electric field~$\Ev$ and magnetic flux
field~$\Bv$.
We denote the viscous Reynolds number by $\REY$, the magnetic Reynolds
number by $\REM$, and the Hartman number by~$s$.
Furthermore, we define the electric current density
\begin{equation} \label{jbold}
  \jv := \Ev + \uv\times\Bv.
\end{equation}
Henceforth, the subscript~$t$ as in~$\uvt$ and~$\Bvt$ denotes the time
derivative and~$[0,T]$ is the time integration interval for a given
final time~$\Ts>0$.

The MHD problem reads as follows: \emph{For all times~$t \in (0,T]$,
find~$\uv$, $\ps$, $\Ev$, and~$\Bv$, such that}
\begin{subequations}
  \label{eq:continuous:problem-strong}
  \begin{align}
    \label{} \uvt + (\nabla\uv)\uv - \REY^{-1}\Delta\uv -s \jv\times\Bv + \nabla\ps  &= \fv\phantom{0\zerov} \qquad\text{in }\Omega,\label{eq:continuous:problem-strong:a}\\[0.1em]
    \jv - \REM^{-1} \curl\Bv                                                         &= \zerov\phantom{0\fv} \qquad\text{in }\Omega,\label{eq:continuous:problem-strong:b}\\[0.1em]
    \Bvt + \curl\Ev                                                                 &= \zerov\phantom{0\fv} \qquad\text{in }\Omega,\label{eq:continuous:problem-strong:c}\\[0.1em]
    \div \Bv                                                                        &= 0\phantom{\zerov\fv} \qquad\text{in }\Omega,\label{eq:continuous:problem-strong:d}\\[0.1em]
    \div \uv                                                                        &= 0\phantom{\zerov\fv} \qquad\text{in }\Omega,\label{eq:continuous:problem-strong:e}
  \end{align}
\end{subequations}
where $(\nabla\uv)\uv=\sum_{i,j}(\partial_j\us_i)\us_j$.
The MHD equations~\eqref{eq:continuous:problem-strong} are completed
by the set of initial conditions for the velocity and the magnetic flux fields
\begin{equation} 
  \uv(\xv,0) = \uv_0(\xv),  \qquad
  \Bv(\xv,0) = \Bv_0(\xv), \qquad
  \forall\xv\in\Omega,
  \label{eq:initial:conditions}
\end{equation}
and the homogeneous boundary conditions
\begin{equation} 
  \uv(\xv,\ts)                  = \zerov,\qquad
  \Bv(\xv,\ts) \cdot \nv_{\Omega} = 0,     \qquad
  \Ev(\xv,\ts) \times\nv_{\Omega} = \zerov \qquad
  \forall\xv\in\partial\Omega,\quad\forall\ts\in [0,T].
  \label{eq:boundary:conditions}
\end{equation}
Equations~\eqref{eq:continuous:problem-strong:a}
and~\eqref{eq:continuous:problem-strong:e} describe the hydrodynamic
behavior of an electrically charged, incompressible fluid under the
action of an external force $\fv$ and the electromagnetic force
$\jv\times\Bv$ multiplied by the Hartmann number, which acts as a
coupling coefficient.
The electromagnetic force is
self-consistently generated by the electromagnetic fields $\Ev$ and
$\Bv$ satisfying equations~\eqref{eq:continuous:problem-strong:b},
\eqref{eq:continuous:problem-strong:c}, and
\eqref{eq:continuous:problem-strong:d}.
These last three equations describe the electromagnetic submodel in
the magneto-hydrodynamics approximation~\cite{Boyd-Sandserson:2003-book}.
The incompressibility of the fluid velocity and the solenoidal nature
of the magnetic field require that both $\uv_0$ and $\Bv_0$
in~\eqref{eq:initial:conditions} are divergence free.

\medskip
\noindent
The weak formulation of
problem~\eqref{eq:continuous:problem-strong}-\eqref{eq:boundary:conditions}
reads as follows:
\emph{For almost every $t\in[0,T]$, find
$(\uv(t),\ps(t)$, $\Ev(t),\Bv(t))\in\big[\HSzr{1}(\Omega)\big]^3\times
\LSzr{2}(\Omega)\times\HSzr{}(\curl,\Omega)\times\HSzr{}(\div,\Omega)$
such that}
\begin{subequations}
  \label{eq:continuous:problem-weak}
  \begin{align}
    (\uvt,\vv) + \REY^{-1}\as(\uv,\vv) + \bs(\vv,\ps) + \cs(\uv;\uv,\vv) - \ss(\jv\times\Bv,\vv)  &= (\fv,\vv)\phantom{0} \qquad\forall\vv\in\big[\HSzr{1}(\Omega)\big]^3,\label{eq:MHD:weak:A} \\[0.25em]
    (\jv,\Fv)  - \REM^{-1}(\Bv,\curl\Fv)                                                          &= 0\phantom{(\fv,\vv)} \qquad\forall\Fv\in\HSzr{}(\curl,\Omega),\label{eq:MHD:weak:B}        \\[0.25em]
    (\Bvt,\Cv) + (\curl\Ev,\Cv)                                                                  &= 0\phantom{(\fv,\vv)} \qquad\forall\Cv\in\HSzr{}(\div,\Omega),\label{eq:MHD:weak:C}         \\[0.25em]
    \bs(\uv,\qs)                                                                                 &= 0\phantom{(\fv,\vv)} \qquad\forall\qs\in\LSzr{}(\Omega),\label{eq:MHD:weak:D}
  \end{align}
\end{subequations}
where $\jv$ is defined in~\eqref{jbold}.
The bilinear forms $\as(\cdot,\cdot)$ and $\bs(\cdot,\cdot)$, and the
trilinear form $\cs(\cdot;\cdot,\cdot)$ are defined as
\begin{align*}
  \as:\big[\HS{1}(\Omega)\big]^3\times\big[\HS{1}(\Omega)\big]^3\to\Rbb:                                 & \quad\as(\uv,\vv)     := (\nabla\uv,\nabla\vv)   = \displaystyle  \int_{\Omega}\nabla\uv:\nabla\vv,\\[1.em]
  \bs:\big[\HS{1}(\Omega)\big]^3\times\LSzr{2}   (\Omega)       \to\Rbb:                                 & \quad\bs(\vv,\qs)     := -(\div\vv,\qs)          = \displaystyle -\int_{\Omega}\qs\div\vv,\\[1.em]
  \cs:\big[\HS{1}(\Omega)\big]^3\times\big[\HS{1}(\Omega)\big]^3\times\big[\HS{1}(\Omega)\big]^3\to\Rbb: & \quad\cs(\wv;\uv,\vv) := ( (\nabla\uv) \wv,\vv) = \displaystyle  \int_{\Omega}(\nabla\uv)\wv\cdot\vv,
\end{align*}
where $\Av:\Bv=\sum_{ij}\As_{ij}\Bs_{ij}$ and~$ (\Av)\bv\cdot\cv=\sum_{ij}\As_{ij}\bs_{j}\cs_{i}$ for the matrices
$\Av=(\As_{ij})$ and $\Bv=(\Bs_{ij})$, and the vectors $\bv=(\bs_i)$
and $\cv=(\cs_i)$.
We also consider the local forms that are defined by splitting the
above forms on the mesh elements $\P\in\Th$:
\begin{align*}
  \as(\uv,\vv)     &=\sum_{\P\in\Th}\asP(\uv,\vv)        \quad\textrm{and}\quad\asP(\uv,\vv)     = (\nabla\uv,\nabla\vv)_{\P}, \\[0.5em]
  \bs(\vv,\qs)     &=\sum_{\P\in\Th}\bsP(\uv,\qs)        \quad\textrm{and}\quad\bsP(\uv,\qs)     = -(\div\vv,\qs)_{\P},        \\[0.5em]
  \cs(\vv;\uv,\wv) &=\sum_{\P\in\Th}\csP(\vv;\uv,\wv)    \quad\textrm{and}\quad\csP(\vv;\uv,\wv) = ((\nabla\uv)\vv,\wv)_{\P}.
\end{align*}

%% WELL-posedness
\medskip
Several (partial) results are available from the technical literature
about the well-posedness of the MHD model in the strong and weak
formulation, e.g., problems~\eqref{eq:continuous:problem-strong}
and~\eqref{eq:continuous:problem-weak} and their variants, cf.
\cite{Duvaut-Lions:1972,Sermange-Temam:1983,Miao-Yuan-Zhang:2007,Renardy:2011,He-Huang-Wang:2014,Fukumoto-Zhao:2019}
and the citations therein.
Some of these results concerning existence, uniqueness and stability
of the solution fields $(\uv,\ps,\Ev,\Bv)$ have been derived under
specific assumptions that may reduce the generality of the model.
To the best of our knowledge, the full mathematical understanding of
the MHD model is still an open issue and an active research area.
This topic is beyond the scope of our work; thus, we shall simply
assume that the MHD model is well-posed, at least in the setting that
we are using in this paper.

%%%%%%%%%%%%%%%%%%%%%%%%%%%%%%%%%

\section{Virtual element spaces for the incompressible flow equations}
\label{section:VEM:Navier-Stokes}

\subsection{The discrete pressure space}
\label{subsection:pressure-space}
We approximate the pressure unknown in the space of piecewise
discontinuous, constant functions with zero average on $\Omega$:
\begin{align*}
  \Qsh
  := \Big\{ \qsh\in\LSzr{2}(\Omega)\,\mid\,\restrict{\qsh}{\P}\in\PS{0}(\P) \; \forall\P\in\Th   \Big\} 
  =  \PS{0}(\Th)\cap\LSzr{2}(\Omega).
\end{align*}
Every $\qsh\in\Qsh$ is uniquely determined by the set of constant
values $\big(\restrict{\qsh}{\P}\big)$.
Accordingly, we can approximate any scalar function
$\qs\in\LSzr{2}(\Omega)$ by its averages over the mesh elements.

\subsection{The discrete velocity space}
\label{subsection:velocity-space}

Here, we consider the ``lowest order'' version of the spaces
introduced
in~\cite{BeiraodaVeiga-Lovadina-Vacca:2018,BeiraodaVeiga-Dassi-Vacca:2020}.
This includes an ``enhancement'' procedure that allows for the
computation of an $L^2$ linear polynomial projector.
We refer to such papers for a better understanding of the motivations
behind this construction.

We define the nodal velocity space on a face~$\F\in\setFace$ as
\begin{align*}
  \Wvh(\F) := \Big\{
  \wvh\in\big[\HS{1}(\F)\big]^3
  \,\mid\,
  &
  \restrict{\wvh}{\partial\F}\in\big[\CS{0}(\partial\F)\big]^3,\;
  \restrict{\wvh}{\e}\in\big[\PS{1}(\e)\big]^3\,\forall\e\in\setEdgeF,\quad
  \nonumber\\
  &
  \DeltaF\wvh\in\big[\PS{2}(\F)\big]^3,
  \nonumber\\
  &
  \big( \wv_{\hh,\taubold} - \PiboldnablaF_1 \wv_{\hh,\taubold}, \qv \big)_{0,\F} = 0 \quad\forall\qv\in\big[\PS{2}(\F)\big]^2,\nonumber\\
  &
  \big( \ws_{\hh,\nv} - \PinablaF_1 \ws_{\hh,\nv}, \qs \big)_{0,\F} = 0 \quad\forall\qs\in\PS{2}(\F)\setminus\Rbb
  \Big\}.
\end{align*}
Above, we denoted the tangential and normal components of a given
field~$\wv$ by~$\wv_{\hh,\taubold}=(\nvF\times\wv)\times\nvF$
and~$\ws_{\hh,\nv}=\nvF\cdot\wv$, respectively.
We define the nodal velocity space on the whole boundary~$\partial\P$
of a mesh element~$\P$ as
\[
  \Wvh(\partial\P)
  := \Big\{ \wvh\in\big[\CS{0}(\partial\P)\big]^3\,\mid\,\restrict{\wvh}{\F}\in\Wvh(\F)\quad\forall\F\in\setFaceP \Big\}.
\]
Finally, we introduce the nodal velocity space on the
element~$\P\in\Th$ as follows:
\begin{align*}
  \Wvh(\P) :=
  \Big\{\,
  \wvh\in\big[\HS{1}(\P)\big]^3
  \,\mid \,
  &
  \restrict{\wvh}{\partial\P}\in\Wvh(\partial\P),
  \quad 
  \nonumber\\
  &
  \begin{cases}
  \Delta\wvh + \nabla\ss\in\xv\times\big[\PS{0}(\P)\big]^3\text{~for~some~}\ss\in\LSzr{2}(\P),\\
  \div\wvh\in\PS{0}(\P),
  \end{cases}
  \nonumber\\
  &
  \big(\wvh - \PiboldnablaE\wvh, \xv\times\qv \big)_{0,\E} = 0 \quad\forall\qv\in\big[\PS{0}(\P)\big]^3
  \,\Big\}.
\end{align*}
Every virtual element vector field $\wvh\in\Wvh(\P)$ is uniquely
determined by the set of values
$\big(\,(\wvV)_{\V\in\setNodeP},(\wsF)_{\F\in\setFaceP}\,\big)$, where
\begin{itemize}
\item $\wvV=\wvh(\xvV)$ is the value of~$\wvh$ at the vertex
  $\V\in\setNodeP$ (with position vector $\xvV$);
\item $\wsF$ is the scaled zero-th order moment of the normal component of
  $\wvh$ on the face~$\F\in\setFaceP$, given by
  \begin{align*}
    \wsF = \frac{1}{\mF}\int_\F\ws_{\hh,\nv}.
  \end{align*}
\end{itemize}
The unisolvence of these degrees of freedom can be proved as in~\cite{Antonietti-BeiraodaVeiga-Mora-Verani:2014, BeiraodaVeiga-Dassi-Vacca:2020}.
For any~$\wvh\in\Wvh(\P)$, the elliptic projection
$\PinvP{1}\wvh\in\big[\PS{1}(\P)\big]^{3}$, the $\LS{2}$-orthogonal
projection $\PizvP{1}\wvh\in\big[\PS{1}(\P)\big]^3$, and the
divergence of $\wvh$ are directly computable from such degrees of
freedom.
Finally, we define the global nodal velocity space by an
$\HS{1}$-conforming coupling of the local degrees of freedom:
\begin{align*}
  \Wvh:=\Big\{\,\wvh\in\big[\HSzr{1}(\Omega)\big]^3\,\mid\,\restrict{\wvh}{\P}\in\Wvh(\P)\quad\forall\P\in\Th\,\Big\}.
\end{align*}

\begin{rem} \label{remark:inf-sup}
As in~\cite{BeiraodaVeiga-Lovadina-Vacca:2018, BeiraodaVeiga-Dassi-Vacca:2020}, the couple of spaces~$\Wvh \times \Qsh$ satisfies a discrete inf-sup condition, i.e., there exists~$\beta>0$ independent of~$\hh$ such that
\[
\beta \le
\inf_{\qsh \in \Qsh}\sup_{\vvh \in \Wvh} \frac{b(\vvh,\qsh)}{\norm{\vvh}{1} \norm{\qsh}{}}.
\]
Furthermore, we also have the important property
\[
\div \Wvh \subseteq \Qsh.
\]
\end{rem}

%%%%%%%
\subsection{The virtual element bilinear forms $\ash(\cdot,\cdot)$ and $\msh(\cdot,\cdot)$}
%%%%%%%
%% stiffness bilinear form
\medskip
We use the elliptic projection operator $\PinvP{1}$ to define the
elemental bilinear form $\ashP:\Wvh(\P)\times\Wvh(\P)\to\Rbb$, which
mimics the $\HS{1}$-inner product on the element $\P$:
\begin{multline}   
  \ashP(\uvh,\vvh) :=
  \big(\nabla\PinvP{1}\uvh,\nabla\PinvP{1}\vvh\big)_{\P} +
  \SPa{}\big( (\Is-\PinvP{1})\uvh, (\Is-\PinvP{1})\vvh \big)\\
  \quad\forall\uvh,\,\vvh\in\Wvh(\P).
  \label{eq:velocity:H1:local:bilform}
\end{multline}
Here, $\SPa{}:\Wvh(\P)\times\Wvh(\P)\to\Rbb$ can be any computable symmetric bilinear form such that there exist two positive constants~$\sigma_*$ and~$\sigma^*$ independent of~$\hE$ satisfying
\begin{align*}
  \sigma_*\vert\vvh\vert^2_{1,\P}
  \leq\SPa{}(\vvh,\vvh)\leq
  \sigma^*\vert\vvh\vert^2_{1,\P}
  \quad\quad\forall\vvh\in\ker\big(\PinvP{1}\big)\cap\Wvh(\P).
\end{align*}
A ``standard'' choice for the stabilization is given by~\cite{BeiraodaVeiga-Dassi-Vacca:2020}
\begin{align*}
  \SPa{}(\uvh,\vvh) := \hP\sum_i \dofiP(\uvh)\,\dofiP(\vvh),
  \qquad\forall\uvh,\,\vvh \in \Wvh(\P),
\end{align*}
where the summation on the index $i$ is carried over all the elemental
degrees of freedom and $\dofiP(\wvh)$ is the bounded, linear
functional defined on $\Wvh(\P)$ providing the $i$-th degree of
freedom of $\wvh\in\Wvh(\P)$.
The summation term in $\SPa{}(\cdot,\cdot)$ is multiplied by $\hP$ to
have a consistent scaling for both terms of $\ashP(\cdot,\cdot)$ on
the right-hand side of~\eqref{eq:velocity:H1:local:bilform} with respect to the
element size.

The local bilinear form $\ashP(\cdot,\cdot)$ satisfies the two
fundamental properties of consistency and stability:
\begin{itemize}
\item \textbf{consistency}: for all $\qv\in\big[\PS{1}(\P)\big]^3$ and $\wvh\in\Wvh(\P)$,
\begin{align} \label{consistency:velocity:H1}
\ashP(\qv,\wvh) = \asP(\qv,\wvh);
\end{align}
\item \textbf{stability}: for all $\wvh\in\Wvh(\P)$,
  \begin{align}
    \alpha_*\norm{\wvh}{1,\P}^2
    \leq\ashP(\wvh,\wvh)\leq
    \alpha^*\norm{\wvh}{1,\P}^2,
    \label{eq:velocity:H1:stability}
  \end{align}
  with $\alpha_*=\min(1,\sigma_*)$ and $\alpha^*=\max(1,\sigma^*)$.
\end{itemize}
Finally, we define the global bilinear form
$\ash:\Wvh\times\Wvh\to\Rbb$ that we use in the virtual element
formulation of the MHD model:
\begin{align*}
  \ash(\uvh,\vvh) := \sum_{\P\in\Th} \ashP(\uvh,\vvh)
  \quad\quad\forall\uvh,\vvh\in\Wvh.
\end{align*}
In what follows, we use the discrete norm
\[
\norm{\vvh}{\Wvh}^2 := \ash(\vvh,\vvh) \qquad \forall \vvh \in \Wvh.
\]

%% mass bilinear form
\medskip\noindent
We use the $\LS{2}$-projection operator $\PizvP{1}$ to define the
bilinear form $\mshP:\Wvh(\P)\times\Wvh(\P)\to\Rbb$, which mimics the
local $\LS{2}$-inner product on the element $\P$:
\begin{align}
  \mshP(\uvh,\vvh) :=
  \big(\PizvP{1}\uvh,\PizvP{1}\vvh\big)_{\P} +
  \SPm{}\Big( (\Is - \PizvP{1})\uvh, (\Is - \PizvP{1})\vvh \Big)
  \quad\forall\uvh,\,\vvh\in\Wvh(\P).
  \label{eq:velocity:L2:local:bilform}
\end{align}
Here, $\SPm{}:\Wvh(\P)\times\Wvh(\P)\to\Rbb$ can be any computable bilinear form such that there exist two positive
constants~$\widetilde{\sigma}_*$ and~$\widetilde{\sigma}^*$ independent of~$\hE$ satisfying
\[
  \widetilde{\sigma}_*\Vert\vvh\Vert^2_{0,\P}
  \leq\SPm{}(\vvh,\vvh)\leq
  \widetilde{\sigma}^*\Vert\vvh\Vert^2_{0,\P}
  \quad\quad\forall\vvh\in\ker\big(\PizvP{1}\big)\cap\Wvh(\P).
\]
A ``standard'' choice for the stabilization term~$\SPm{}(\cdot,\cdot)$
is given by:
\begin{align*}
  \SPm{}(\uvh,\vvh) := \hP^3\sum_i \dofiP(\uvh)\,\dofiP(\vvh),
  \qquad \forall \uvh, \,\vvh\in \Wvh(\P),
\end{align*}
where the summation on the index $i$ is again on all the elemental
degrees of freedom and $\dofiP(\cdot)$ is the same functional used in
the definition of the stabilization term $\SPa{}(\cdot,\cdot)$.
The summation term in $\SPm{}(\cdot,\cdot)$ is multiplied by $\hP^3$
to have a consistent scaling for both terms of $\mshP(\cdot,\cdot)$ on the right-hand side of~\eqref{eq:velocity:L2:local:bilform}
with respect to the element size.
The stability bounds above are shown in~\cite{Beirao-Mascotto-Meng:2022}.

The local bilinear form $\mshP(\cdot,\cdot)$ satisfies the two
fundamental properties of consistency and stability:
\begin{itemize}
\item \textbf{consistency}: for all $\qv\in\big[\PS{1}(\P)\big]^3$ and
  $\wvh\in\Wvh(\P)$,
  \begin{align}
    \mshP(\qv,\wvh) = (\qv,\wvh)_{0,\P};
    \label{eq:velocity:L2:consistency}
  \end{align}
\item \textbf{stability}: for all $\wvh\in\Wvh(\P)$,
  \begin{align}
    \mu_*\Vert\wvh\Vert^2_{0,\P}
    \leq\mshP(\wvh,\wvh)\leq
    \mu^*\Vert\wvh\Vert^2_{0,\P},
    \label{eq:velocity:L2:stability}
  \end{align}
  with $\mu_*=\min(1,\widetilde{\sigma}_*)$ and
  $\mu^*=\max(1,\widetilde{\sigma}^*)$.
\end{itemize}
Finally, we define the global inner product
$\msh:\Wvh\times\Wvh\to\Rbb$ that we use in the virtual element
formulation of the MHD model:
\begin{align*}
  \msh(\uvh,\vvh) := \sum_{\P\in\Th} \mshP(\uvh,\vvh).
\end{align*}
Associated with~$\msh(\cdot,\cdot)$, we define the corresponding discrete norm
\[
\norm{\vvh}{\msh}^2:= \msh(\vvh,\vvh).
\]

\subsubsection{Interpolation in~$\Wvh$ and a stability result} \label{subsubsection:interpolation-velocity}
We define the (energy) interpolant $\uvI\in\Wvh$ of a sufficiently smooth vector-valued field $\uv$ as the unique function in~$\Wvh$ sharing the same degrees of freedom of~$\uv$.
We have the following local and global interpolation property;
see~\cite{Beirao-Mascotto-Meng:2022}.

\begin{lem} \label{lemma:uvI:intp}
Let~$\uv\in\big[\HS{2}(\Omega)\big]^3$ and $\uvI\in\Wvh$ be its degrees of freedom interpolant.
Then, a real, positive constant $\Cintp$ independent of $\hh$ exists such that
\[
\norm{\uv-\uvI}{0,\E} + \hE\snorm{\uv-\uvI}{1,\E} \leq \Cintp \hE^2 \snorm{\uv}{2,\E} \quad \forall \E \in \Th;
\quad
\norm{\uv-\uvI}{} + \hh \norm{\uv-\uvI}{1} \leq \Cintp \hh^2 \snorm{\uv}{2}.
\]
The constant $\Cintp$ depends on the mesh parameter~$\gamma$.
\end{lem}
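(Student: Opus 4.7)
The plan is to follow the classical Bramble--Hilbert/Deny--Lions strategy for virtual element interpolation, specialized to the enhanced lowest-order space $\Wvh(\E)$. The two ingredients needed are (a) a local stability estimate for the interpolation operator $\uv\mapsto\uvI$, and (b) polynomial preservation, namely that $\qv_{\INTP}=\qv$ for every $\qv\in[\PS{1}(\E)]^3$.

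First I would check that $\uvI$ is well-defined when $\uv\in[\HS{2}(\E)]^3$: vertex evaluations are controlled by the three-dimensional Sobolev embedding $\HS{2}(\E)\hookrightarrow\CS{0}(\overline{\E})$, while the scaled face moments $\tfrac{1}{\mF}\int_{\F}\uv\cdot\nvF$ are controlled by the trace map $\HS{1}(\E)\to\LS{2}(\partial\E)$. A scaling to a reference element of unit diameter, combined with the mesh regularity assumptions $\ASSUM{M}{1}$--$\ASSUM{M}{2}$, bounds each degree of freedom $\dofiP(\uv)$ by a suitably scaled local Sobolev norm of $\uv$. I would then establish the local stability estimate
\[
\norm{\uvI}{0,\E} + \hE\snorm{\uvI}{1,\E} \leq \Cs\,\bigl(\norm{\uv}{0,\E} + \hE\snorm{\uv}{1,\E} + \hE^2\snorm{\uv}{2,\E}\bigr)
\]
by transporting to the reference element, using the finite-dimensional equivalence between any two norms on $\Wvh(\E)$ there, and mapping back. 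Polynomial preservation is immediate from the unisolvence of the DoFs on $\Wvh(\E)$, since $[\PS{1}(\E)]^3\subset\Wvh(\E)$, so any linear polynomial coincides with its own interpolant.

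With stability and polynomial preservation in hand, the local estimate follows from the usual trick: pick a Deny--Lions polynomial approximant $\qv\in[\PS{1}(\E)]^3$ of $\uv$ satisfying $\norm{\uv-\qv}{0,\E}+\hE\snorm{\uv-\qv}{1,\E}+\hE^2\snorm{\uv-\qv}{2,\E}\leq \Cappr\,\hE^2\snorm{\uv}{2,\E}$, write $\uv-\uvI=(\uv-\qv)-(\uv-\qv)_{\INTP}$, and apply the triangle inequality together with the stability estimate to the second term. The global bound then follows by squaring, summing over $\E\in\Th$ using $\hE\leq\hh$, and exploiting the $\HS{1}$-conformity built into the definition of $\Wvh$.

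The main obstacle I anticipate is verifying the local stability estimate with constants depending only on the shape-regularity parameter $\gamma$: the enhancement conditions in the definition of $\Wvh(\E)$ couple the tangential and normal face moments to the projectors $\PiboldnablaF$ and $\PinablaF$, so some careful bookkeeping is needed on the reference element to track how those constraints propagate through the norm-equivalence argument. Beyond this point, the reasoning is a straightforward three-dimensional, $[\HS{2}]^3$-valued counterpart of the interpolation analysis already carried out in~\cite{BeiraodaVeiga-Lovadina-Vacca:2018,BeiraodaVeiga-Dassi-Vacca:2020}, which explains why the authors are comfortable omitting the full details.
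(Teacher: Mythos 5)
The paper offers no proof of this lemma: it is stated with the explicit remark that the argument would be ``a fairly standard modification'' of the results in~\cite{BeiraodaVeiga-Lovadina-Vacca:2018}, so the only meaningful comparison is with the strategy of that reference. Your overall skeleton --- well-posedness of the DoFs via $\HS{2}(\E)\hookrightarrow\CS{0}(\overline{\E})$ and the trace theorem, reproduction of $[\PS{1}(\E)]^3\subset\Wvh(\E)$ by unisolvence, a local stability bound for $\uv\mapsto\uvI$, and then the Deny--Lions/triangle-inequality step followed by summation over the elements --- is the right one, and the first two ingredients are handled correctly.

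The gap is in the mechanism you propose for the stability estimate. ``Transporting to a reference element and using the finite-dimensional equivalence of norms on $\Wvh(\hat\E)$'' is a simplicial/hexahedral FEM argument: for a family of general polyhedra there is no single reference element, and even after rescaling to unit diameter the space $\Wvh(\hat\E)$ --- whose functions are defined only implicitly as solutions of a local Stokes-type problem --- changes with the shape of $\hat\E$, so the norm-equivalence constants a priori depend on the individual element rather than only on $\gamma$. This is precisely where the proof has actual content. The way~\cite{BeiraodaVeiga-Lovadina-Vacca:2018} and the related VEM interpolation literature handle it is to exploit the PDE characterization of the space instead of a reference-element map: the trace of $\uvI$ on each face is a standard two-dimensional (enhanced) VEM interpolant whose stability and approximation properties on $\gamma$-shape-regular polygons are known; inside $\E$, $\uvI$ solves a Stokes-type boundary value problem with prescribed trace and prescribed (constant) divergence, so $\snorm{\uvI}{1,\E}$ is controlled by the $\HS{1/2}(\partial\E)$ norm of the boundary datum and by $\norm{\div\uvI}{0,\E}$ through the uniform well-posedness of that local problem, and the $\LS{2}$ bound follows by a scaled Poincar\'e argument. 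Your closing remark correctly flags the enhancement constraints as requiring bookkeeping, but the more fundamental issue is that the reference-element norm-equivalence argument is simply unavailable on polytopal meshes; without replacing it by the local-PDE stability argument (or an equivalent estimate proved uniformly over the class of $\gamma$-regular shapes), the proof is incomplete at its central step.
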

An important consequence of the definition of the interpolant~$\uvI$
is that, if~$\uv$ is divergence free, then (see also
Remark~\ref{remark:inf-sup})
\begin{equation} \label{uI-div}
    \bs(\uvI,\qsh) = \bs(\uv,\qsh) = 0 \qquad \forall \qsh \in \Qsh \quad \Longrightarrow \quad \div \uvI =0.
\end{equation}
We conclude this section with a technical lemma that will be useful in Section~\ref{section:VEM:analysis}.
\begin{lem} \label{lemma:chi:error}
We have a stability bound for the $\LS{\infty}$-norm of the elemental $L^2$ polynomial projection of~$\uvI$: for all~$\ell \in \Nbb$,
\[
\quad\norm{\Pizv{\ell} \uvI}{\LS{\infty}}
\leq \Cinv \CD \big(\CI \snorm{\uv}{W^{1,3}} + \norm{\uv}{L^{\infty}}).
\]
\end{lem}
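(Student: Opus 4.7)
The plan is to work element by element and then take a supremum. Fix $\E\in\Th$ and concentrate on bounding $\norm{\Pizv{\ell}\uvI}{\LS{\infty}(\E)}$; the global estimate follows immediately by passing to the supremum over $\E\in\Th$. Since $\restrict{(\Pizv{\ell}\uvI)}{\E}$ is a vector polynomial of fixed (low) degree on a shape-regular polyhedron, a standard inverse estimate combined with the $L^3$-stability of the $L^2$-orthogonal polynomial projection on shape-regular elements gives
\[
\norm{\Pizv{\ell}\uvI}{\LS{\infty}(\E)} \leq \Cinv\,\hE^{-1}\,\norm{\Pizv{\ell}\uvI}{\LS{3}(\E)} \leq \Cinv\,\CD\,\hE^{-1}\,\norm{\uvI}{\LS{3}(\E)}.
\]
The second inequality follows because all norms on a fixed finite-dimensional polynomial space are equivalent with a constant $\CD$ that is uniform in $\hh$, once one pulls back to a reference element using $\ASSUM{M}{1}$--$\ASSUM{M}{2}$.

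Next, I would split $\uvI = (\uvI-\uv) + \uv$. For the interpolation error I would invoke the $W^{1,3}$/$L^3$ analogue of Lemma~\ref{lemma:uvI:intp}, i.e., $\norm{\uvI-\uv}{\LS{3}(\E)} \leq \CI\,\hE\,\snorm{\uv}{W^{1,3}(\E)}$; for $\uv$ itself, H\"older's inequality together with $\vert\E\vert^{1/3}\leq C\,\hE$ produces $\norm{\uv}{\LS{3}(\E)} \leq C\,\hE\,\norm{\uv}{\LS{\infty}(\E)}$. Substituting into the previous display, the factors $\hE^{-1}$ and $\hE$ cancel and, after absorbing the universal H\"older constant into $\CD$, I obtain
\[
\norm{\Pizv{\ell}\uvI}{\LS{\infty}(\E)} \leq \Cinv\,\CD\,\bigl(\CI\,\snorm{\uv}{W^{1,3}(\E)} + \norm{\uv}{\LS{\infty}(\E)}\bigr).
\]
Taking the supremum over $\E\in\Th$ and bounding the local norms and seminorms on the right by the corresponding global ones concludes the argument.

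The only step that is not completely routine is the $W^{1,3}$/$L^3$ interpolation estimate used above, since Lemma~\ref{lemma:uvI:intp} is phrased only in the $L^2$/$H^1$ scale. I do not expect any conceptual obstruction: on a shape-regular polyhedron one can redo the standard virtual element interpolation proof --- a Bramble--Hilbert argument together with the $L^p$-stability of the polynomial projectors entering the enhancement of $\Wvh(\P)$ and a local Poincar\'e-type inequality --- in the $W^{1,3}$/$L^3$ scale in place of the $H^1$/$L^2$ one. The only care needed is that the degrees of freedom of $\Wvh$ (vertex values and scaled face moments) act as continuous functionals on the regularity class under consideration, which is ensured by the smoothness of $\uv$ implicit in the finiteness of the right-hand side of the stated bound.
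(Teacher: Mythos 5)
Your argument is correct in outline and reaches the stated bound, but it takes a genuinely different route from the paper. The paper passes from $\LS{\infty}$ directly to $\LS{2}$: it splits $\Pizv{\ell}\uvI=\Pizv{\ell}(\uvI-\uv)+\Pizv{\ell}\uv$, applies the polynomial inverse estimate $\norm{\cdot}{\LS{\infty}(\P)}\leq \Cinv \hP^{-3/2}\norm{\cdot}{0,\P}$ together with the $\LS{2}$-contractivity of $\Pizv{\ell}$, invokes the first-order $\LS{2}$/$\HS{1}$ interpolation bound $\norm{\uv-\uvI}{0,\P}\leq \CI\hP\snorm{\uv}{1,\P}$ (the order-one variant of Lemma~\ref{lemma:uvI:intp}), and only then uses H\"older on the element ($\snorm{\uv}{1,\P}\lesssim \hP^{1/2}\snorm{\uv}{\WS{1,3}(\P)}$ and $\norm{\uv}{0,\P}\lesssim \hP^{3/2}\norm{\uv}{\LS{\infty}(\P)}$) so that the powers of $\hP$ cancel. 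You instead route everything through the $\LS{3}$ scale, which forces you to assume an $\LS{3}$/$\WS{1,3}$ interpolation estimate for $\uvI$ that is not in the paper. That step is indeed the weak point: redoing the virtual element interpolation theory in $\WS{1,p}$, as you propose, is considerably less routine than you suggest (the interpolant is defined through elliptic problems on polyhedra, and $\WS{1,p}$ stability for $p\neq 2$ is delicate). Fortunately your flagged estimate can be obtained without any of that: a scaled Sobolev embedding on $\E$ gives $\norm{w}{\LS{3}(\E)}\leq \CD\bigl(\hE^{-1/2}\norm{w}{0,\E}+\hE^{1/2}\snorm{w}{1,\E}\bigr)$, and inserting $w=\uv-\uvI$ together with the $\LS{2}$/$\HS{1}$ bounds of Lemma~\ref{lemma:uvI:intp} and one H\"older step yields $\norm{\uv-\uvI}{\LS{3}(\E)}\leq \Cs\,\CI\,\hE\,\snorm{\uv}{\WS{1,3}(\E)}$. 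With that patch your proof closes; the paper's version is simply more economical because it applies H\"older only to norms of the smooth function $\uv$ and never needs $\LS{p}$-theory for the virtual interpolant itself.
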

\begin{proof}
On each element~$\E \in \Th$, standard manipulations imply
\begin{align*}
\norm{\Pizv{\ell} \uvI}{\LS{\infty}(\P)}
& \leq \norm{\Pizv{\ell}(\uv-\uvI)}{\LS{\infty}(\P)} + \norm{\Pizv{\ell}\uv}{\LS{\infty}(\P)}
\leq \Cinv \hP^{-\frac32}\big(\norm{\uv-\uvI}{0,\P} + \norm{\uv}{0,\P}\big)\nonumber\\[0.5em]
& \leq \Cinv \big(\CI\hP^{-\frac12} \snorm{\uv}{1,\P} + \hP^{-\frac32} \norm{\uv}{0,\P} \Big)
  \le  \Cinv \big(\CI \CD \snorm{\uv}{\WS{1,3}(\P)} + \CD \norm{\uv}{\LS{\infty}(\P)} \big).
\end{align*}
Taking the maximum over all elements gives the assertion.
\end{proof}

\subsection{The discrete trilinear forms}
We introduce the local continuous and discrete trilinear forms on the element~$\E$:
\begin{align*}
  \cE(\vv;\uv,\wv)     &:= \int_{\P} (\nabla \uv) \vv\cdot\wv \qquad \forall\uv,\vv,\wv\in\big[\HS{1}(\P)\big]^3, \\
  \chE(\vvh;\uvh,\wvh) &:= \int_{\P} (\PizvP{0}\nabla\uvh)\PizvP{1}\vvh\cdot\PizvP{1}\wvh \qquad \forall\uvh,\vvh,\wvh\in\Wvh(\P).
\end{align*}
We also define their skew-symmetric counterparts
\begin{align}
  \ctildeE(\vv;\uv,\wv)     &:= \frac12 \left( \cE(\vv;\uv,\wv)     - \cE(\vv;\wv,\uv)     \right) \qquad \forall \uv,\vv,\wv\in\big[\HS{1}(\P)\big]^3,\label{eq:ctildeE:def}\\
  \ctildenE(\vvh;\uvh,\wvh) &:= \frac12 \left( \cnE(\vvh;\uvh,\wvh) - \cnE(\vvh;\wvh,\uvh) \right) \qquad \forall \uvh,\vvh,\wvh\in\Wvh(\P),\label{eq:ctildeEh:def}
\end{align}
and the associated global skew-symmetric trilinear forms
\begin{align*}
  \ctilde(\vv;\uv,\wv)     &= \sum_{\P \in \Th} \ctildeE(\vv;\uv,\wv)     \qquad\forall \uv,\vv,\wv\in\big[\HS{1}(\Omega)\big]^3,\\
  \ctilden(\vvh;\uvh,\wvh) &= \sum_{\P \in \Th} \ctildenE(\vvh;\uvh,\wvh) \qquad\forall \uvh,\vvh,\wvh\in\Wvh.
\end{align*}
If~$\vv \in [H^1_0(\Omega)]^3$ is divergence free, then
\[
\ctilde(\vv; \uv,\wv) = c(\vv; \uv,\wv).
\]
In the remainder of the section, we discuss three properties of these trilinear forms.
For the sake of exposition, we consider the extension of the virtual
element bilinear and trilinear forms to $\big[\HS{1}(\P)\big]^3$ and
$\big[\HS{1}(\Omega)\big]^3$.
\medskip

The first property is the continuity of the trilinear form, which can be proved as in~\cite[Proposition~$3.3$]{BeiraodaVeiga-Lovadina-Vacca:2018}.
\begin{lem} \label{lemma:continuity-trilinear}
The following continuity bound is valid:
\begin{align*}
    \ABS{ \ctilden(\vvh; \uvh, \wvh) }
    \lesssim 
    \snorm{\uvh}{1}\,\snorm{\vvh}{1}\,\snorm{\wvh}{1},
\end{align*}
where the hidden constant is independent of~$\hh$.
\end{lem}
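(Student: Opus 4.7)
The plan is to work element by element, control each of the three factors inside the elemental integral by an appropriate Lebesgue norm via H\"older's inequality, and then sum, recombining with the help of Sobolev embedding (valid in 3D). Since $\ctilden$ is built from $\cn$ by swapping the roles of $\uvh$ and $\wvh$, it suffices to establish the bound for $\cn$ and the skew part follows at once.

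Fix an element $\P$. By definition,
\[
\chE(\vvh;\uvh,\wvh) = \int_{\P} (\PizvP{0}\nabla\uvh)\,\PizvP{1}\vvh\cdot\PizvP{1}\wvh .
\]
First I would apply H\"older's inequality with the triple $(2,3,6)$ (whose reciprocals sum to $1$) to get
\[
\ABS{\chE(\vvh;\uvh,\wvh)}
\le \norm{\PizvP{0}\nabla\uvh}{\LS{2}(\P)}\,
    \norm{\PizvP{1}\vvh}{\LS{3}(\P)}\,
    \norm{\PizvP{1}\wvh}{\LS{6}(\P)}.
\]
The first factor is bounded by $\snorm{\uvh}{1,\P}$ using that $\PizvP{0}$ is an $\LS{2}$-projection. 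For the remaining two factors I would invoke the $\LS{p}$-stability of the polynomial $\LS{2}$-projection on shape-regular polyhedra, that is $\norm{\PizvP{\ell}\zv}{\LS{p}(\P)}\lesssim\norm{\zv}{\LS{p}(\P)}$ for $p\in[1,\infty]$, with a hidden constant depending only on $\gamma$ (proved via a local inverse estimate combined with a polynomial approximation bound, exactly the ingredients already encoded in the constants $\Cinv$ and $\Cappr$). This gives
\[
\ABS{\chE(\vvh;\uvh,\wvh)}
\lesssim \snorm{\uvh}{1,\P}\,\norm{\vvh}{\LS{3}(\P)}\,\norm{\wvh}{\LS{6}(\P)}.
\]

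Next I would sum over $\P\in\Th$ and apply H\"older again, but now globally on $\Omega$, with the same exponent triple:
\[
\ABS{\ch(\vvh;\uvh,\wvh)}
\lesssim \snorm{\uvh}{1}\,\norm{\vvh}{\LS{3}(\Omega)}\,\norm{\wvh}{\LS{6}(\Omega)}.
\]
Since $\Wvh\subset\bigl[\HSzr{1}(\Omega)\bigr]^{3}$ and $\Omega\subset\Rbb^{3}$, the continuous Sobolev embeddings $\HS{1}(\Omega)\hookrightarrow\LS{6}(\Omega)$ and $\HS{1}(\Omega)\hookrightarrow\LS{3}(\Omega)$, together with the Poincar\'e inequality for functions in $\HSzr{1}(\Omega)$, yield $\norm{\vvh}{\LS{3}(\Omega)}\lesssim\snorm{\vvh}{1}$ and $\norm{\wvh}{\LS{6}(\Omega)}\lesssim\snorm{\wvh}{1}$ with constants depending only on $\Omega$ (i.e.\ on $\CSob$ and $\CP$). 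Combining these estimates gives the asserted continuity of $\ch(\cdot;\cdot,\cdot)$, and taking the skew-symmetric combination preserves the same bound with a factor $1/2$.

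The main technical point I expect to be the most delicate is the $\LS{p}$-stability of $\PizvP{1}$ for $p\neq 2$. Unlike in the classical FEM case, here $\P$ is a general shape-regular polyhedron, so the bound cannot be obtained by pulling back to a reference element. The cleanest argument is: write $\PizvP{1}\zv = \PizvP{1}(\zv - \qv) + \qv$ for an arbitrary $\qv\in\bigl[\PS{1}(\P)\bigr]^{3}$, estimate $\norm{\PizvP{1}(\zv-\qv)}{\LS{p}(\P)}$ by a polynomial inverse estimate $\lesssim\hP^{3(1/p-1/2)}\norm{\PizvP{1}(\zv-\qv)}{\LS{2}(\P)} \le \hP^{3(1/p-1/2)}\norm{\zv-\qv}{\LS{2}(\P)}$, and then optimize over $\qv$ using a scaled polynomial approximation estimate in $\LS{p}$, whose constants are uniform in $\hP$ thanks to $\ASSUM{M}{1}$--$\ASSUM{M}{2}$. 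Once this auxiliary stability is granted, the rest of the proof is a direct H\"older/Sobolev computation and is essentially the one already carried out in \cite{BeiraodaVeiga-Lovadina-Vacca:2018} for the 2D Navier--Stokes case.
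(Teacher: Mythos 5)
Your argument is correct and is essentially the proof the paper has in mind: the paper itself omits the proof and defers to Proposition~3.3 of the cited Navier--Stokes reference, whose argument is exactly this combination of H\"older's inequality, $\LS{p}$-stability of the polynomial $\LS{2}$-projections on shape-regular polytopes (via inverse estimates, which for $p\ge 2$ follows even without the optimization over $\qv$), and the Sobolev embeddings $\HS{1}(\Omega)\hookrightarrow\LS{3}(\Omega),\LS{6}(\Omega)$. The only cosmetic difference is the exponent triple ($(2,3,6)$ here versus the equally valid $(2,4,4)$ used elsewhere in the paper), which is immaterial.
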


%%\subsection{The second property of the discrete trilinear form}
\medskip
The second property is the measure of the variational crime
perpetrated in the discretization of the trilinear form.
The proof is a modification of the proof
of~\cite[Lemma~$4.3$]{BeiraodaVeiga-Lovadina-Vacca:2018}.

\begin{lem} \label{lemma:property-trilinear2}
Given~$\wv\in\big[\HSzr{1}(\Omega)\big]^3$ and $\vv\in\big[\HSzr{1}(\Omega)\big]^3\cap\big[\HS{2}(\Omega)\big]^3$,
we have the following bound
\begin{align*}
\ABS{ \cst(\vv;\vv,\wv) - \cht(\vv;\vv,\wv) }  \le \CC\hh \norm{\vv}{2}^2 \, \snorm{\wv}{1},
\end{align*}
where the constant~$\CC$ depends only on the constant of inverse estimates, approximation bounds, and Sobolev embedding inequalities, but is independent of~$\hE$.
\end{lem}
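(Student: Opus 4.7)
The plan is to reduce to an element-by-element analysis. By the definition of the skew-symmetrization,
\[
\cst(\vv;\vv,\wv)-\cht(\vv;\vv,\wv) = \tfrac12\sum_{\P\in\Th}\Big[(\cE-\chE)(\vv;\vv,\wv)-(\cE-\chE)(\vv;\wv,\vv)\Big],
\]
so I would bound the two local differences separately and then sum over $\P\in\Th$ via Cauchy--Schwarz. For the first local difference, I would insert one projection at a time to write $(\cE-\chE)(\vv;\vv,\wv)=T_1+T_2+T_3$, where
\[
T_1 = \int_\P(\nabla\vv-\PizvP{0}\nabla\vv)\,\vv\cdot\wv,\quad
T_2 = \int_\P(\PizvP{0}\nabla\vv)\,(\vv-\PizvP{1}\vv)\cdot\wv,\quad
T_3 = \int_\P(\PizvP{0}\nabla\vv)\,\PizvP{1}\vv\cdot(\wv-\PizvP{1}\wv).
\]

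The term $T_3$ vanishes since $(\PizvP{0}\nabla\vv)\,\PizvP{1}\vv\in[\PS{1}(\P)]^3$ is $\LS{2}(\P)$-orthogonal to $\wv-\PizvP{1}\wv$. For $T_1$, I would apply H\"older ($\LS{2}\cdot\LS{\infty}\cdot\LS{2}$) together with the polynomial approximation estimate $\|\nabla\vv-\PizvP{0}\nabla\vv\|_{0,\P}\le\Cappr h_\P|\vv|_{2,\P}$, obtaining $|T_1|\lesssim h_\P|\vv|_{2,\P}\|\vv\|_{\LS{\infty}(\P)}\|\wv\|_{0,\P}$. For $T_2$, exploiting that $\PizvP{0}\nabla\vv$ is constant and that $\vv-\PizvP{1}\vv$ is $\LS{2}$-orthogonal to $[\PS{0}(\P)]^3$, one rewrites $T_2=\int_\P(\PizvP{0}\nabla\vv)(\vv-\PizvP{1}\vv)\cdot(\wv-\PizvP{0}\wv)$ and then combines the inverse bound $\|\PizvP{0}\nabla\vv\|_{\LS{\infty}(\P)}\le\Cinv h_\P^{-3/2}|\vv|_{1,\P}$ with the standard approximation estimates for $\vv-\PizvP{1}\vv$ and $\wv-\PizvP{0}\wv$. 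Summing the two contributions with Cauchy--Schwarz across the elements and absorbing the local $\LS{\infty}$ norms through the Sobolev embedding $\|\vv\|_{\LS{\infty}(\Omega)}\le\CSob\|\vv\|_2$ yields a bound of the form $\Cs\hh\|\vv\|_2^2|\wv|_1$ for the first piece.

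For the second local piece $(\cE-\chE)(\vv;\wv,\vv)$, an analogous three-term splitting applies; the $T_3'$-type term again vanishes for the same polynomial-orthogonality reason, and the $T_2'$-type term is handled in the same way as $T_2$ above. The novelty is that in the $T_1'$-type term one cannot extract the needed $h_\P$-factor from $\|\nabla\wv-\PizvP{0}\nabla\wv\|_{0,\P}$ directly, since $\wv$ enjoys only $\HS{1}$-regularity. I would remedy this by using the $\LS{2}(\P)$-orthogonality of $\nabla\wv-\PizvP{0}\nabla\wv$ against $[\PS{0}(\P)]^{3\times3}$ and subtracting, entry-wise, the $\PizP{0}$-projection of the entries of $\vv\otimes\vv$; the missing $h_\P$-factor is then recovered from $\|\vv\otimes\vv-\PizP{0}(\vv\otimes\vv)\|_{0,\P}\lesssim h_\P\|\vv\|_{\LS{\infty}(\P)}|\vv|_{1,\P}$, and the resulting product is again absorbed through the same Sobolev embedding.

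The main technical obstacle of the whole argument is the asymmetric regularity of the two arguments: any $\LS{\infty}$ (or $\WS{1,3}$) norm must fall on $\vv$ via a Sobolev embedding and never on $\wv$, and the element-wise summation must be organized so as to pair $|\vv|_{2,\P}$ with either $|\wv|_{1,\P}$ or $|\vv|_{1,\P}$ in the $\ell^2$-norm, while the remaining local $\LS{\infty}$ factors are bounded uniformly by their global counterparts. The final constant $\CC=\CC(\Cappr,\Cinv,\CSob)$ is thus independent of $\hh$, as claimed.
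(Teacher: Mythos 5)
Your argument is correct and follows essentially the same route as the paper's proof: the same skew-symmetric splitting into the $(\vv;\vv,\wv)$ and $(\vv;\wv,\vv)$ contributions, the same telescoping insertion of projections (your vanishing $T_3$/$T_3'$ terms are exactly the paper's use of the $\LS{2}$-projection property to replace $\PizvP{1}\vv$ by $\vv$), and the same combination of orthogonality, approximation, inverse, and Sobolev-embedding estimates. The only differences are cosmetic: you use $\LS{\infty}$--$\LS{2}$ H\"older pairings (via $\HS{2}\hookrightarrow \LS{\infty}$ and a final Poincar\'e inequality on $\wv$) where the paper uses $\LS{4}$-based ones, and your extra orthogonality trick in the $T_2$-type terms even gains a superfluous half power of $\hh$.
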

\begin{proof}
  A straightforward manipulation of
  relations~\eqref{eq:ctildeE:def}-\eqref{eq:ctildeEh:def} defining
  the skew-symmetric trilinear forms yields
\begin{equation} \label{mu1mu2}
\begin{split}
\ABS{ \ctilde(\vv; \vv, \wv) - \ctilden(\vv; \vv,\wv) }
& \le \frac12\left( \ABS{ c(\vv; \vv, \wv) - \cn(\vv;\vv,\wv) }
                    + \ABS{ c(\vv; \wv, \vv) - \cn(\vv;\wv,\vv) }\right) \\
& = \frac12 \left( \TERM{T}{1} + \TERM{T}{2}  \right).
\end{split}
\end{equation}
We estimate the two terms on the right-hand side separately. We begin with the first one.
Using the definition of the orthogonal projection $\Pizv{1}\vv$, and adding and subtracting $(\nabla\vv)\vv\cdot\Pizv{1}\wv$ to the integral argument, we write
\begin{align*}
\begin{split}
\TERM{T}{1}
& =   \ABS{ \int_{\Omega} (\nabla\vv)\vv \cdot \wv - \int_{\Omega} (\Pizv{0}\nabla\vv)\Pizv{1}\vv \cdot \Pizv{1}\wv }\\[0.5em]
& =   \ABS{ \int_{\Omega} (\nabla\vv)\vv \cdot \wv - \int_{\Omega} (\Pizv{0}\nabla\vv)\vv \cdot \Pizv{1}\wv }\\[0.5em]
& \le \ABS{ \int_{\Omega} (\nabla\vv)\vv \cdot (\wv - \Pizv{1} \wv) } 
      + \ABS{ \int_{\Omega} (\nabla\vv - \Pizv{0}\nabla\vv)\vv\cdot\Pizv{1}\wv}
   =: \TERM{T}{1,1} + \TERM{T}{1,2}.
\end{split}
\end{align*}
Again, we focus on the two terms on the right-hand side separately.
Using the H{\"o}lder inequality, polynomial approximation properties, and the Sobolev embedding theorem gives
\begin{align*}
\begin{split}
\TERM{T}{1,1}
& \le \norm{\nabla\vv}{\LS{4}}\,\norm{\vv}{\LS{4}}\,\norm{\wv - \Pizv{1} \wv}{}
\le \Cappr \hh \norm{\nabla \vv}{\LS{4}}\,\norm{\vv}{\LS{4}} \norm{\wv}{1}
\le \CSob^2 \Cappr \hh \norm{\vv}{1}\,\norm{\vv}{2} \norm{\wv}{1}.
\end{split}
\end{align*}
Next, for each element~$\E \in \Th$,
we apply a polynomial inverse inequality, the H{\"o}lder inequality, and standard manipulations:
\[
\norm{\Pizv{1} \wv}{\LS{4}(\P)}
\le \Cinv \hP^{-\frac34} \norm{\wv}{0,\P} 
\le \Cinv \hP^{-\frac34} \norm{1}{\LS{4}(\P)}\,\norm{\wv}{\LS{4}(\P)}
\le \Cinv \CD \norm{\wv}{\LS{4}(\P)}.
\]
This entails
\begin{align*}
\TERM{T}{1,2}
& \le \sum_{\E\in\Th} \norm{\nabla\vv -\Pizv{0}\nabla\vv}{0,\P}\,\norm{\vv}{\LS{4}(\P)}\,\norm{\Pizv{1}\wv}{\LS{4}(\P)} \nonumber\\[0.5em]
&\le \Cinv \CD \Cappr\hh \sum_{\P\in\Th} \snorm{\vv}{2,\P}\,\norm{\vv}{\LS{4}(\P)}\, \norm{\wv}{\LS{4}(\P)}.
\end{align*}
We apply a sequential $\ell^2-\ell^4-\ell^4$ H{\"o}lder inequality and the Sobolev embedding theorem twice, and deduce
\[
\TERM{T}{1,2}
\le  \Cinv\CD\Cappr \hh\snorm{\vv}{2}\,\norm{\vv}{\LS{4}}\, \norm{\wv}{\LS{4}}
\le  \Cappr\Cinv\CD\CSob^2 \hh \norm{\vv}{2}\, \norm{\vv}{1}\, \norm{\wv}{1}.
\]
This step concludes the derivation of a bound on the term~$\TERM{T}{1}$ in~\eqref{mu1mu2}.

Next, we focus on the term~$\TERM{T}{2}$.
Adding and subtracting $\big(\Pizv{0}\nabla\wv\big)\vv\cdot\vv$ yields
\begin{align*}
\TERM{T}{2}
& =   \ABS{\int_{\Omega} (\nabla\wv)\vv\cdot\vv - \int_{\Omega}(\Pizv{0}\nabla\wv)\vv\cdot\Pizv{1}\vv }\\
& \le \ABS{ \int_{\Omega} (\nabla\wv - \Pizv{0}\nabla\wv)\vv\cdot\vv }
      + \ABS{ \int_{\Omega} (\Pizv{0}\nabla\wv)\vv\cdot(\vv - \Pizv{1}\vv) }
  =: \TERM{T}{2,1} + \TERM{T}{2,2}.
\end{align*}
We derive the bounds on the terms~$\TERM{T}{2,1}$ and~$\TERM{T}{2,2}$ by using the approximation properties of the orthogonal projections, and theoretical tools similar to those used for the bound on~$\TERM{T}{1}$:
\begin{align*}
\TERM{T}{2,1}
& = \ABS{\int_{\Omega} (\nabla\wv) (\vv\cdot\vv - \Pizv{0}\vv\cdot\vv)}
  \le \snorm{\wv}{1}\ \norm{\vv\cdot\vv - \Pizv{0}(\vv\cdot\vv)}{}
  \le \Cappr\hh \snorm{\wv}{1}\ \snorm{\vv\cdot\vv}{1}\\
& \le \Cappr\hh \snorm{\wv}{1}\ \norm{\vv}{\WS{1,4}}^2 
  \le \Cappr\CSob^2\hh  \norm{\vv}{2}^2 \snorm{\wv}{1}.
\end{align*}
On the other hand, given~$\vv_\pi$ the best piecewise linear approximant in~$L^4$ of~$\vv$ over~$\Th$, we also have
\begin{align*}
\TERM{T}{2,2}
& \le \sum_{\E \in \Th} \snorm{\wv}{1,\P}\,\norm{\vv}{\LS{4}(\P)}\,\norm{\vv - \Pizv{1}\vv}{\LS{4}(\P)}\\
& \le \sum_{\E \in \Th} \snorm{\wv}{1,\P}\,\norm{\vv}{\LS{4}(\P)} \left( \norm{\vv-\vv_\pi}{\LS{4}(\P)} + \norm{\vv_\pi - \Pizv{1}\vv}{\LS{4}(\P)} \right).
\end{align*}
Observe that
\begin{align*}
\norm{\vv_{\pi}-\PizvP{1}\vv}{\LS{4}(\P)}
& \le \Cinv \hP^{-\frac34} (\norm{\vv-\vv_\pi}{0,\P} + \norm{\vv-\PizvP{1}\vv}{0,\P})\\
& \le 2 \Cinv \hP^{-\frac34} \norm{\vv-\vv_\pi}{0,\P}
  \le 2 \Cinv \CD \norm{\vv-\vv_\pi}{\LS{4}(\P)} \\
& \le 2 \Cinv \CD \Cappr \hP \snorm{\vv}{\WS{1,4}(\P)} .
\end{align*}
Inserting this bound above and using polynomial approximation estimates yield
\[
\TERM{T}{2,2} \le (1 + 2 \Cinv \CD) \Cappr \hh 
\sum_{\E \in \Th} \snorm{\wv}{1,\P} \norm{\vv}{\LS{4}(\P)} \norm{\vv}{\WS{1,4}(\P)}.
\]
We apply a sequential $\ell^2-\ell^4-\ell^4$ H{\"o}lder inequality and the Sobolev embedding theorem, and deduce
\[
\TERM{T}{2,2}
\le (1 + 2 \Cinv \CD) \Cappr \hh \snorm{\wv}{1} \norm{\vv}{\LS{4}} \norm{\vv}{\WS{1,4}}
\le (1 + 2 \Cinv \CD) \Cappr \CSob^2 \hh \norm{\vv}{1}  \norm{\vv}{2} \norm{\wv}{1}.
\]
Collecting all the bounds together yields the assertion.
\end{proof}

\medskip
The last property measures the distance between the continuous and the
discrete solutions through the discrete trilinear form.
The proof differs from that of~\cite[Lemma~$4.4$]{BeiraodaVeiga-Lovadina-Vacca:2018},
since here we are interested in the analysis of the time-dependent case.
For this reason, we also use some techniques from~\cite[Lemma~$4.1$]{Schroeder-Lube:2017}.
Further, we require some extra regularity on the exact velocity solution~$\uv$ to~\eqref{eq:continuous:problem-strong}.

\begin{lem} \label{lemma:property-trilinear3}
Let~$\uv \in W^{2,3}(\Omega)$ and~$\evhu := \uvh- \uvI$, where~$\uvI$ is the degrees of freedom interpolant of~$\uv$; see Lemma~\ref{lemma:uvI:intp}.
Then, for every positive~$\varepsilon$,
there exist two positive constants~$C_1$ and~$C_2$ independent of~$\hh$ such that
\begin{align*}
& \ABS{ \cht(\uv;\uv,\evhu) - \cht(\uvh;\uvh,\evhu) } 
  \leq  \varepsilon \snorm{\evhu}{1}^2
  + \left(1+\frac{1}{\varepsilon}\right)\Cs_1 R_1(\uv)  \norm{\evhu}{}^2
  + \left(1+\frac{1}{\varepsilon}\right)\Cs_2 R_2(\uv)  \hh^2  ,
\end{align*}
where
\[
R_1(\uv):= \norm{\uv}{\WS{2,3}}^2 + \norm{\uv}{\WS{2,3}} + \norm{\uv}{\WS{1,\infty}} + 1,
\qquad
R_2(\uv):= \norm{\uv}{\WS{2,3}}^4 + \norm{\uv}{\WS{2,3}}^2 \snorm{\uv}{\WS{1,\infty}}^2.
\]
\end{lem}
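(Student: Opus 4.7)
The plan is to exploit the skew-symmetry of $\cht$ together with a systematic expansion of both $\uv$ and $\uvh$ around the interpolant $\uvI$. Setting $\eta:=\uv-\uvI$, so that $\uv=\uvI+\eta$ and $\uvh=\uvI+\evhu$, I would expand each trilinear term by trilinearity. The key cancellation $\cht(\vv;\wv,\wv)=0$, immediate from~\eqref{eq:ctildeEh:def}, annihilates the pieces $\cht(\uvI;\evhu,\evhu)$ and $\cht(\evhu;\evhu,\evhu)$ produced by expanding $\cht(\uvh;\uvh,\evhu)$. A direct calculation then yields the starting identity
\begin{equation*}
\cht(\uv;\uv,\evhu)-\cht(\uvh;\uvh,\evhu) = \cht(\uvI;\eta,\evhu) + \cht(\eta;\uvI,\evhu) + \cht(\eta;\eta,\evhu) - \cht(\evhu;\uvI,\evhu),
\end{equation*}
and I would estimate each of its four summands separately.

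The machinery for each summand consists of the definitions~\eqref{eq:ctildeE:def}--\eqref{eq:ctildeEh:def}, H\"older's inequality in an $\LS{\infty}$--$\LS{2}$--$\LS{2}$ distribution tailored to the slot carrying the gradient, the interpolation bounds of Lemma~\ref{lemma:uvI:intp}, the Sobolev embedding $\WS{2,3}(\Omega)\hookrightarrow\LS{\infty}(\Omega)$ valid in three dimensions, and Lemma~\ref{lemma:chi:error} for $\norm{\Pizv{1}\uvI}{\LS{\infty}}$. The summands in which $\PizvP{0}\nabla\uvI$ occupies the $\LS{\infty}$ slot demand an auxiliary companion to Lemma~\ref{lemma:chi:error}: splitting $\PizvP{0}\nabla\uvI=\PizvP{0}\nabla\uv+\PizvP{0}\nabla(\uvI-\uv)$, bounding the first summand by $\norm{\uv}{\WS{1,\infty}(\P)}$ and controlling the constant $\PizvP{0}\nabla(\uv-\uvI)$ via an inverse inequality combined with Lemma~\ref{lemma:uvI:intp} and the local H\"older relation $\snorm{\uv}{2,\P}\leq \Cs h_\P^{1/2}\snorm{\uv}{\WS{2,3}(\P)}$ gives
\[
\norm{\PizvP{0}\nabla\uvI}{\LS{\infty}(\P)}\leq \Cs\bigl(\norm{\uv}{\WS{1,\infty}(\P)}+\snorm{\uv}{\WS{2,3}(\P)}\bigr).
\]

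Young's inequality is then applied to every factor of the form $\snorm{\evhu}{1}\cdot(\mathrm{stuff})$, extracting an $\varepsilon\snorm{\evhu}{1}^2$ contribution and leaving a $(4\varepsilon)^{-1}(\mathrm{stuff})^2$ residue; factors of the form $\norm{\evhu}{}\cdot(\mathrm{stuff})$ are handled by a second Young. These residues split into two classes: those in which a $\uv$-norm multiplies $\norm{\evhu}{}^2$ with no spare power of $h$, absorbed into $(1+\varepsilon^{-1})R_1(\uv)\norm{\evhu}{}^2$; and those in which every $\uv$-factor has been paired with an interpolation error, producing an $\hh^2$ prefactor and a fourth-order $\uv$-polynomial that fits inside $(1+\varepsilon^{-1})R_2(\uv)\hh^2$. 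For instance, the delicate term $\cht(\evhu;\uvI,\evhu)$ yields one piece bounded by $\Cs(\norm{\uv}{\WS{1,\infty}}+\norm{\uv}{\WS{2,3}})\norm{\evhu}{}^2$, directly of $R_1$-type, and a second bounded by $\Cs\norm{\uv}{\WS{2,3}}\snorm{\evhu}{1}\norm{\evhu}{}$, handled by Young.

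The main obstacle is the bookkeeping required to match the structure of $R_1$ and $R_2$ exactly. Each of the four terms produces two H\"older sub-estimates, and the purely cubic piece $\cht(\eta;\eta,\evhu)$ in particular calls for the sharp intermediate bound $\norm{\Pizv{1}\eta}{\LS{\infty}(\P)}\leq \Cs\,\hh\snorm{\uv}{\WS{2,3}(\P)}$, obtained by combining an inverse inequality, Lemma~\ref{lemma:uvI:intp} and a local H\"older trade. The joint appearance of $\norm{\uv}{\WS{1,\infty}}$ and $\norm{\uv}{\WS{2,3}}$ in $R_1$ and $R_2$ is forced by the fact that in three dimensions $\WS{2,3}(\Omega)$ embeds into $\WS{1,q}(\Omega)$ for every finite $q$ but not into $\WS{1,\infty}(\Omega)$, so the $\LS{\infty}$-control of $\PizvP{0}\nabla\uvI$ genuinely needs both norms. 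Verifying that every cross-product arising after Young fits into $\norm{\uv}{\WS{2,3}}^4+\norm{\uv}{\WS{2,3}}^2\snorm{\uv}{\WS{1,\infty}}^2$ (for $R_2$) and into the linear-plus-quadratic structure of $R_1$ is tedious but requires no further idea.
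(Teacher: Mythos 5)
Your argument is correct, and it reaches the stated bound by a decomposition that differs from the one in the paper. You expand both $\uv=\uvI+\eta$ and $\uvh=\uvI+\evhu$ by trilinearity and let the skew-symmetry $\cht(\cdot;\wv,\wv)=0$ cancel the two dangerous cubic-in-$\evhu$ contributions up front, arriving at the clean four-term identity $\cht(\uvI;\eta,\evhu)+\cht(\eta;\uvI,\evhu)+\cht(\eta;\eta,\evhu)-\cht(\evhu;\uvI,\evhu)$; the paper instead applies the triangle inequality twice (first perturbing the transport slot, $\TERM{T}{1}$, then the differentiated slot, $\TERM{T}{2}$), and invokes skew-symmetry only once, in the form $\ctildenE(\uvh;\uvh,\evhu)=\ctildenE(\uvh;\uvI,\evhu)$, before expanding each piece. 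The two routes use the same toolbox (H\"older with an $\LS{\infty}$ slot, the inverse/interpolation/H\"older trade $\snorm{\uv}{2,\P}\lesssim \hP^{1/2}\snorm{\uv}{\WS{2,3}(\P)}$, Lemma~\ref{lemma:chi:error}, and Young), but the paper keeps the exact field $\uv$ in several bounded slots, so it can use $\norm{\uv}{\LS{\infty}}$ and $\snorm{\uv}{\WS{1,\infty}}$ directly, whereas your version systematically puts $\uvI$ there, which is why you need the auxiliary estimate $\norm{\PizvP{0}\nabla\uvI}{\LS{\infty}(\P)}\lesssim \norm{\uv}{\WS{1,\infty}(\P)}+\snorm{\uv}{\WS{2,3}(\P)}$ everywhere (the paper proves the nontrivial half of this, the bound on $\PizvP{0}\nabla(\uv-\uvI)$, inside its estimate of $\TERM{T}{2,1}$). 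What your organization buys is a shorter bookkeeping phase — four terms, each already linear in an error quantity, with the cancellations visible at the level of a single algebraic identity — at the price of routing every $\LS{\infty}$ bound through the interpolant; the constants you collect land in the same $R_1$, $R_2$ structure (the additive $+1$ in $R_1$ absorbing the bare $\norm{\evhu}{}^2$ residues of Young, and the $\norm{\uv}{\WS{2,3}}^2\snorm{\uv}{\WS{1,\infty}}^2$ term in $R_2$ arising from $\cht(\eta;\uvI,\evhu)$ exactly as it arises from the paper's $\TERM{T}{1,1}$).
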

\begin{proof}
The triangle inequality implies
\begin{align*}
& \ABS{ \ctilden(\uv;\uv,\evhu) - \ctilden(\uvh;\uvh,\evhu)} \nonumber\\
& \le \ABS{ \ctilden(\uv;\uv,\evhu)  - \ctilden(\uvh;\uv,\evhu)  } +   \ABS{ \ctilden(\uvh;\uv,\evhu) - \ctilden(\uvh;\uvh,\evhu) }
  =: \TERM{T}{1} + \TERM{T}{2}.
\end{align*}
We estimate the two terms on the right-hand side separately and begin by splitting~$\TERM{T}1$ into the sum of two further contributions, due to the definition of the skew symmetric form~$\ctilden(\cdot;\cdot,\cdot)$:
\[
\TERM{T}{1}
\le \frac12 (\ABS{\cn(\uv;\uv,\evhu) - \cn(\uvh;\uv,\evhu)} + \ABS{\cn(\uv;\evhu,\uv) - \cn(\uvh;\evhu,\uv)})
=: \frac12 (\TERM{T}{1,1} + \TERM{T}{1,2}).
\]
Using standard manipulations, we get
\begin{align*}
\TERM{T}{1,1}
& = \ABS{ \int_\Omega(\Pizv{0}\nabla\uv)(\uv-\uvh)\cdot\Pizv{1}\evhu}
    \le \norm{\Pizv{0}\nabla\uv}{\LS{\infty}}\,\norm{\uv - \uvh}{}\,\norm{\Pizv{1}\evhu}{}\\
& \le \snorm{\uv}{\WS{1,\infty}} \left( \norm{\uv - \uvI}{} + \norm{\evhu}{} \right) \norm{\evhu}{}
      \le \CI \hh \snorm{\uv}{\WS{1,\infty}}\,\snorm{\uv}{1}\,\norm{\evhu}{} + \snorm{\uv}{\WS{1,\infty}}\,\norm{\evhu}{}^2\\
& \le \CI \hh^2 \snorm{\uv}{\WS{1,\infty}}^2 \,\norm{\uv}{1}^2 
  + (\snorm{\uv}{\WS{1,\infty}}  +\CI) \norm{\evhu}{}^2\\
& \le \CI \CSob^2 \hh^2 \snorm{\uv}{\WS{1,\infty}}^2\norm{\uv}{\WS{2,3}}^2
  + (\snorm{\uv}{\WS{1,\infty}}  +\CI) \norm{\evhu}{}^2.
\end{align*}
As for the term~$\TERM{T}{1,2}$, we write
\begin{align*}
\TERM{T}{1,2}
& = \ABS{ \int_{\Omega} (\Pizv{0}\nabla\evhu)(\Pizv{1}(\uv - \uvh))\cdot\uv }
    \le  \snorm{\evhu}{1}\,\norm{\uv -\uvh}{}\,\norm{\uv}{\LS{\infty}}\\
&\le \snorm{\evhu}{1}\,\left( \norm{\uv -\uvI}{} + \norm{\evhu}{} \right) \norm{\uv}{\LS{\infty}}
 \le \CI\hh\snorm{\evhu}{1}\,\snorm{\uv}{1}\,\norm{\uv}{\LS{\infty}}
    + \snorm{\evhu}{1}\,\norm{\evhu}{}\,\norm{\uv}{\LS{\infty}}\\
& \le \frac\varepsilon2 \snorm{\evhu}{1}^2 + \frac1\varepsilon \CI^2\hh^2\snorm{\uv}{1}^2\,\norm{\uv}{\LS{\infty}}^2 + \frac1\varepsilon \norm{\uv}{\LS{\infty}}^2\,\norm{\evhu}{}^2\\
& \le \frac\varepsilon2 \snorm{\evhu}{1}^2 + \frac1\varepsilon \CI^2 \CSob^2 \hh^2 \norm{\uv}{\WS{2,3}}^4 + \frac1\varepsilon \norm{\uv}{\LS{\infty}}^2\,\norm{\evhu}{}^2.
\end{align*}
Next, we focus on the term~$\TERM{T}2$. Note that
\begin{equation} \label{note-that}
    \ctildenE(\uvh;\uvh,\evhu) = \ctildenE(\uvh;\uvI,\evhu),
\end{equation}
which is a consequence of the skew symmetry of~$\ctildenE(\cdot;\cdot,\cdot)$ and~\eqref{uI-div}.

We split the term~$\TERM{T}2$ into two contributions, due to the definition of the skew symmetric form~$\ctilden(\cdot;\cdot,\cdot)$. Recalling~\eqref{note-that}, we write
\begin{equation} \label{T21T22}
\TERM{T}2
\le \frac12 \left(\ABS{\cn(\uvh;\uv,\evhu) - \cn(\uvh;\uvI,\evhu) } + \ABS{\cn(\uvh;\evhu,\uv) - \cn(\uvh;\evhu,\uvI)} \right)
=: \frac12( \TERM{T}{2,1} + \TERM{T}{2,2}).
\end{equation}
Using standard manipulations as above, we get
\begin{align*}
\TERM{T}{2,1}
\le \ABS{ \cn(\evhu;\uv-\uvI,\evhu) } + \ABS{\cn(\uvI;\uv-\uvI,\evhu)}
=: \TERM{T}{2,1,1} + \TERM{T}{2,1,2}. 
\end{align*}
Preliminary, for all~$\E \in \taun$, we observe that
\[
\begin{split}
\norm{\Pizv{0} \nabla (\uv-\uvI)}{\LS{\infty}(\P)}
& \le \CD \hP^{-\frac32} \snorm{\uv-\uvI}{1,\P}
\le \CD \CI \hP^{-\frac12} \snorm{\uv}{2,\P}
\le \CD^2 \CI \snorm{\uv}{W^{2,3}(\P)}.
\end{split}
\]
By taking the maximum over all the elements on both sides entails
\[
\norm{\Pizv{0} \nabla (\uv-\uvI)}{\LS{\infty}} \le \CD^2 \CI \snorm{\uv}{W^{2,3}}.
\]
Then, we have
\begin{align*}
\TERM{T}{2,1,1} 
&  \le \int_{\Omega} \ABS{ \Pizv{0}\nabla(\uv-\uvI) }\,\ABS{ \Pizv{1} \evhu }\,\ABS{ \Pizv{1}\evhu }
  \le \norm{ \Pizv{0}\nabla(\uv - \uvI) }{\LS{\infty}}\,\norm{\evhu}{}^2\\
& \le  \CD^2 \CI \snorm{\uv}{W^{2,3}} \,\norm{\evhu}{}^2.
\end{align*}
Next, we apply Lemma~\ref{lemma:chi:error} and write
\[
\norm{ \PizvP{1}\uvI }{\LS{\infty}(\P)}
\le \Cinv\CD \Big(\CI\snorm{\uv}{\WS{1,3}} + \norm{\uv}{\LS{\infty}}  \Big).
\]
By means of the above inequality, we deduce the bound
\begin{align*}
\TERM{T}{2,1,2}
& \le \int_{\Omega} \ABS{ \Pizv{0}\nabla(\uv-\uvI) }\,\ABS{ \Pizv{1}\uvI}\,\ABS{ \evhu }
\le \norm{\Pizv{0} \nabla( \uv- \uvI)}{} \, \norm{\Pizv{1}\uvI}{\LS{\infty}} \, \norm{\evhu}{}\\
& \le \CI \hh \snorm{\uv}{2} \Cinv\CD (\CI+1) (\norm{\uv}{\WS{1,3}}+\norm{\uv}{\LS{\infty}})  \norm{\evhu}{}\\
& \le \CI^2 \Cinv^2 \CD^2 (\CI+1)^2 \hh^2 \snorm{\uv}{2}^2 
(\norm{\uv}{\WS{1,3}}^2+\norm{\uv}{\LS{\infty}}^2) + \frac14\norm{\evhu}{}^2\\
& \le \CI^2 \Cinv^2 \CD^2 (\CI+1)^2 (\CSob^2 +1) \hh^2 
\norm{\uv}{\WS{2,3}}^4 + \frac14\norm{\evhu}{}^2.
\end{align*}
Next, we focus on the term~$\TERM{T}{2,2}$ appearing on the right-hand side of~\eqref{T21T22}:
\begin{align*}
\TERM{T}{2,2}
& \le \ABS{ \int_{\Omega} (\Pizv{0}\nabla\evhu)\uvh \cdot \Pizv{1}(\uv-\uvI) }\\
& \le \ABS{\int_{\Omega} (\Pizv{0}\nabla\evhu) \evhu \cdot\Pizv{1}(\uv -\uvI)}
 + \ABS{ \int_{\Omega} (\Pizv{0}\nabla\evhu)\uvI\cdot\Pizv{1}(\uv-\uvI)}
 =: \TERM{T}{2,2,1} + \TERM{T}{2,2,2}.
\end{align*}
We have the bounds
\begin{align*}
\TERM{T}{2,2,1} 
& \le \sum_{\E \in \Th} \norm{\PizvP{0} \nabla\evhu}{0,\P}\, \norm{\evhu}{0,\P}\,\norm{ \PizvP{1}(\uv - \uvI) }{\LS{\infty}(\P)}\\
& \le \sum_{\E \in \Th} \snorm{\evhu}{1,\P}\, \norm{\evhu}{0,\P}\, \Cinv\hP^{-\frac32}\, \norm{\uv-\uvI}{0,\P}\\
& \le \sum_{\E \in \Th} \snorm{\evhu}{1,\E} \norm{\evhu}{0,\E} \Cinv \CI \hP^{-\frac12} \snorm{\uv}{1,\E}
\le \Cinv \CI \CD \snorm{\uv}{\WS{1,3}} \sum_{\E\in\Th} \snorm{\evhu}{1,\P} \norm{\evhu}{0,\P}\\
& \le \Cinv \CI \CD \snorm{\uv}{\WS{1,3}}  \snorm{\evhu}{1} \norm{\evhu}{}
\le \frac\varepsilon4 \snorm{\evhu}{1}^2 + \frac{1}{\varepsilon} \Cinv^2 \CI^2 \CD^2 \,\snorm{\uv}{\WS{2,3}}^2\,\norm{\evhu}{}^2
\end{align*}
and, with similar computations,
\begin{align*}
\begin{split}
\TERM{T}{2,2,2}
& \le \sum_{\E \in \Th} \snorm{\evhu}{1,\P}\,\norm{\uvI}{0,\P}\,\norm{\PizvP{1}(\uv - \uvI)}{\LS{\infty}(\P)}\\
& \le \sum_{\E \in \Th} \snorm{\evhu}{1,\P}(1+\CI\hP)\norm{\uv}{1,\P}\, \Cinv\CI \hP^{\frac12} \snorm{\uv}{2,\P}\\ 
& \le \sum_{\E \in \Th} \snorm{\evhu}{1,\P} (1+\CI \hP) \norm{\uv}{1,\P}\,\Cinv\CI\CD \hP \snorm{\uv}{\WS{2,3}(\E)} \\
& \le \frac\varepsilon4 \snorm{\evhu}{1}^2 + \frac1\varepsilon (1+\CI\hP)^2\Cinv^2\CI^2 \CD^2\hh^2 \norm{\uv}{1}^2 \, \snorm{\uv}{\WS{2,3}}^2\\
& \le \frac\varepsilon4 \snorm{\evhu}{1}^2 + \frac1\varepsilon (1+\CI\hP)^2\Cinv^2\CI^2 \CD^2\hh^2 \norm{\uv}{\WS{2,3}}^4.
\end{split}
\end{align*}
We collect the bounds on all the terms above and the assertion follows.
\end{proof}

%%%%%%%%%%%%%%%%%%%%%%%%%%%%%%%

\section{Virtual element spaces for the electromagnetic equations}
\label{section:VEM:Maxwell}
  
\subsection{A pivot nodal space}
\label{subsection:nodal-space}

Here, we introduce a nodal virtual element space that will be
instrumental in defining polynomial projectors for edge spaces.
Consider the mesh face~$\F\in\calFh$ and the translated position vector $\xvF=\xv-\bvF$ for all $\xv\in\F$ (we recall that $\bvF$ is the center of~$\F$).
We define the nodal virtual element space on~$\F$ as
\begin{align*}
  \VshNode(\F) :=
  \bigg\{
  \vsh\in\CS{0}(\overline{\F})\,\mid\,\Delta\vsh\in\PS{0}(\F),\;
  \restrict{\vsh}{\e}\in\PS{1}(\e)\,\,\forall\e\in \setEdgeF,\;
  \int_\F\nabla\vsh\cdot\xvF = 0
  \bigg\}.
\end{align*}
Then, for all elements $\P\in\Th$, we define the local nodal space as:
\begin{align*}
  \VshNode(\P) :=
  \Big\{
  \vsh\in\CS{0}(\overline\P) \,\mid\,
  \Delta\vsh=0,\;
  \restrict{\vsh}{\F}\in\VshNode(\F)\,\,\forall\F\in \setFaceP
  \Big\}.
\end{align*}
The virtual element functions in the local space~$\VshNode(\P)$ for all $\P\in\Th$ are uniquely determined by the set of their vertex values over $\partial\P$;
see, e.g.,~\cite{BeiraodaVeiga-Brezzi-Dassi-Marini-Russo:2018} for a proof of their unisolvence.
We introduce the global nodal space
\begin{equation} \label{global-nodal-space}
\VshNode:=\{ \vsh \in \mathcal C^0(\overline \Omega)
             \mid \vsh{}_{|\P} \in \VshNode(\E) \; \forall \P \in \Th\},
\end{equation}
which will be used  in the definitions of the edge interpolant~$\EvII$ in the forthcoming Section~\ref{subsection:exact-sequences}.

%%%%%%%%%%%%%%%%%%%%%%%%%
\subsection{The edge space}
\label{subsection:edge-space}

We define the virtual element edge space on the mesh face
$\F\in\setFace$ as
\begin{align*}
  \VvhEdge(\F) :=
  \bigg\{
  \Evh\in\big[\LS{2}(\F)\big]^2
  \,\mid\,
  & \divF\Evh\in\PS{0}(\F),\;\rotF\Evh\in\PS{0}(\F),\nonumber\\
  & \hspace{1mm}\Evh\cdot\tv_{\e}\in\PS{0}(\e)\,\,\forall\e\in\setEdgeF,\;
  \int_\F\Evh\cdot\xvF = 0
  \bigg\},
\end{align*}
where~$\divF$ and $\rotF$ are defined in~\eqref{eq:2D:diff:ops}, and again $\xvF=\xv-\bv_{\F}$.

The local edge space on an element~$\P\in\Th$ is defined as
\begin{align*}
  \VvhEdge(\P) 
  := \bigg\{
  \Evh\in\big[\LS{2}(\E)\big]^3
  \,\mid\,\,
  &
  \Ev_{\hh,\tauvF}\in\VvhEdge(\F)\,\,\forall\F\in\setFaceP,\,\,
  \Evh\cdot\tv_{\e}\text{~continuous at each edge~$\e$,~}\nonumber\\
  &
  \begin{cases}
    \curl\curl\Evh\in\big[\PS{0}(\P)\big]^3\nonumber\\
    \div\Evh = 0
  \end{cases}
  \hspace{-4mm},
  \nonumber\\
  & \hspace{1mm}
  \int_{\P}\curl\Evh\cdot(\xvE\times\qv) = 0\,\,\forall\qv\in\big[\PS{0}(\P)\big]^3
  \bigg\},
\end{align*}
where $\xvE=\xv-\bvE$ for all~$\xv\in\E$, $\bvE$ is the center of~$\P$, and $\Ev_{\hh,\tauvF}=(\nvF\times\vvh)\times\nvF$.

\medskip
Each element $\Evh\in\VvhEdge(\P)$ is uniquely determined by the set
of constant values $(\Es_{\e})_{\e\in\setEdgeP}$, where
$\Es_{\e}=\Evh\cdot\tv_{\e}$ is the tangential component of $\Evh$
along the elemental edge $\e$.
We take these values as the degrees of freedom of $\Evh$ and refer the
reader to~\cite{BeiraodaVeiga-Brezzi-Dassi-Marini-Russo:2018} for the
proof of their unisolvence in $\VvhEdge(\P)$.
A major consequence of such a space definition is that $\PizvP{0}\Evh$
is computable from the degrees of freedom of $\Evh$ for all
$\Evh\in\VvhEdge(\P)$.

\medskip
We define the global edge space~$\VvhEdge$ by an
$\HS{}(\curl,\Omega)$-conforming coupling of the degrees of freedom,
so that
\begin{equation} \label{global-edge-space}
  \VvhEdge := \Big\{
  \Evh\in\HSzr{}(\curl,\Omega)\,\mid\,\restrict{\Evh}{\P}\in\VvhEdge(\P)\,\,\forall\P\in\Th  \Big\}.
\end{equation}
This definition includes the homogeneous tangential boundary
conditions on~$\partial\Omega$.

\subsubsection{Virtual element inner product}
According
to~\cite{BeiraodaVeiga-Brezzi-Dassi-Marini-Russo:2018,BeiraodaVeiga-Dassi-Manzini-Mascotto:2021},
we equip the virtual element edge space $\VvhEdge$ with an inner
product mimicking the $\LS{2}$ inner product.
Notably, we first introduce the local bilinear form
\begin{multline}
  \big[\Evh,\Fvh\big]_{\EDGE,\P} :=
  \Big(\PizvP{0}\Evh,\PizvP{0}\Fvh\Big)_{\P} +
  \SPEdge\Big( (\Is-\PizvP{0})\Evh, (\Is-\PizvP{0})\Fvh \Big)
  \\
  \forall\Evh,\,\Fvh\in\VvhEdge(\P),
  \label{discrete:bf:edge}
\end{multline}
where $\SPEdge(\cdot,\cdot)$ can be any computable, symmetric bilinear form such that there exist two positive constants~$\gamma_*$ and~$\gamma^*$ independent of~$\hE$ satisfying
\begin{align}
  \gamma_* \snorm{\Evh}{0,\P}^2
  \leq
  \SPEdge(\Evh,\Evh)  
  \le
  \gamma^* \snorm{\Evh}{0,\P}^2
  \quad\quad\forall\Evh\in\ker(\PizvP{0})\cap\VvhEdge(\P).
  \label{preliminary:stab}
\end{align}
An explicit stabilization satisfying~\eqref{preliminary:stab} was introduced in~\cite[formula~(4.8)]{BeiraodaVeiga-Brezzi-Dassi-Marini-Russo:2018}
and analyzed in~\cite[Proposition~5.5]{BeiraodaVeiga-Mascotto:2021}, and reads
\[
  \SPEdge(\Evh,\Fvh)  :=
  \hE^2\sum_{\F\in\setFaceP}\sum_{\e\in\setEdgeF}(\Evh\cdot\tv_{\e},\Fvh\cdot\tv_{\e})_{\e}
  \quad\quad \forall\Evh,\,\Fvh\in\VvhEdge(\P).
\]
The summation term in $\SPEdge(\cdot,\cdot)$ is multiplied by $\hP^2$
to have a consistent scaling for both terms of the edge inner
product~\eqref{discrete:bf:edge} with respect to the element size.
The bilinear form~\eqref{discrete:bf:edge} is computable on all
elements $\P\in\Th$ using the degrees of freedom of
$\Evh,\Fvh\in\VvhEdge(\P)$, and has the two crucial properties of
consistency and stability:
\begin{itemize}
\item \textbf{consistency}: for all $\qv\in\big[\PS{0}(\P)\big]^3$ and all $\Evh\in\VvhEdge(\P)$,
\begin{align} \label{consistency:edge}
\big[\qv,\Evh\big]_{\EDGE,\P} = (\qv,\Evh)_{\P};
\end{align}
  
\item \textbf{stability}: for all $\Evh\in\VvhEdge(\P)$,
  \begin{align}
    \eta_*\Vert\Evh\Vert^2_{0,\P}
    \leq
    \big[\Evh,\Evh\big]_{\EDGE,\P} 
    \leq
    \eta^*\Vert\Evh\Vert^2_{0,\P},
    \label{stability:edge}
  \end{align}
  with $\eta_*=\min(1,\gamma_*)$ and $\eta^*=\max(1,\gamma^*)$.
\end{itemize}
Finally, the global inner product over the virtual element edge space
$\VvhEdge(\P)$ is given by adding all the elemental contributions:
\begin{align*}
  \big[\Evh,\Fvh\big]_{\EDGE} := \sum_{\P\in\Th} \big[\Evh,\Fvh\big]_{\EDGE,\P}.
\end{align*}
In what follows, we shall use the following discrete norm:
\[
\norm{\cdot}{\VvhEdge}^2:= \big[\cdot,\cdot\big]_{\EDGE}.
\]

\subsection{The face space}
\label{subsection:face-space}

We define the virtual element face space on an element~$\P\in\Th$ as
\begin{align*}
  \VvhFace(\P) :=
  \bigg\{
  \Bvh
  & \in[\LS{2}(\P)]^3\,\mid\,
  \Bvh\cdot\nv_{\F}\in\PS{0}(\F)\,\forall\F\in \setFaceP,\nonumber\\
  &
  \begin{cases}
    \div \Bvh\in\PS{0}(\P),\nonumber\\
    \curl\Bvh\in\big[\PS{0}(\P)\big]^3
  \end{cases}
  \hspace{-4mm},\qquad
  %%\nonumber\\
  %&
  \int_\P\Bvh\cdot(\xvP\times\qv) = 0 \ \forall\qv\in\big[\PS{0}(\P)\big]^3
  \bigg\},
\end{align*}
where $\xvP=\xv-\bvP$.

\medskip
Each element $\Bvh\in\VvhFace(\P)$ is uniquely determined by its
normal components on the elemental faces, i.e., the set of constant
values of $\Bs_{\F}=\big(\Bvh\cdot\nv_{\F}\big)_{\F\in\setFaceP}$.
We take these values as the degrees of freedom of $\Bvh$ and refer the
reader to~\cite{BeiraodaVeiga-Brezzi-Dassi-Marini-Russo:2018} for the
proof of their unisolvence.
A major consequence of such a space definition is that~$\PizvP{0}\Bvh$
and the divergence of~$\Bvh$ are computable from the degrees of
freedom of~$\Bvh$ for all~$\Bvh\in\VvhFace(\P)$.

\medskip
We define the global face space~$\VvhFace$ by an
$\Hs{}(\div)$-coupling of the degrees of freedom:
\[
  \VvhFace :=
  \Big\{
  \Bvh\in\HSzr{}(\div,\Omega)
  \,\mid\,
  \restrict{\Bvh}{\P}\in\VvhFace(\P)\,\,\forall\P\in\Th
  \Big\}.
\]
This definition includes the homogeneous normal boundary conditions
on~$\partial\Omega$.

\subsubsection{Virtual element inner product}

From~\cite{BeiraodaVeiga-Brezzi-Marini-Russo:2016,BeiraodaVeiga-Dassi-Manzini-Mascotto:2021},
we recall the discretization of the $\LS{2}$-inner product for the
local face virtual element space, which reads as
\begin{multline} 
  \big[\Bvh,\Cvh\big]_{\FACE,\P}
  :=
  \Big(\PizvP{0}\Bvh,\PizvP{0}\Cvh\Big)_{0,\P} +
  \SPFace( (\Is-\PizvP{0})\Bvh, (\Is-\PizvP{0})\Cvh )
  \\
  \quad \forall\Bvh,\,\Cvh\in\VvhFace(\P),
  \label{discrete:bf:face}
\end{multline}
where $\SPFace(\cdot,\cdot)$ can be any computable, symmetric bilinear form such that there exist two positive constants~$\widetilde{\gamma}_*$ and~$\widetilde{\gamma}^*$ independent of~$\hE$ satisfying
\begin{equation} 
  \widetilde{\gamma}_* \Vert\Bvh\Vert^2_{0,\P}
  \leq
  \SPFace(\Bvh,\Bvh)
  \leq 
  \widetilde{\gamma}^* \Vert\Bvh\Vert^2_{0,\P}
  \quad\quad \forall\Bvh\in\ker(\PizvP{0})\cap\VvhFace(\P).
  \label{stability:preliminary:face}
\end{equation}
An explicit stabilization satisfying~\eqref{stability:preliminary:face} was introduced in~\cite[formula~(4.8)]{BeiraodaVeiga-Brezzi-Dassi-Marini-Russo:2018}
analyzed in~\cite[Proposition~5.2]{BeiraodaVeiga-Mascotto:2021}, and reads
\[
  \SPFace(\Bvh,\Cvh) :=
  \hE\sum_{\F\in\setFaceP} \big(\nv_{\F}\cdot\Bvh,\nv_{\F}\cdot\Cvh\big)_{0,\F}
  \quad\quad \forall\Bvh,\,\Cvh\in\VvhFace(\P).
\]
The bilinear form~\eqref{discrete:bf:face} is computable on all
elements $\P\in\Th$ using the degrees of freedom of
$\Bvh,\Cvh\in\VvhFace(\P)$, and has the two crucial properties of consistency and
stability:
\begin{itemize}
\item \textbf{consistency}: for all $\qv\in\big[\PS{0}(\P)\big]^3$ and all $\Bvh\in\VvhFace(\P)$,
\[
\big[\qv,\Bvh\big]_{\FACE,\P} = (\qv,\Bvh)_{\P};
\]
\item \textbf{stability}: for all $\Bvh\in\VvhFace(\P)$:
  \begin{align}  
    \chi_*\Vert\Bvh\Vert^2_{0,\P} 
    \le
    \big[\Bvh,\Bvh\big]_{\FACE,\P}
    \le
    \chi^*\Vert\Bvh\Vert^2_{0,\P},
    \label{stability:face} 
  \end{align}
  with $\chi_*=\min(1,\widetilde{\gamma}_*)$ and
  $\chi^*=\max(1,\widetilde{\gamma}^*)$.
\end{itemize}
Finally, the inner product over the global virtual element space
$\VvhFace$ is given by summing all the elemental contributions:
\begin{align*}
  \big[\Bvh,\Cvh\big]_{\FACE} :=
  \sum_{\P\in\Th} \big[\Bvh,\Cvh\big]_{\FACE,\P}.
\end{align*}
In what follows, we shall use the following discrete norm:
\[
\norm{\cdot}{\VvhFace}^2:= \big[\cdot,\cdot\big]_{\FACE}.
\]

%%%%%%%%%
\subsection{Interpolations and exact sequence properties} \label{subsection:exact-sequences}

\begin{comment}
\setlength{\unitlength}{0.30mm}
\begin{figure}[t]
  \begin{center}
  \begin{picture}(400,100)

    %% continuous spaces
    \put(  0, 50){$\HS{1}(\Omega)$}
    \put(110, 50){$\HS{}(\curl,\Omega)$}
    \put(240, 50){$\HS{}(\div, \Omega)$}
    \put(365, 50){$\LS{2}(\Omega)$}
    %% vectors
    \put( 40, 54){\vector(1,0){65}}
    \put(170, 54){\vector(1,0){65}}
    \put(295, 54){\vector(1,0){65}}
    %% operators
    \put( 65, 58){$\nabla$}
    \put(190, 59){$\curl$}
    \put(320, 59){$\div$}

    %% mesh functions
    \put(  0,  0){$\VshNode$}
    \put(120,  0){$\VvhEdge$}
    \put(240,  0){$\VvhFace$}
    \put(360,  0){$\VshCell$}
    %% vectors
    \put( 40,  3){\vector(1,0){60}}
    \put(170,  3){\vector(1,0){60}}
    \put(295,  3){\vector(1,0){60}}
    %% operators
    \put( 65, 10){$\nabla$}
    \put(190, 10){$\curl$}
    \put(320, 10){$\div$}

    %% vertical vectors
    \put(  6, 44){\vector(0,-1){30}}
    \put(125, 44){\vector(0,-1){30}}
    \put(248, 44){\vector(0,-1){30}}
    \put(367, 44){\vector(0,-1){30}}

    %% projectors/reconstructors
    \put( 12, 25){$\IntpVh$}
    \put(131, 25){$\IntpEh$}
    \put(254, 25){$\IntpFh$}
    \put(371, 25){$\IntpPh$}
    
  \end{picture}
  \end{center}
  \caption{De~Rham diagram for the primal mesh
    functions. \textbf{Bisogna usare i simboli ``tildati''? (vedi
      testo nells subsection)}}
  \label{fig:DeRham:primal:mesh:functions}
\end{figure}
\end{comment}

\setlength{\unitlength}{0.30mm}
\begin{figure}[b]
  \begin{center}
  \begin{picture}(400,100)

    %% continuous spaces
    %    \put(  0, 50){$\HS{1}(\Omega)$}
    \put(110, 50){$\HS{}(\curl,\Omega)$}
    \put(240, 50){$\HS{}(\div, \Omega)$}
    %\put(365, 50){$\LS{2}(\Omega)$}
    %% vectors
    %\put( 40, 54){\vector(1,0){65}}
    \put(170, 54){\vector(1,0){65}}
    %\put(295, 54){\vector(1,0){65}}
    %% operators
    %\put( 65, 58){$\nabla$}
    \put(190, 59){$\curl$}
    %\put(320, 59){$\div$}

    %% mesh functions
    %\put(  0,  0){$\VshNode$}
    \put(120,  0){$\VvhEdge$}
    \put(240,  0){$\VvhtFace$}
    %\put(360,  0){$\VshCell$}
    %% vectors
    %\put( 40,  3){\vector(1,0){60}}
    \put(170,  3){\vector(1,0){60}}
    %\put(295,  3){\vector(1,0){60}}
    %% operators
    %\put( 65, 10){$\nabla$}
    \put(190, 10){$\curl$}
    %\put(320, 10){$\div$}

    %% vertical vectors
    %\put(  6, 44){\vector(0,-1){30}}
    \put(125, 44){\vector(0,-1){30}}
    \put(248, 44){\vector(0,-1){30}}
    %\put(367, 44){\vector(0,-1){30}}

    %% projectors/reconstructors
    %\put( 12, 25){$\IntpVh$}
    \put(131, 25){$\IntpEh$}
    \put(254, 25){$\IntpFh$}
    %\put(371, 25){$\IntpPh$}
    
  \end{picture}
  \end{center}
  \caption{Commuting diagram for the edge and face virtual element
    functions.}
  \label{fig:DeRham:primal:mesh:functions}
\end{figure}

As thoroughly discussed, e.g., in~\cite{BeiraodaVeiga-Brezzi-Dassi-Marini-Russo:2018},
the above spaces satisfy fundamental exact sequence properties.
Introduce an edge approximation operator $\IntpEh:\HS{}(\curl,\Omega)\to\VvhEdge$ as follows.
The edge approximant~$\IntpEh(\Ev) := \EvI\in\VvhEdge$ of a given vector field $\Ev\in\HS{}(\curl,\Omega)$ is defined as the solution to the variational problem
\begin{subequations}
  \label{eq:EvI:intp}
  \begin{align}
    \scalFace{\curl\EvI}{\curl\Fvh} &= \scal{\curl\Ev}{\curl\Fvh} && \hspace{-1.25cm}\forall\Fvh\in\VvhEdge, \label{eq:EvI:intp:A}\\[0.5em]
    \scalEdge{\EvI}{\nabla\rsh}     &= \scal{\Ev}{\nabla\rsh}     && \hspace{-1.25cm}\forall\rsh\in\VshNode, \label{eq:EvI:intp:B}
  \end{align}
\end{subequations}
where the spaces~$\VvhEdge$ and~$\VshNode$ are defined in~\eqref{global-edge-space} and~\eqref{global-nodal-space}, respectively.
The following approximation bound is valid~\cite[Proposition~5]{BeiraodaVeiga-Dassi-Manzini-Mascotto:2021}.
\begin{lem} \label{lemma:intp:Ev}
Let $\Ev\in\HS{}(\curl,\Omega)$ and consider the approximant~$\EvI\in\VvhEdge$ defined as in~\eqref{eq:EvI:intp}.
Then, there exists a real, positive constant~$\Cintp$ independent of~$\hh$ such that
\[
\norm{\Ev-\EvI}{\curlbold} 
\leq \Cintp \hh\big(\snorm{\Ev}{1,\Omega} + \norm{\curlbold \Ev}{0,\Omega} + \hh \snorm{\curlbold \Ev}{1,\Omega}\big).
\]
\end{lem}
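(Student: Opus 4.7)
The plan is to split the error via an auxiliary degrees-of-freedom (DOF) based interpolant $\widetilde{\Ev}_{\INTP} \in \VvhEdge$ of $\Ev$, namely the unique element of $\VvhEdge$ whose tangential edge moments $\int_{\e} \widetilde{\Ev}_{\INTP}\cdot\tv_{\e}$ coincide with those of $\Ev$. Writing
\[
\Ev - \EvI = (\Ev - \widetilde{\Ev}_{\INTP}) + (\widetilde{\Ev}_{\INTP} - \EvI) =: \eta + \delta_{\hh},
\]
I would bound the two pieces separately in the $\norm{\cdot}{\curlbold}$-norm.

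For $\eta$, I would invoke the standard interpolation estimates for DOF-based edge interpolants on shape-regular polytopal meshes. Their key ingredient is the Stokes-theorem commuting property $\curl \widetilde{\Ev}_{\INTP} = \widetilde{(\curl \Ev)}_{\INTP}$ (the face DOF-interpolant of $\curl \Ev$) depicted in Figure~\ref{fig:DeRham:primal:mesh:functions}, which combined with polynomial approximation on the edge and face spaces yields $\norm{\eta}{0} \lesssim \hh \snorm{\Ev}{1,\Omega}$ and $\norm{\curl \eta}{0} \lesssim \hh \snorm{\curl \Ev}{1,\Omega}$.

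For the discrete correction $\delta_{\hh} \in \VvhEdge$, I would exploit both defining relations~\eqref{eq:EvI:intp:A}--\eqref{eq:EvI:intp:B}. Testing~\eqref{eq:EvI:intp:A} with $\Fvh = \delta_{\hh}$ and rewriting $\scalFace{\curl\widetilde{\Ev}_{\INTP}}{\curl\delta_{\hh}} - (\curl\Ev,\curl\delta_{\hh})$ by adding and subtracting $(\widetilde{(\curl\Ev)}_{\INTP},\curl\delta_{\hh})$, two contributions emerge: a variational crime of the form $\scalFace{\widetilde{(\curl\Ev)}_{\INTP}}{\curl\delta_{\hh}} - (\widetilde{(\curl\Ev)}_{\INTP},\curl\delta_{\hh})$, controlled by consistency on $[\PS{0}(\P)]^3$ plus the stability~\eqref{stability:face}, and the face DOF-interpolation error $(\widetilde{(\curl\Ev)}_{\INTP} - \curl\Ev,\curl\delta_{\hh})$. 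Coercivity from~\eqref{stability:face} on the left-hand side then gives a bound of the form $\norm{\curl\delta_{\hh}}{0} \lesssim \hh\,\big(\snorm{\Ev}{1,\Omega} + \norm{\curl\Ev}{0,\Omega} + \hh\,\snorm{\curl\Ev}{1,\Omega}\big)$. For the $L^2$ part of $\delta_{\hh}$, I would perform a discrete Hodge--Helmholtz decomposition $\delta_{\hh} = \nabla\phi_{\hh} + \psi_{\hh}$ with $\phi_{\hh} \in \VshNode$ and $\psi_{\hh}$ $\scalEdge{\cdot}{\cdot}$-orthogonal to discrete gradients. The gradient part is controlled via~\eqref{eq:EvI:intp:B} after subtracting the analogous identity with $\widetilde{\Ev}_{\INTP}$ in place of $\EvI$; the solenoidal part is bounded by $\norm{\curl\delta_{\hh}}{0}$ through a discrete Poincaré inequality on $\VvhEdge$.

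The main obstacle is precisely this last ingredient, the discrete Poincaré inequality on $\VvhEdge$ with a constant independent of $\hh$: it is the technical heart of any VEM--Maxwell interpolation analysis and rests on the exact-sequence structure of the VEM de~Rham complex together with the stability bounds~\eqref{stability:edge}--\eqref{stability:face} that allow passage between $\norm{\cdot}{0}$ and the discrete inner products. Once this is in hand, collecting the estimates via the triangle inequality produces the claimed bound, with the three summands on the right-hand side reflecting, respectively, the $L^2$-approximation of $\Ev$, the $L^2$-approximation of $\curl\Ev$, and the $\hh$-scaled consistency loss from the stabilization of the face inner product.
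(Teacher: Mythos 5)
The paper does not prove this lemma at all: it imports it verbatim from [Proposition~5] of the cited Maxwell--VEM reference by the same authors, so there is no in-paper argument to compare against. Your strategy --- splitting through an auxiliary interpolant, using the commuting diagram, the consistency/stability of the discrete face and edge inner products, the two defining relations~\eqref{eq:EvI:intp:A}--\eqref{eq:EvI:intp:B}, a discrete Helmholtz decomposition, and a discrete Poincar\'e--Friedrichs inequality on $\VvhEdge$ --- is essentially the route taken in that reference, and you are right that the discrete Poincar\'e inequality is the technical crux (it is established in the companion stability/interpolation paper cited alongside).

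That said, three points keep this from being a self-contained proof. First, your auxiliary object $\widetilde{\Ev}_{\INTP}$ is the degrees-of-freedom interpolant, whose DOFs are line integrals $\int_{\e}\Ev\cdot\tv_{\e}$ over mesh edges; these are not defined for a generic $\Ev\in[\HS{1}(\Omega)]^3$, since edges are one-dimensional sets in $\Rbb^3$. This is precisely why $\EvI$ is defined through a variational problem rather than through DOFs. The interpolant can be made rigorous under the regularity implicit in the right-hand side (e.g.\ $\curl\Ev\in[\HS{1}(\Omega)]^3\hookrightarrow[\LS{6}(\Omega)]^3$ together with $\Ev\in[\HS{1}(\Omega)]^3$ suffices by Monk-type arguments), but this has to be said, and the estimate $\norm{\Ev-\widetilde{\Ev}_{\INTP}}{}\lesssim\hh\snorm{\Ev}{1}$ for a \emph{virtual} (not N\'ed\'elec) edge space is itself a nontrivial result that you invoke as ``standard.'' Second, the three external inputs you rely on --- the $\LS{2}$ interpolation estimates for the edge and face DOF interpolants and the discrete Poincar\'e inequality --- carry essentially all of the difficulty, so the proposal is a correct reduction of the lemma to known results rather than a proof. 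Third, the bookkeeping of the powers of $\hh$ is asserted rather than verified: by the commuting property $\curl\EvI=(\curl\Ev)_{\INTPINTP}$, the curl part of the error is a quasi-best approximation of $\curl\Ev$ in $\VvhtFace$, whose natural bound is $\hh\snorm{\curl\Ev}{1}$; this is not obviously dominated by $\hh\norm{\curl\Ev}{0}+\hh^2\snorm{\curl\Ev}{1}$, so matching the precise form of the stated right-hand side requires a more careful accounting than the sketch provides.
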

Introduce a face approximation operator $\IntpFh:\HS{}(\div,\Omega)\to\VvhFace$ as follows.
The face approximant
\[
\IntpFh (\Bv):= \BvII \in \VvhtFace
:=\left\{ \Cvh \in \VvhFace \mid \div \Cvh =0  \right\}
\]
of a given vector field~$\Bv \in H(\div,\Omega)$ is defined as the solution to the variational problem
\begin{align} \label{eq:BvI:intp}
\scalFace{ \BvII }{ \Cvh } = \scal{ \Bv }{ \Cvh } \qquad\forall\Cvh \in \VvhtFace.
\end{align}
We have that~$\VvhtFace = \curl(\VvhEdge)$; 
see~\cite[equation~(4.35)]{BeiraodaVeiga-Brezzi-Dassi-Marini-Russo:2018}.

We recall the following approximation result~\cite[Lemma~7]{BeiraodaVeiga-Dassi-Manzini-Mascotto:2021}.
\begin{lem}  \label{lemma:intp:Bv}
Let~$\Bv \in [H^1(\Omega)]^3$ be a divergence-free magnetic field
and~$\BvII\in\VvhFace$ be its approximant defined as
in~\eqref{eq:BvI:intp}.  Then, there exists a real, positive constant
$\CI$ independent of~$\hh$ such that
\[
\norm{\Bv-\BvII}{} \leq \CI \hh \norm{\Bv}{1}.
\]
\end{lem}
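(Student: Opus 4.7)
The strategy is the standard one for a discretely defined Galerkin projection: compare $\BvII$ to an auxiliary interpolant whose approximation properties are known, and use stability/consistency of the discrete face inner product to control the difference. Concretely, I will introduce an auxiliary approximant $\BvhHAT \in \VvhtFace$ of $\Bv$ built directly from the degrees of freedom of the face space, namely the one having as face DoFs the averages $\frac{1}{\mF}\int_\F \Bv \cdot \nv_\F$. By the commuting diagram in Figure~\ref{fig:DeRham:primal:mesh:functions} and $\div \Bv = 0$, the divergence of $\BvhHAT$ vanishes, so $\BvhHAT \in \VvhtFace$. The standard VEM interpolation estimate for the face space (obtained by local Bramble-Hilbert arguments under the mesh regularity assumptions $\ASSUM{M}{1}$--$\ASSUM{M}{2}$) yields $\norm{\Bv - \BvhHAT}{} \le \Cs\, \hh\, \norm{\Bv}{1}$.

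Next I would split the error by the triangle inequality:
\[
\norm{\Bv - \BvII}{} \le \norm{\Bv - \BvhHAT}{} + \norm{\BvhHAT - \BvII}{},
\]
so it remains to control the second addendum, in which both terms lie in $\VvhtFace$. Setting $\boldsymbol\delta_\hh := \BvhHAT - \BvII$, by the stability bound~\eqref{stability:face} and the definition~\eqref{eq:BvI:intp} of $\BvII$,
\[
\chi_*\, \norm{\boldsymbol\delta_\hh}{}^2
\le \scalFace{\boldsymbol\delta_\hh}{\boldsymbol\delta_\hh}
= \scalFace{\BvhHAT}{\boldsymbol\delta_\hh} - \scal{\Bv}{\boldsymbol\delta_\hh}.
\]
Introducing the element-wise $L^2$-projection $\Bv_\pi := \Piz{0}\Bv \in [\PS{0}(\Th)]^3$ and applying the consistency property of $\scalFace{\cdot}{\cdot}$ on each element yields
\[
\scalFace{\BvhHAT}{\boldsymbol\delta_\hh} - \scal{\Bv}{\boldsymbol\delta_\hh}
= \scalFace{\BvhHAT - \Bv_\pi}{\boldsymbol\delta_\hh} + \scal{\Bv_\pi - \Bv}{\boldsymbol\delta_\hh}.
\]

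To bound both right-hand side terms, I would apply Cauchy-Schwarz together with the continuity of $\scalFace{\cdot}{\cdot}$ (again a consequence of~\eqref{stability:face}), obtaining
\[
\bigl| \scalFace{\BvhHAT}{\boldsymbol\delta_\hh} - \scal{\Bv}{\boldsymbol\delta_\hh} \bigr|
\le \Cs\, \bigl( \norm{\BvhHAT - \Bv_\pi}{} + \norm{\Bv - \Bv_\pi}{} \bigr)\, \norm{\boldsymbol\delta_\hh}{}.
\]
Standard polynomial approximation gives $\norm{\Bv - \Bv_\pi}{} \le \Cappr\, \hh\, \norm{\Bv}{1}$, whence also $\norm{\BvhHAT - \Bv_\pi}{} \le \norm{\BvhHAT - \Bv}{} + \norm{\Bv - \Bv_\pi}{} \le \Cs\, \hh\, \norm{\Bv}{1}$ by the auxiliary interpolation estimate. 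Dividing by $\norm{\boldsymbol\delta_\hh}{}$ produces the required bound for the second addendum, and the lemma follows.

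The main obstacle I expect is the rigorous construction and analysis of the auxiliary interpolant $\BvhHAT$: one must show that it exists, matches the chosen DoFs, is divergence free whenever $\Bv$ is (i.e.\ that the $\div$ operator commutes with the face-DoF interpolation), and enjoys the $O(\hh)$ approximation estimate in $L^2$ on general polyhedral elements. This is a nontrivial local analysis on each polyhedron, but it is by now standard in the VEM framework (see~\cite{BeiraodaVeiga-Brezzi-Dassi-Marini-Russo:2018}); once this piece is granted, the remainder of the argument is a purely algebraic manipulation based on the consistency-stability paradigm of the discrete inner product $\scalFace{\cdot}{\cdot}$.
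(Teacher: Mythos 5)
Your argument is correct, but it is worth noting that the paper does not actually prove this lemma: it is imported verbatim as Lemma~7 of the cited companion work on the Maxwell equations, so there is no in-paper proof to compare against. Your plan supplies a genuine, self-contained argument of the standard consistency--stability type: the splitting $\norm{\Bv-\BvII}{}\le\norm{\Bv-\BvhHAT}{}+\norm{\BvhHAT-\BvII}{}$, the coercivity bound $\chi_*\norm{\boldsymbol\delta_\hh}{}^2\le\scalFace{\boldsymbol\delta_\hh}{\boldsymbol\delta_\hh}$, the use of the defining relation~\eqref{eq:BvI:intp} with test function $\boldsymbol\delta_\hh\in\VvhtFace$ (legitimate because $\div\BvhHAT=0$, which follows elementwise from the divergence theorem and the fact that $\div$ of a face virtual function is constant per element), and the insertion of the piecewise constant $\Bv_\pi$ via the consistency property of $\scalFace{\cdot}{\cdot}$ (note that $[\PS{0}(\P)]^3\subset\VvhFace(\P)$, so the continuity bound applies to $\BvhHAT-\Bv_\pi$). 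All of these steps check out. The one external ingredient you rely on --- the $O(\hh)$ estimate in $L^2$ for the degrees-of-freedom interpolant $\BvhHAT$ on general polyhedra --- is exactly the same ingredient the paper itself imports (cf.\ the bound~\eqref{interpolation:dofi-face} and the reference to Proposition~4.5 of the cited stability paper), so you are not assuming more than the authors do. In short, your route buys a transparent proof from first principles at the cost of quoting the interpolation estimate, whereas the paper buys brevity by quoting the entire lemma.
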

Additionally, we have a commuting property involving the operators
defined in~\eqref{eq:EvI:intp} and~\eqref{eq:BvI:intp}, and the curl
operator, which we report in the next lemma, whose proof can be found
in~\cite[Proposition~6]{BeiraodaVeiga-Dassi-Manzini-Mascotto:2021}.
The interpolation diagram is shown in
Figure~\ref{fig:DeRham:primal:mesh:functions}.
\begin{lem}  \label{lem:commuting-diagram}
  Let~$\EvI \in \VvhEdge$ and~$(\curl\Ev)_{\INTPINTP} \in \VvhFace$ be
  the approximant of~$\Ev$ and~$\curl\Ev\in\HS{}(\div,\Omega)$ defined
  in~\eqref{eq:EvI:intp} and~\eqref{eq:BvI:intp}, respectively.  Then,
  the following commuting property is valid:
  \[
  \curl(\EvI) = (\curl\Ev)_{\INTPINTP}\quad\quad\forall\Ev\in\HS{}(\curl,\Omega).
  \]
\end{lem}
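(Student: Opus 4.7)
The plan is to proceed by uniqueness: I want to show that $\curl(\EvI)$ satisfies the defining variational characterization of $(\curl \Ev)_{\INTPINTP}$ in $\VvhtFace$, so that the two objects must coincide. The key ingredient is the identity $\VvhtFace = \curl(\VvhEdge)$ that the excerpt cites from \cite{BeiraodaVeiga-Brezzi-Dassi-Marini-Russo:2018}.

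First I would check that $\curl(\EvI)$ actually belongs to $\VvhtFace$. Since $\EvI \in \VvhEdge$, we have $\curl(\EvI) \in \curl(\VvhEdge) = \VvhtFace$; in particular, $\div(\curl \EvI) = 0$ holds at the continuous level, so it is divergence-free as required. Thus $\curl(\EvI)$ is an admissible competitor for the face-interpolant variational problem \eqref{eq:BvI:intp}.

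Next, I would verify the defining identity of $(\curl \Ev)_{\INTPINTP}$ with $\curl(\EvI)$ inserted. Pick an arbitrary test function $\Cvh \in \VvhtFace$. By the surjectivity property $\VvhtFace = \curl(\VvhEdge)$, there exists $\Fvh \in \VvhEdge$ such that $\Cvh = \curl \Fvh$. Then by equation \eqref{eq:EvI:intp:A} (the first defining relation of $\EvI$), we have
\[
\scalFace{\curl \EvI}{\Cvh} = \scalFace{\curl \EvI}{\curl \Fvh} = \scal{\curl \Ev}{\curl \Fvh} = \scal{\curl \Ev}{\Cvh}.
\]
This is precisely the characterization \eqref{eq:BvI:intp} of $(\curl \Ev)_{\INTPINTP}$. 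Because $[\cdot,\cdot]_{\FACE}$ is a genuine inner product on $\VvhFace$ (in particular on the subspace $\VvhtFace$) by the stability bound \eqref{stability:face}, this characterization has a unique solution in $\VvhtFace$, and therefore $\curl(\EvI) = (\curl \Ev)_{\INTPINTP}$.

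The argument is essentially one of bookkeeping; there is no delicate estimate to make. The only step that one must be careful about is the appeal to $\VvhtFace = \curl(\VvhEdge)$: without this surjectivity, the test space in \eqref{eq:EvI:intp:A} (namely $\curl \Fvh$ with $\Fvh$ ranging over $\VvhEdge$) would a priori be only a subspace of $\VvhtFace$, and one would not be allowed to conclude directly. Since this identity is available from prior work, the proof reduces to the short chain of equalities above, and the main conceptual point to highlight in the write-up is the role of the discrete exact sequence in making the commuting diagram close.
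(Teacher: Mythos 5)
Your argument is correct and is the standard (and, as far as I can tell, the intended) one: the paper itself does not reproduce a proof of this lemma but defers to Proposition~6 of the cited reference, where the reasoning is exactly the uniqueness argument you give. The two essential points — that $\curl\EvI$ lies in $\VvhtFace$ and that every test function $\Cvh\in\VvhtFace$ is of the form $\curl\Fvh$ with $\Fvh\in\VvhEdge$, both consequences of the identity $\VvhtFace=\curl(\VvhEdge)$ — are correctly identified, and the uniqueness of the solution to~\eqref{eq:BvI:intp} follows from the coercivity of $\big[\cdot,\cdot\big]_{\FACE}$ on the finite-dimensional space $\VvhtFace$ guaranteed by~\eqref{stability:face}, as you say.
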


\subsection{The semi-discrete MHD model}
\label{subsection:semi-discrete-scheme}

First, note that $-(\jv\times\Bv,\vv)=(\jv,\vv\times\Bv)$ in
equation~\eqref{eq:MHD:weak:A}.
To discretize this term, we consider the elemental bilinear operator
$\chihP:\Wvh(\P)\times\VvhFace(\P)\to\VvhEdge(\P)$.
This operator denotes the discretization of the cross product
``$\vvh\times\Bvh$'' for $\vvh\in\Wvh(\P)$ and $\Bvh\in\VvhFace(\P)$
through the elemental edge function
$\chihP(\vvh,\Bvh)\in\VvhEdge(\P)$.
For the sake of notation, we also introduce the global operator
\begin{align*}
  \chih:\Wvh\times\VvhFace\to\Pi_{\P\in\Th}\VvhEdge(\P)
  \quad\textrm{such that}\quad
  \restrict{\chih(\vvh,\Bvh)}{\P}=\chihP(\restrict{\vvh}{\P},\restrict{\Bvh}{\P})
\end{align*}
for the global fields $\vvh\in\Wvh$ and $\Bvh\in\VvhFace$.
We emphasize that $\chih(\vvh,\Bvh)$ does not belong to $\VvhEdge$
since it is not $\HS{}(\curl,\Omega)$-conforming.
However, to simplify several equations below, with an abuse of
notation, we let
\begin{align*}
  \scalEdge{\Fvh}{\chih(\vvh,\Bvh)} := \sum_{\P\in\Th} \scalEdgeP{\Fvh}{\chih(\vvh,\Bvh)}.
\end{align*}
Using this bilinear operator, we define the ``broken'' virtual element edge function
\begin{align*}
  \jvh=\Evh+\chih(\uvh,\Bvh),
\end{align*}
which can be considered as the electric current density of the
discrete MHD model.
For the sake of simplicity, we take $\ss=1$ in~\eqref{eq:continuous:problem-strong:a} and~\eqref{eq:MHD:weak:A}.

Several choices for~$\chih$ are possible. Among them, we pick
\begin{equation} \label{choice:chih}
\chih(\vvh, \Bvh)_{|\E} := \PizP0 \vvh \times \PizP0 \Bvh.
\end{equation}

\medskip
The semi-discrete virtual element method for the MHD equations reads as:
\emph{For every~$t \in (0,T)$, find
$(\uvh,\psh,\Evh,\Bvh)\in\Wvh\times\Qsh\times\VvhEdge\times\VvhFace$
such that}\\
\begin{subequations}
  \label{eq:VEM:weak}
  \begin{align}
    \msh(\uvht,\vvh) + \REY^{-1}\ash(\uvh,\vvh) + \bs(\vvh,\psh) & + \cht(\uvh;\uvh,\vvh) + \scalEdge{\jvh}{\chih(\vvh,\Bvh)} \nonumber\\
    \qquad\qquad\qquad = (\fv,\Piz1\vvh) &\phantom{=~0} \qquad\forall\vvh\in\Wvh \subset\big[\HSzr{1}(\Omega)\big]^3,                      \label{eq:VEM:weak:A}\\[0.25em]
    \scalEdge{\jvh}{\Fvh}  - \REM^{-1}\scalFace{\Bvh}{\curl\Fvh}                                          &= 0 \qquad\forall\Fvh\in\VvhEdge\subset\HSzr{}(\curl,\Omega), \label{eq:VEM:weak:B}\\[0.25em]
    \scalFace{\Bvht}{\Cvh} + \scalFace{\curl\Evh}{\Cvh}                                                  &= 0 \qquad\forall\Cvh\in\VvhFace\subset\HSzr{}(\div,\Omega),  \label{eq:VEM:weak:C}\\[0.25em]
    \bs(\uvh,\qsh)                                                                                       &= 0 \qquad\forall\qsh\in\Qsh    \subset\LSzr{}(\Omega).  \label{eq:VEM:weak:D}
  \end{align}
\end{subequations}
We provide the semi-discrete scheme~\eqref{eq:VEM:weak} with the discrete initial conditions $\uvh(0)=\uv_{\hh,0}$ and $\Bvh(0)=\Bv_{\hh,0}$,
where~$\uv_{\hh,0}$ and~$\Bv_{\hh,0}$ are the degrees of freedom interpolants in~$\Wvh$ and~$\VvhFace$ of the continuous initial conditions~$\uv_0$ and~$\Bv_0$ in~\eqref{eq:initial:conditions}.
Provided that~$\Bv$ is sufficiently regular,
from~\cite[Proposition~$4.5$]{BeiraodaVeiga-Mascotto:2021}, the degrees of freedom interpolant of~$\Bv$ in the face space~$\VvhFace(\P)$ satisfies
\begin{equation} \label{interpolation:dofi-face}
\norm{\Bv(0)-\Bv_{\hh,0}}{0,\E} \lesssim \hP.
\end{equation}

\begin{rem} \label{remark:zero-divergence-discrete}
Recalling that~$\div \uvh \in \Qsh$ and~$\curlbold \Evh \in \VvhFace$,
the third and fourth equations provide
\begin{equation} \label{eq:discrete-solenoidal-constraints}
\Bvht + \curlbold \Evh = \mathbf 0, \qquad \div \uvh = 0 \qquad \text{in } \Omega.
\end{equation}
As a consequence, by setting the initial data such that $\div \Bv_{\hh,0}=0$,
the first equation in~\eqref{eq:discrete-solenoidal-constraints} implies $\div \Bvh=0$ at all times.
Thus, the proposed scheme satisfies both solenoidal constraints exactly.
\end{rem}

We conclude this section by presenting the following stability result
on the solution to the semi-discrete scheme~\eqref{eq:VEM:weak} and
briefly commenting on the existence of a (unique) solution to
\eqref{eq:VEM:weak}.
\begin{prop} \label{proposition:stability-discrete}
Let~$\uvh$, $\Evh$, and~$\Bvh$ be solutions to~\eqref{eq:VEM:weak}.
Then, the following stability estimates are valid:
\[
\begin{split}
\frac12 \frac{\text{d}}{\text{dt}} \msh(\uvh,\uvh)
 + \frac12 \REM^{-1} \frac{\text{d}}{\text{dt}} \scalFace{\Bvh}{\Bvh}
 + \REY^{-1} \ash(\uvh,\uvh) 
 + \scalEdge{\jvh}{\jvh} = (\fv, \Piz1 \uvh) \quad \forall t \in (0,T)
\end{split}
\]
and
\[
\begin{split}
& \max_{0\le t\le T} \left( \msh(\uvh,\uvh) + \REM^{-1} \scalFace{\Bvh}{\Bvh} \right)
+ \text{Re}^{-1} \int_0^T \ash(\uvh,\uvh)
+ 2 \int_0^T \scalEdge{\jvh}{\jvh}\\
& \quad
\le \msh(\uv_{\hh,0},\uv_{\hh,0}) + \REM^{-1}  \scalFace{\Bv_{\hh,0}}{\Bv_{\hh,0}}
   + \alpha_*^{-1} \CP \REY \int_0^T \norm{\fv}{0}.
\end{split}
\]
\end{prop}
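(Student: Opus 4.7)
The plan is to execute a standard energy estimate on the semi-discrete system \eqref{eq:VEM:weak}. I would test \eqref{eq:VEM:weak:A} with $\vvh=\uvh$, \eqref{eq:VEM:weak:B} with $\Fvh=\Evh$, \eqref{eq:VEM:weak:C} with $\Cvh=\REM^{-1}\Bvh$, and \eqref{eq:VEM:weak:D} with $\qsh=\psh$, and then sum the four identities.

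Several cancellations make the estimate close. The pressure pairing $\bs(\uvh,\psh)$ in \eqref{eq:VEM:weak:A} vanishes by \eqref{eq:VEM:weak:D}. Skew-symmetry of $\ctilden(\cdot;\cdot,\cdot)$ in its last two arguments (see \eqref{eq:ctildeEh:def}) yields $\cht(\uvh;\uvh,\uvh)=0$. The symmetry and bilinearity of $\msh$ and $\scalFace{\cdot}{\cdot}$ convert $\msh(\uvht,\uvh)$ and $\scalFace{\Bvht}{\Bvh}$ into $\frac{1}{2}\frac{d}{dt}\msh(\uvh,\uvh)$ and $\frac{1}{2}\frac{d}{dt}\scalFace{\Bvh}{\Bvh}$, respectively. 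The key electromagnetic cancellation is the curl duality: summing the test of \eqref{eq:VEM:weak:B} by $\Evh$ with $\REM^{-1}$ times the test of \eqref{eq:VEM:weak:C} by $\Bvh$ gives
\[
\scalEdge{\jvh}{\Evh} + \frac{1}{2}\REM^{-1}\frac{d}{dt}\scalFace{\Bvh}{\Bvh} = 0.
\]
Substituting $\Evh=\jvh-\chih(\uvh,\Bvh)$ transforms the mixed coupling $\scalEdge{\jvh}{\chih(\uvh,\Bvh)}$ appearing in the tested \eqref{eq:VEM:weak:A} into $\scalEdge{\jvh}{\jvh}+\frac{1}{2}\REM^{-1}\frac{d}{dt}\scalFace{\Bvh}{\Bvh}$, producing exactly the first identity of the proposition.

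For the integrated bound, I would integrate the pointwise identity on $[0,T]$ and bound the forcing contribution using the contractivity of $\Piz{1}$, the Poincar\'e inequality on $[\HSzr{1}(\Omega)]^3$, and the coercivity \eqref{eq:velocity:H1:stability} of $\ash$:
\[
\bigl(\fv,\Piz{1}\uvh\bigr) \leq \norm{\fv}{0}\,\norm{\uvh}{0} \leq \CP\,\norm{\fv}{0}\,\snorm{\uvh}{1} \leq \CP\,\alpha_*^{-1/2}\,\norm{\fv}{0}\,\ash(\uvh,\uvh)^{1/2}.
\]
A Young-type splitting then absorbs a fraction of the dissipation $\REY^{-1}\ash(\uvh,\uvh)$ into the left-hand side, so that the remaining forcing term may be controlled by the integral involving $\norm{\fv}{0}$ with the constant $\alpha_*^{-1}\CP\REY$, and finally taking the supremum in $t\in[0,T]$ of the energy gives the stated maximal-in-time bound.

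The only real obstacle is the careful bookkeeping that aligns the coupling term $\scalEdge{\jvh}{\chih(\uvh,\Bvh)}$ in the momentum equation with the combination extracted from the Ohm/Faraday pair so that the magnetic work cancels and the Joule dissipation $\scalEdge{\jvh}{\jvh}$ appears with the correct sign; once this is in place, everything else reduces to continuity and coercivity of the local forms together with Remark~\ref{remark:zero-divergence-discrete}. Finally, existence (and uniqueness on the relevant time interval) of a solution to \eqref{eq:VEM:weak} follows from the Cauchy-Lipschitz theory applied to the associated nonlinear differential-algebraic system, using the discrete inf-sup of Remark~\ref{remark:inf-sup} to eliminate the pressure and the a priori bound just derived to exclude blow-up.
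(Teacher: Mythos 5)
Your proposal is correct and follows essentially the same route as the paper: test with $(\uvh,\Evh,\Bvh,\psh)$, exploit the skew-symmetry of $\ctilden$, the vanishing of the pressure coupling, and the curl duality between the Ohm and Faraday equations to obtain the energy identity, then bound $(\fv,\Piz1\uvh)$ via the $L^2$-contractivity of $\Piz1$, Poincar\'e, and the coercivity of $\ash$, absorb half the viscous dissipation, and integrate in time. Your choice of scaling (multiplying \eqref{eq:VEM:weak:C} by $\REM^{-1}$ rather than \eqref{eq:VEM:weak:B} by $\REM$) is equivalent to the paper's up to a global constant and yields the same cancellation.
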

\begin{proof}
The first estimate follows picking $\vvh=\uvh$
in~\eqref{eq:VEM:weak:A}, $\Fvh=\Evh$ in~\eqref{eq:VEM:weak:B},
and~$\Cvh=\Bvh$ in~\eqref{eq:VEM:weak:C},
multiplying~\eqref{eq:VEM:weak:B} by~$\REM$, and summing up the three
resulting equations.

As for the second one, we observe that
\[
\begin{split}
(\fv,\Piz1 \uvh)
& \le \frac12 \alpha_*^{-1} \CP \REY \norm{\fv}{0}^2 
   + \frac12 \alpha_* \CP^{-1} \REY^{-1} \norm{\Piz1 \uvh}{} ^2
\le \frac12 \alpha_*^{-1} \CP \REY \norm{\fv}{0}^2 + \frac12 \alpha_* \REY^{-1} \snorm{\uvh}{1}^2\\
& \le \frac12\alpha_*^{-1} \CP \REY \norm{\fv}{0}^2 + \frac12\REY^{-1} \ash(\uvh,\uvh).
\end{split}
\]
Next, we absorb the second term on the right-hand side in the
left-hand side of the first bound.  The assertion follows by
integrating in time the first bound between~$0$ and any time~$0<t\le
T$, multiplying by two both sides, and taking the maximum over the
time~$t$.
\end{proof}

Existence and uniqueness of a solution to the semi-discrete problem~\eqref{eq:VEM:weak}
follows from the standard ODE theory in finite dimensions.
Here, we only sketch the proof.
First, we eliminate the pressure variable and
equation~$\eqref{eq:VEM:weak:D}$ by restricting the (test and trial)
velocity space to divergence-free functions.
Next, \eqref{eq:VEM:weak:B} at the initial time~$t=0$ together with
the definition of~$\jvh$ allows us to define an initial condition
$\Ev_{h,0}$ for the electric field, depending on the initial
conditions~$\uv_{\hh,0}$ and~$\Bv_{\hh,0}$ of the velocity and
magnetic fields.
Then, we can substitute~$\eqref{eq:VEM:weak:B}$ with the same equation
derived in time:
\[
\scalEdge{\jvht}{\Fvh}  - \REM^{-1}\scalFace{\Bvht}{\curl\Fvh} = 0 \qquad\forall\Fvh\in\VvhEdge.
\]
Recalling the definition of~$\jvh$, deriving in time once, and applying~\eqref{eq:VEM:weak:C}, we
obtain
\begin{equation}\label{new:L:timeeq}
  \scalEdge{{\Ev}_{h,t}}{\Fvh} +  \REM^{-1} \scalFace{\curl\Evh}{\curl\Fvh} 
  + \scalEdge{\partial_t \chih(\vvh, \Bvh)}{\Fvh}
  = 0 
\qquad \forall \Fvh \in \VvhEdge.
\end{equation}
Substituting~\eqref{eq:VEM:weak:B} with~\eqref{new:L:timeeq} and recalling that~${\bf E}_{h,0}$ is known
yield an equivalent system,
which is a first order Cauchy problem
on the finite-dimensional variable~$(\uvh,\Bvh,\Evh)$:
for all~$\vvh$ in~$\Wvh$, $\Fvh$ in~$\VvhEdge$, and~$\Cvh$ in~$\VvhFace$,
\begin{equation} \label{rewritten-system}
\begin{cases}
\msh(\uvht,\vvh) + \REY^{-1}\ash(\uvh,\vvh) + \cht(\uvh;\uvh,\vvh) + \scalEdge{\jvh}{\chih(\vvh,\Bvh)}
																																							&= (\fv,\Piz1\vvh) ,           \\[0.25em]
 \scalEdge{{\Ev}_{h,t}}{\Fvh} +  \REM^{-1} \scalFace{\curl\Evh}{\curl\Fvh}   + \scalEdge{\partial_t \chih(\vvh, \Bvh)}{\Fvh}  &= 0 , \\[0.25em]
\scalFace{\Bvht}{\Cvh} + \scalFace{\curl\Evh}{\Cvh}  																							&= 0 .
\end{cases}
\end{equation}
The term
\begin{align*}
\scalEdge{\partial_t \chih(\vvh, \Bvh)}{\Fvh} = 
\scalEdge{\PizP0 \uvht \times \PizP0 \Bvh}{\Fvh}
+ \scalEdge{\PizP0 \uvh \times \PizP0 \Bvht}{\Fvh}
\end{align*}
can be interpreted as a quartic function of~$(\uvh,\Bvh,\Evh)$.
In fact, using the algebraic form of the first and third equation in~\eqref{rewritten-system},
and inverting the corresponding ``mass'' matrices,
give that~$\uvht$ can be interpreted as a cubic function of~$(\uvh,\Bvh,\Evh)$
and~$\Bvht$ as a linear function of~$\Evh$.
This implies that~$\PizP0 \uvht \times \PizP0 \Bvh$ and~$\PizP0 \uvh \times \PizP0 \Bvht$
can be interpreted as a quartic and quadratic functions of~$(\uvh,\Bvh,\Evh)$.

Consequently, \eqref{rewritten-system} is a first order Cauchy problem in~$(\uvh, \Bvh, \Evh)$ with quartic right-hand side.
Proposition~\ref{proposition:stability-discrete} asserts that the solutions to the system (if they exist) cannot blow up in finite time.
Thus, the nonlinearity on the right-hand side is uniformly Lipschitz.
%%%
For finite times, existence and uniqueness follow from the standard ODE theory in finite dimension;
see, e.g., \cite[Section~$9.2$]{Layton-book}.

%%%%%%%%%%%%%%%%%%%%%%%%%%%%%%%%%%%%%%%

\section{Convergence analysis of the virtual element approximation}
\label{section:VEM:analysis}

Introduce the ``discrete errors''
\begin{align}
  \evhu:=\uvh-\uvI,\qquad
  \evhE:=\Evh-\EvI,\qquad
  \evhB:=\Bvh-\BvII,
  \label{eq:convenient:notation:1}
\end{align}
and the virtual element edge fields
\begin{align}
  \jvI &= \EvI + \chih(\uvI, \Bvh)
  \label{eq:convenient:notation:1.5}
\end{align}
and
\begin{equation} \label{eq:convenient:notation:2}
\evhJ
= \jvh - \jvI
= \big(\Evh + \chih(\uvh, \Bvh)\big) - \big(\EvI + \chih(\uvI, \Bvh)\big) 
= \evhE + \chih(\evhu,\Bvh).
\end{equation}
These quantities are bounded as in the following theorem, whose proof can be found in Section~\ref{subsection:proof-Theorem-5.1}.
\begin{thm} \label{thm:convergence:1}
Let $(\uv(t),\ps(t)$, $\Ev(t),\Bv(t))$ in
$\big[\HSzr{1}(\Omega)\big]^3\times
\LSzr{2}(\Omega)\times\HSzr{}(\curl,\Omega)\times\HSzr{}(\div,\Omega)$
for almost every $t\in(0,T)$ be the solution to the variational
formulation~\eqref{eq:continuous:problem-weak} under the assumptions
in Section~\ref{section:continuous-problem}.  Furthermore, assume
that the following regularity properties
\begin{align}
  \uv \in [W^{2,\infty}(\Omega)]^3, \qquad
  \uvt \in [H^{1}(\Omega)]^3, \qquad
  \Bv \in [H^1(\Omega) \cap L^{\infty}(\Omega)]^3 \quad\textrm{and}\quad
  \jv \in [L^{\infty}(\Omega)]^3
  \label{eq:Theorem1:regularity:assumptions}
\end{align}
hold uniformly in time.

Let $(\uvh(t),\psh(t),\Evh(t),\Bvh(t))$ in $\Wvh\times\Qsh\times\VvhEdge\times\VvhFace$ for every~$t \in (0,T)$ be the solution to the virtual element method~\eqref{eq:VEM:weak}.
Then, a positive constant~$\Cs$ exists that is independent of $\hh$ such that
\[
\norm{\evhu(t)}{} + \norm{\evhB(t)}{} + \left( \int_0^t \norm{\evhJ(s)}{}^2 \right)^{\frac12}
\leq \Cs\big( \norm{\evhu(0)}{} + \norm{\evhB(0)}{} + \hh \big) \qquad \text{for every } t\in (0,T],
\]
where $\evhu$, $\evhB$, and $\evhJ$ are defined in~\eqref{eq:convenient:notation:1}-\eqref{eq:convenient:notation:2}.
The constant~$\Cs$ depends on the parameters of the discretization,
the final time~$T$, and the regularity of the MHD solution fields $\uv(t),\Ev(t),\Bv(t)$, and $\Bvh(t)$.
\end{thm}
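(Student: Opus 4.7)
The plan is a standard energy argument tailored to the four-field MHD--VEM scheme, where the three equations are tested with an appropriate combination of the discrete errors themselves so that the nonlinear couplings cancel up to handleable remainders. First I would write down the four error equations: subtracting the VEM scheme~\eqref{eq:VEM:weak} from the continuous problem~\eqref{eq:continuous:problem-weak} tested at discrete functions, and inserting the interpolants $\uvI$, $\EvI$, $\BvII$, $\jvI$ defined in~\eqref{eq:EvI:intp}, \eqref{eq:BvI:intp}, \eqref{eq:convenient:notation:1.5}. The resulting residuals split into (a) interpolation residuals $\uv-\uvI$, $\Bv-\BvII$, $\Ev-\EvI$ bounded by Lemmas~\ref{lemma:uvI:intp}, \ref{lemma:intp:Ev}, \ref{lemma:intp:Bv}; (b) variational crimes $\msh-(\cdot,\cdot)$, $\ash-\as$, $\scalEdge{\cdot}{\cdot}-(\cdot,\cdot)$, $\scalFace{\cdot}{\cdot}-(\cdot,\cdot)$, each yielding $O(h)$ bounds by the consistency and stability properties~\eqref{consistency:velocity:H1}--\eqref{eq:velocity:H1:stability}, \eqref{eq:velocity:L2:consistency}--\eqref{eq:velocity:L2:stability}, \eqref{consistency:edge}--\eqref{stability:edge}, and analogously for the face form; (c) the convective variational crimes controlled by Lemmas~\ref{lemma:property-trilinear2} and~\ref{lemma:property-trilinear3}; (d) the Lorentz crime $(\jv\times\Bv,\vv)-\scalEdge{\jvh}{\chih(\vv,\Bv)}$; and (e) the load discrepancy $(\fv,\vv)-(\fv,\Piz{1}\vv)$, which is $O(h)$ by polynomial approximation and the regularity of $\fv$ implicit in assumption~\eqref{eq:Theorem1:regularity:assumptions}.

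Next, I would test equation~\eqref{eq:VEM:weak:A} with $\vvh=\evhu$, equation~\eqref{eq:VEM:weak:B} with $\Fvh=\evhE$, and equation~\eqref{eq:VEM:weak:C} with $\Cvh=\REM^{-1}\evhB$, and pick $\qsh$ in~\eqref{eq:VEM:weak:D} to absorb the pressure. Because $\div\uvI=0$ by~\eqref{uI-div}, the pressure term drops; because of the skew symmetry of $\ctilden$ and the identity~\eqref{note-that}, the convection reduces to a difference $\cht(\uv;\uv,\evhu)-\cht(\uvh;\uvh,\evhu)$ directly amenable to Lemma~\ref{lemma:property-trilinear3}. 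The critical cancellation is the coupling one: the Lorentz contribution from~\eqref{eq:VEM:weak:A} reads $\scalEdge{\jvh}{\chih(\evhu,\Bvh)}$, while testing~\eqref{eq:VEM:weak:B} with $\evhE$ and adding $\REM^{-1}$ times the magnetic equation yields a term $\scalEdge{\jvh}{\evhE}$ plus $\REM^{-1}\scalFace{\curl\Evh}{\evhB}-\REM^{-1}\scalFace{\Bvh}{\curl\evhE}$, the latter two cancelling via integration by parts on $\VvhEdge\times\VvhFace$. Exploiting~\eqref{eq:convenient:notation:2}, the two surviving coupling terms fuse into $\scalEdge{\jvh}{\evhJ}=\norm{\evhJ}{\VvhEdge}^2+\scalEdge{\jvI}{\evhJ}$, giving the desired positive dissipation on the left-hand side together with a manageable residual.

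At this point the inequality takes the schematic form
\[
\tfrac12\tfrac{d}{dt}\bigl(\norm{\evhu}{\msh}^2+\REM^{-1}\norm{\evhB}{\VvhFace}^2\bigr)
+\REY^{-1}\alpha_*\snorm{\evhu}{1}^2+\eta_*\norm{\evhJ}{}^2
\;\le\; \mathrm{RHS}(t),
\]
where $\mathrm{RHS}(t)$ contains the interpolation, consistency, trilinear, and Lorentz crimes. Each piece is estimated by Cauchy--Schwarz and Young's inequality, splitting an absorbable $\varepsilon\,\snorm{\evhu}{1}^2+\varepsilon\,\norm{\evhJ}{}^2$ part into the left-hand side and leaving only $\norm{\evhu}{}^2$, $\norm{\evhB}{}^2$ and pure $O(h^2)$ terms. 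Lemma~\ref{lemma:property-trilinear3} is tailored precisely to produce this decomposition for the nonlinear convection; the regularity hypotheses~\eqref{eq:Theorem1:regularity:assumptions} on $\uv$, $\Bv$, $\jv$ are what make the remaining terms (in particular the Lorentz crime, which needs $\Bv,\jv\in L^\infty$ to control the difference between $\jv\times\Bv$ and $\PizP{0}\uv\times\PizP{0}\Bv$) uniformly bounded in time. Integrating in time from $0$ to $t\le T$ and applying the integral Gronwall inequality yields the bound stated in the theorem, with a constant depending on $T$ and on the norms of the continuous solution that appear through the hypotheses.

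The main obstacle will be the Lorentz-coupling identity: one must show that the algebraic combination of the three tested equations leaves only $\norm{\evhJ}{\VvhEdge}^2$ up to controllable residuals, and then bound the remainder $\scalEdge{\jv\times\Bv-\jvI-\chih(\uvI,\Bvh)}{\cdot}$ by the same $O(h)$-type estimate used for the other variational crimes. This requires carefully inserting $\PizP{0}\uv\times\PizP{0}\Bv$ as an intermediate quantity and using both the $L^\infty$-regularity of $\Bv$ and $\jv$ and the boundedness of $\PizP{0}$ on $L^\infty$ established in Lemma~\ref{lemma:chi:error}; the rest of the argument, although notationally heavy, is structurally the standard Gronwall closure used in time-dependent Navier--Stokes VEM analyses.
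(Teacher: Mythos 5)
Your proposal follows essentially the same route as the paper's proof: test the three equations with $\evhu$, $\evhE$, $\evhB$, use $\div\uvI=0$ to drop the pressure, exploit the symmetry of $\scalFace{\cdot}{\cdot}$ so that the two curl terms cancel and the Lorentz contributions fuse into $\scalEdge{\jvh}{\evhJ}=\norm{\evhJ}{\VvhEdge}^2+\scalEdge{\jvI}{\evhJ}$, control the residuals via the interpolation/consistency lemmas and Lemmas~\ref{lemma:property-trilinear2}--\ref{lemma:property-trilinear3}, and close with Young plus Gr\"onwall. The only cosmetic difference is that the cancellation of the curl terms is just the symmetry of the discrete face inner product rather than a genuine integration by parts, and the paper handles the $\scalEdge{\jvI}{\evhJ}$ residual by comparing $\jvI$ with $\jv$ through the continuous Ohm's law exactly as you anticipate.
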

In view of Theorem~\ref{thm:convergence:1}, we have the following convergence result.
\begin{thm}\label{thm:convergence:2}
Let $(\uv(t),\ps(t)$, $\Ev(t),\Bv(t))$ in $\big[\HSzr{1}(\Omega)\big]^3\times \LSzr{2}(\Omega)\times\HSzr{}(\curl,\Omega)\times\HSzr{}(\div,\Omega)$ for almost every~$t\in(0,T)$
be the solution to the variational formulation~\eqref{eq:continuous:problem-weak} under the assumptions in Section~\ref{section:continuous-problem}.
Let the regularity assumptions of Theorem~\ref{thm:convergence:1} be valid
and assume that the initial conditions~$\uv_0$, $\Bv_0$, and~$\Ev_0$ of the fields~$\uv$, $\Bv$, and~$\Ev$ belong to~$[H^1(\Omega)]^3$.
Let $(\uvh(t),\psh(t),\Evh(t),\Bvh(t))$ in $\Wvh\times\Qsh\times\VvhEdge\times\VvhFace$ for every~$t\in(0,T)$ be the solution to the virtual element method~\eqref{eq:VEM:weak}.
Then, a positive constant~$\Cs$ exists, independent of~$\hh$, such that
\begin{equation} \label{cane}
\norm{\uv(t)-\uvh(t)}{} + \norm{\Bv(t)-\Bvh(t)}{} + \left(\int_0^t \norm{\Ev(s)-\Evh(s)}{}^2 \right)^{\frac12} \le \Cs \hh
\end{equation}
for almost every $t\in(0,T]$.
The constant~$\Cs$ depends on the parameters of the discretization,
the final time~$T$, and the regularity of the MHD solution fields
$\uv(t),\Ev(t),\Bv(t)$, and $\Bvh(t)$.
\end{thm}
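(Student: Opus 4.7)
The proof proceeds by combining Theorem~\ref{thm:convergence:1} with the interpolation estimates established in Sections~\ref{section:VEM:Navier-Stokes} and~\ref{section:VEM:Maxwell}. The plan is to write, via the triangle inequality,
\begin{equation*}
\uv-\uvh=(\uv-\uvI)-\evhu,\qquad \Bv-\Bvh=(\Bv-\BvII)-\evhB,
\end{equation*}
so that the interpolation contributions are controlled by Lemmas~\ref{lemma:uvI:intp} and~\ref{lemma:intp:Bv}, yielding $O(\hh)$ bounds under the assumed regularity, while the discrete errors $\evhu$ and $\evhB$ are controlled by Theorem~\ref{thm:convergence:1}.

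One first needs to check that the initial contributions appearing on the right-hand side of the bound in Theorem~\ref{thm:convergence:1} are themselves of order $\hh$. For the velocity this is immediate, as $\uv_{\hh,0}$ is by construction the degrees-of-freedom interpolant of $\uv_0$, so $\uv_{\hh,0}=\uvI(0)$ and $\evhu(0)=\zerov$. For the magnetic field, $\BvII(0)$ and $\Bv_{\hh,0}$ are two different interpolants of the same datum $\Bv_0\in[\HS{1}(\Omega)]^3$; a triangle inequality combined with~\eqref{interpolation:dofi-face} and Lemma~\ref{lemma:intp:Bv} then yields $\norm{\evhB(0)}{}\leq\Cs\,\hh\,\norm{\Bv_0}{1}$.

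For the electric field, the plan is to exploit the identity~\eqref{eq:convenient:notation:2}, rewritten as
\begin{equation*}
\Ev-\Evh=(\Ev-\EvI)-\evhJ+\chih(\evhu,\Bvh),
\end{equation*}
which, after squaring and integrating in time, gives
\begin{equation*}
\int_0^t\norm{\Ev-\Evh}{}^2\leq\Cs\Big(\int_0^t\norm{\Ev-\EvI}{}^2+\int_0^t\norm{\evhJ}{}^2+\int_0^t\norm{\chih(\evhu,\Bvh)}{}^2\Big).
\end{equation*}
The first term on the right-hand side is $O(\hh^2)$ by Lemma~\ref{lemma:intp:Ev}, the second is $O(\hh^2)$ by Theorem~\ref{thm:convergence:1}, and the third is the critical cross-product contribution.

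The hardest step will be the bound on this last term. Using definition~\eqref{choice:chih} and an element-wise H\"older inequality, one gets
\begin{equation*}
\norm{\chih(\evhu,\Bvh)}{0,\E}\leq\norm{\PizP{0}\Bvh}{\LS{\infty}(\E)}\norm{\evhu}{0,\E},
\end{equation*}
so that the time-integral contribution is controlled by $\big(\sup_{s\in[0,t]}\max_{\E\in\Th}\norm{\PizP{0}\Bvh(s)}{\LS{\infty}(\E)}\big)^2\int_0^t\norm{\evhu}{}^2$. The genuinely delicate task is thus to establish a uniform-in-$\hh$ bound on $\max_{\E}\norm{\PizP{0}\Bvh}{\LS{\infty}(\E)}$; I would do this by splitting $\Bvh=\BvII+\evhB$, leveraging the $\LS{\infty}$-regularity of $\Bv$ from~\eqref{eq:Theorem1:regularity:assumptions} and the stability of the face interpolant $\BvII$, in the spirit of Lemma~\ref{lemma:chi:error}, while absorbing the residual dependence on the discrete solution into the generic constant $\Cs$, consistently with the statement of the theorem. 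Collecting all the bounds then gives~\eqref{cane}.
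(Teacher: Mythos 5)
Your overall architecture coincides with the paper's: the same triangle-inequality splitting into interpolation errors and discrete errors $\evhu,\evhB,\evhE$, the same observation that $\evhu(0)=\zerov$ while $\norm{\evhB(0)}{}\lesssim\hh$ follows from combining~\eqref{interpolation:dofi-face} with Lemma~\ref{lemma:intp:Bv}, and the same identity $\evhE=\evhJ-\chih(\evhu,\Bvh)$ to reduce the electric-field error to $\evhJ$ (controlled by Theorem~\ref{thm:convergence:1}) plus the cross-product term. Up to that point the proposal matches the paper's proof step for step.

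The divergence, and the gap, is precisely in the step you flag as delicate. You bound $\norm{\chih(\evhu,\Bvh)}{0,\E}\le\norm{\PizP{0}\Bvh}{\LS{\infty}(\E)}\,\norm{\evhu}{0,\E}$ and then need $\max_{\E\in\Th}\norm{\PizP{0}\Bvh}{\LS{\infty}(\E)}$ bounded uniformly in $\hh$. The route you sketch cannot deliver this: writing $\Bvh=\BvII+\evhB$ and applying an inverse estimate to the discrete part gives $\norm{\PizP{0}\evhB}{\LS{\infty}(\E)}\le\Cinv\hP^{-3/2}\norm{\evhB}{0,\E}$, and since Theorem~\ref{thm:convergence:1} only supplies the \emph{global} bound $\norm{\evhB}{}\lesssim\hh$, the best available conclusion is $O(\hh^{-1/2})$, which blows up; the argument of Lemma~\ref{lemma:chi:error} does not transfer because Lemma~\ref{lemma:intp:Bv} is likewise a global rather than elementwise estimate, so even the $\BvII$ contribution loses the same $\hh^{-1/2}$. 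Declaring that this is ``absorbed into $\Cs$'' therefore amounts to assuming an unproven discrete maximum-norm bound on $\Bvh$. The paper avoids the pointwise bound altogether: it distributes the norms the other way, estimating the time-integrated cross-product contribution by the product $\norm{\evhu}{\LS{\infty}(0,t;L^2(\Omega))}\,\norm{\Bvh}{\LS{\infty}(0,t;\VvhFace)}$, where the first factor is $O(\hh)$ by Theorem~\ref{thm:convergence:1} and the second is bounded uniformly by the a priori stability estimate on $\scalFace{\Bvh}{\Bvh}$ in Proposition~\ref{proposition:stability-discrete}. (The paper is admittedly terse at exactly this point, but the key structural difference is that it leans on the provable energy stability of $\Bvh$ rather than on an unavailable $\LS{\infty}$ bound.) Replacing your spatial $\LS{\infty}$ argument by this pairing closes the gap; the rest of your proof stands.
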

\begin{proof}
We add and subtract~$\uvI$ (see Lemma~\ref{lemma:uvI:intp}), $\EvII$ (see~\eqref{eq:EvI:intp}), and~$\BvII$ (see~\eqref{eq:BvI:intp}) to the left-hand side of~\eqref{cane} and get
\[
\begin{split}
& \norm{\uv(t) - \uvh(t)}{} + \norm{\Bv(t) - \Bvh(t)}{} + \left( \int_0^t \norm{\Ev(s) - \Evh(s)}{}^2 \right)^{\frac12}\\
& \le \left[ \norm{\uv(t) - \uvI(t)}{} + \norm{\Bv(t) - \BvII(t)}{} + \left(\int_0^t \norm{\Ev(s) - \EvII(s)}{}^2 \right)^{\frac12} \right] \\
& \quad     + \left[ \norm{\evhu(t)}{} + \norm{\evhB(t)}{} + \left( \int_0^t \norm{\evhE(s)}{}^2 \right)^{\frac12} \right] := S_1+S_2.
\end{split}
\]
An upper bound on the term~$S_1$ can be shown using the interpolation properties
of Lemmas~\ref{lemma:uvI:intp}, \ref{lemma:intp:Bv}, and~\ref{lemma:intp:Ev}.

Observe that~$\evhu(0)=0$.
Furthermore, the triangle inequality, \eqref{interpolation:dofi-face}, and Lemma~\ref{lemma:intp:Bv} give
\[
\norm{\evhB(0)}{}
\le \norm{\Bv(0) - \Bv_{\hh,0}}{} + \norm{\Bv(0)-\BvII(0)}{}
\lesssim \hh.
\]
With this at hand, Theorem~\ref{thm:convergence:1} allows us to show a direct bound on the terms involving the velocity and magnetic fields appearing in~$S_2$.
For the term involving the electric field we proceed as follows:
using the triangle inequality, the definition of~$\chih$ in~\eqref{choice:chih},
the bound on~$\norm{\evhu}{}$ from~\eqref{eq:Theorem1:regularity:assumptions},
the stability estimates on the discrete face element bilinear forms~\eqref{stability:face},
and the bound on~$\scalFace{\Bvh}{\Bvh}$ in Proposition~\ref{proposition:stability-discrete},
we arrive at
\[
\begin{split}
& \left(\int_0^t \norm{\evhE(s)}{}^2 \right)^{\frac12}
\lesssim
\left( \int_0^t \left(\norm{\evhJ(s)}{}^2 + \norm{\chih(\evhu(s), \Bvh(s))}{}^2 \right) \right)^{\frac12}\\
& \quad \le
\hh +  \left(\int_0^t \norm{\evhu(s)}{}^2 \norm{\Bvh(s)}{}^2 \right)^{\frac12}
  \lesssim
\hh + \norm{\evhu}{\LS{\infty}(0,t; L^2(\Omega))} \norm{\Bvh}{\LS{\infty}(0,t;\VvhFace)} 
\lesssim  \hh.
\end{split}
\]
The final hidden constant depends on the square root of the instant time~$t$.
\end{proof}

%%%%%%
\subsection{Proof of Theorem~\ref{thm:convergence:1}} \label{subsection:proof-Theorem-5.1}
%%%%%%
\begin{proof}
Throughout, for presentation's sake, we assume that~$\REM=1$ and~$\REY=1$. The general assertion can be proved by means of minor modifications
but note that the final constant in Theorem~\ref{thm:convergence:1} will depend on~$\REM$ and~$\REY$; treating convection-dominated cases is beyond the scope of this work.\medskip

First, we add and subtract~$\EvI$ and~$\chih(\uvI,\Bvh)$ to~$\jvh=\Evh+\chih(\uvh,\Bvh)$ so that
  \begin{align}
    \jvh
    = \Evh + \chih(\uvh,\Bvh)
    = (\Evh-\EvI) + \chih(\uvh-\uvI,\Bvh) + \EvI + \chih(\uvI,\Bvh),
    \label{eq:jvh:00}
  \end{align}
where~$\EvI$ and~$\uvI$ are defined in~\eqref{eq:EvI:intp} and Section~\ref{subsubsection:interpolation-velocity} (see also Lemma~\ref{lemma:uvI:intp}), respectively.
  We substitute~\eqref{eq:jvh:00} in~\eqref{eq:VEM:weak:A}, add and
  subtract $\uvI$, and find that
  \begin{align}
    &\msh\big( (\uvh-\uvI)_t,\vvh \big) + \ash(\uvh-\uvI,\vvh) + \bs(\vvh,\psh)
    \nonumber\\[0.5em]
    &\hspace{6cm}
    + \scalEdge{\Evh-\EvI + \chih(\uvh-\uvI)}{ \chih(\vvh,\Bvh) }
    \nonumber\\[0.5em]
    &\qquad
    = (\fv,\Piz1 \vvh)
    - \msh( \uvIt,\vvh \big) - \ash(\uvI,\vvh)
    - \cht(\uvh;\uvh,\vvh)
    \nonumber\\[0.5em]
    &\qquad\phantom{=}
    -\scalEdge{ \EvI + \chih(\uvI,\Bvh) }{ \chih(\vvh,\Bvh) }.
    \label{eq:proof:00}
  \end{align}
Similarly, we use~\eqref{eq:jvh:00} in~\eqref{eq:VEM:weak:B}, add and subtract~$\BvII$ defined in~\eqref{eq:BvI:intp}, and find that
\begin{multline}    \label{eq:proof:10}
\scalEdge{ \Evh-\EvI + \chih(\uvh-\uvI,\Bvh) }{ \Fvh } - \scalFace{ \Bvh-\BvII }{ \curl\Fvh } \\[0.5em]
= - \scalEdge{\EvI + \chih(\uvI,\Bvh)}{ \Fvh } + \scalFace{ \BvII }{ \curl\Fvh }.
\end{multline}
Then, we use Lemma~\ref{lem:commuting-diagram}, \eqref{eq:BvI:intp} twice and~\eqref{eq:MHD:weak:C}, and find that,
for all~$\Cvh \in \VvhtFace$,
  \begin{align*}
    \scalFace{\curl\EvI}{\Cvh}
    = \scalFace{(\curl\Ev)_{\INTPINTP}}{\Cvh}
    =  ( \curl\Ev, \Cvh)
    = -( \Bvt,     \Cvh)
    = -\scalFace{(\Bvt)_{\INTPINTP}}{\Cvh}.
  \end{align*}
  Since the derivation in time and the interpolation commute, e.g., $(\Bvt)_{\INTPINTP}=\BvIIt$, we find
  \begin{align*}
    \scalFace{\BvIIt}{\Cvh} + \scalFace{\curl\EvI}{\Cvh} = 0.
  \end{align*}
Finally, we subtract this equation to~\eqref{eq:VEM:weak:C} and obtain,
for all~$\Cvh \in \VvhtFace$,
\begin{align}
  \scalFace{(\Bvh-\BvII)_t}{\Cvh} + \scalFace{\curl(\Evh-\EvI)}{\Cvh}
  = 0.
  \label{eq:proof:15}
\end{align}

\medskip
\noindent
Next, we take $\vvh=\evhu$ in~\eqref{eq:proof:00}, $\Fvh=\evhE$
in~\eqref{eq:proof:10}, and $\Cvh=\evhB$ in~\eqref{eq:proof:15}.
This choice of~$\Cvh$ is admissible since~$\div\evhB=0$ for every $t\geq0$ thanks to Remark~\ref{remark:zero-divergence-discrete} and the fact that~$\BvII$
has zero divergence by definition.
We further observe~$\bs(\evhu,\psh)=0$ since
\begin{align*}
    \evhu\in\Zvh:=\big\{\vvh\in\Wvh\,\mid\,(\div\vvh,\qsh)=0\quad\forall\qsh\in\Qsh\big\};
\end{align*}
see also~\eqref{uI-div}.
Equations~\eqref{eq:proof:00}, \eqref{eq:proof:10}, and~\eqref{eq:proof:15} thus become
  \begin{align}
    &\msh\big( \evhut, \evhu \big)
    + \ash(\evhu,\evhu)
    + \scalEdge{ \evhE + \chih(\evhu,\Bvh) }{ \chih(\evhu,\Bvh) }
    \nonumber\\[0.5em] &\qquad
    = (\fv,\Piz1 \evhu)
    - \msh( \uvIt, \evhu \big)
    - \ash(\uvI,\evhu)
    - \cht(\uvh;\uvh,\evhu)
    \nonumber\\[0.5em] &\qquad\phantom{=}
    -\scalEdge{ \EvI + \chih(\uvI,\Bvh) }{ \chih(\evhu,\Bvh) };
    \label{eq:proof:20a}
    \\[0.5em] &
    \scalEdge{ \evhE + \chih(\evhu,\Bvh) }{ \evhE }
    - \scalFace{ \evhB }{ \curl\evhE }
    = - \scalEdge{ \big( \EvI + \chih(\uvI,\Bvh) \big) }{ \evhE }
    \nonumber\\[0.5em] &
    \phantom{
      \scalEdge{ \evhE + \chih(\evhu,\Bvh) }{ \evhE }
      - \scalFace{ \evhB }{ \curl\evhE } =
    }
    + \scalFace{ \BvII }{ \curl\evhE };
    \label{eq:proof:20b}
    \\[0.5em]
    &\scalFace{\evhBt}{\evhB} + \scalFace{\curl\evhE}{\evhB} = 0.
    \label{eq:proof:20c}
  \end{align}
  We add~\eqref{eq:proof:20c} to~\eqref{eq:proof:20b}, so that
  \begin{align*}
    \scalEdge{ \evhE + \chih(\evhu,\Bvh) }{ \evhE }
    + \scalFace{\evhBt}{\evhB}
    &= - \scalEdge{ \big( \EvI + \chih(\uvI,\Bvh) \big) }{ \evhE }
    \nonumber\\[0.5em] &\phantom{=\,}
    + \scalFace{ \BvII }{ \curl\evhE }.
  \end{align*}
Then, we add this equation to~\eqref{eq:proof:20a} and write the resulting equation as
  \begin{align}
    \LHS
    &:=
    \msh(\evhut,\evhu\big)
    + \ash(\evhu,\evhu)
    + \scalFace{\evhBt}{\evhB}
    \nonumber
    + \scalEdge{\evhE+\chih(\evhu,\Bvh)}{\evhE + \chih(\evhu,\Bvh)}
    \nonumber\\[0.5em] &
    = (\fv,\Piz1 \evhu)
    -\msh( \uvIt,\evhu \big)
    - \ash(\uvI,\evhu)
    - \cht(\uvh;\uvh,\evhu)
    \nonumber\\[0.5em] &
    \phantom{:=}
    -\scalEdge{\EvI+\chih(\uvI,\Bvh)}{\chih(\evhu, \Bvh)}
    \nonumber\\[0.5em] &
    \phantom{:=}
    - \scalEdge{\big(\EvI+\chih(\uvI,\Bvh)}{\evhE}
    + \scalFace{\BvII}{\curl\evhE}
    =: \RHS.
    \label{eq:LHS=RHS}
  \end{align}
We reformulate $\LHS$ and $\RHS$ in~\eqref{eq:LHS=RHS} as
\begin{equation} \label{LHS-RHS}
\begin{split}
\LHS
&= \frac{1}{2}\frac{d}{dt}\msh(\evhu,\evhu\big)
      + \frac{1}{2}  \frac{d}{dt}\scalFace{\evhB}{\evhB}
      + \ash(\evhu,\evhu) + \scalEdge{\evhJ}{\evhJ},\\[0.5em]
\RHS
&=  (\fv,\Piz1 \evhu)
      - \msh( \uvIt,\evhu \big) - \ash(\uvI,\evhu)
      - \cht(\uvh;\uvh,\evhu) -\scalEdge{\jvI}{\evhJ}\\[0.5em]
&\phantom{=} + \scalFace{\BvII}{\curl\evhE}.
\end{split}
\end{equation}
Next, we observe that
\begin{itemize}
\item equation~\eqref{eq:MHD:weak:A} with $\vv=\evhu \in \big[\HSzr{1}(\Omega)\big]^3$ can be rewritten as
\begin{align}  \label{eq:proof:30}
    (\fv,\evhu) = (\uvt,\evhu) + \as(\uv,\evhu) + \cst(\uv;\uv,\evhu) + (\jv,\evhu\times\Bv),
\end{align}
since $\bs(\evhu,\ps)=0$ as $\div\evhu=0$ and $\cst(\uv;\uv,\evhu)=\cs(\uv;\uv,\evhu)$ since~$\div\uv=0$ and $\uv\in\big[\HSzr{1}(\Omega)\big]$;
\item using the definition of~$\BvII$ in~\eqref{eq:BvI:intp}, and~\eqref{eq:MHD:weak:B} with    $\Fv=\evhE\in\HSzr{}(\curl,\Omega)$ yields
\begin{align} \label{eq:proof:35}
\scalFace{\BvII}{\curl\evhE} = (\Bv , \curl\evhE) = (\jv,\evhE).
\end{align}
%   \item the definition of $\uvI$
%     in~\eqref{eq:uvI:intp:A}-\eqref{eq:uvI:intp:B}
%     and~\eqref{eq:VEM:weak:D} imply that there exists a scalar
%     function~$\ssh\in\LSzr{2}(\Th)$ such that
%     \begin{align}
%       \as(\uv,\evhu) - \ash(\uvI,\evhu)
%       = \as(\uv,\evhu) - \as(\uv,\evhu) + \bs(\evhu,\ssh)
%       = \bs(\evhu,\ssh) = 0.
%       \label{eq:proof:40}
%     \end{align}
  \end{itemize}
We substitute~\eqref{eq:proof:30} and~\eqref{eq:proof:35} in~$\RHS$ defined in~\eqref{LHS-RHS}, add and subtract $(\jv,\evhu\times\Bvh)$, 
use~\eqref{eq:convenient:notation:1}, and finally obtain
\begin{align}
\RHS
&:= (\fv,\Piz1 \evhu) - \msh( \uvIt,\evhu \big) - \ash(\uvI,\evhu)
    - \cht(\uvh;\uvh,\evhu)- \scalEdge{\jvI}{\evhJ} + (\jv,\evhE)    \nonumber\\[0.5em]
&= \Big[ (\uvt,\evhu) - \msh( \uvIt,\evhu \big) \Big]
    + \Big[ \as(\uv,\evhu) - \ash(\uvI,\evhu) \Big]
    + \Big[ \cst(\uv;\uv,\evhu) - \cht(\uvh;\uvh,\evhu) \Big] \nonumber\\[0.5em]
&\phantom{:=} + (\jv,\evhu\times\Bv) -\scalEdge{\jvI}{\evhJ} + (\jv,\evhE)
+ (\fv,\Piz1 \evhu) - (\fv, \evhu)  \nonumber\\[0.5em]
&= \Big[ (\uvt,\evhu) - \msh( \uvIt,\evhu \big) \Big]
   + \Big[ \cst(\uv;\uv,\evhu) - \cht(\uvh;\uvh,\evhu) \Big]
   + (\jv,\evhE+\evhu\times\Bv)\nonumber\\[0.5em]
& \phantom{:=} - \scalEdge{\jvI}{\evhJ} + \Big[ \as(\uv,\evhu) - \ash(\uvI,\evhu) \Big]
+ \Big[(\fv,\Piz1 \evhu) - (\fv, \evhu)\Big ] \nonumber\\[0.5em]
&= \Big[ (\uvt,\evhu) - \msh( \uvIt,\evhu \big) \Big]
+ \Big[ \cst(\uv;\uv,\evhu) - \cht(\uvh;\uvh,\evhu) \Big]
+ \Big[ \big(\jv,\evhu\times(\Bv-\Bvh)\big) \Big] \nonumber\\[0.5em]&\phantom{=}
+ \Big[ \big(\jv,\evhE+\evhu\times\Bvh\big) - \scalEdge{\jvI}{\evhJ} \Big]  + \Big[ \as(\uv,\evhu) - \ash(\uvI,\evhu) \Big]
+ \Big[(\fv,\Piz1 \evhu) - (\fv, \evhu)\Big ] \nonumber\\[0.5em]
&=: \TERM{T}{1} + \TERM{T}{2} + \TERM{T}{3} + \TERM{T}{4}+ \TERM{T}{5} + \TERM{T}{6}.
\label{eq:RHS:T-terms}
\end{align}
We estimate the six terms~$\TERM{T}{j}$, $j=1,\dots,6$, separately,
and we eventually apply Gr\" onwall's inequality to derive the assertion of the theorem.
  
\paragraph*{Estimate of $\TERM{T}{1}$}
We split $\TERM{T}{1}$ into a summation on the elemental contributions; 
note that $\uvIt=\uvtI$;
use the consistency property~\eqref{eq:velocity:L2:consistency} with the piecewise linear discontinuous polynomial approximation~$\uv_{t,\pi}$ of $\uvt$;
use the stability property~\eqref{eq:velocity:L2:stability}, the Cauchy-Schwarz inequality, and the Young inequality, and obtain:
\begin{align*}
\TERM{T}{1}
& = \sum_{\P\in\Th} \ABS{ \big(\uvt,\evhu\big)_{\P} -\mshP(\uvIt,\evhu) }
 = \sum_{\P\in\Th} \ABS{ \big(\uvt-\uv_{t,\pi},\evhu\big)_{\P} -\mshP(\uvtI-\uv_{t,\pi},\evhu) } \nonumber\\[0.5em] &
 \leq \sum_{\P\in\Th} \big(\norm{\uvt-\uv_{t,\pi}}{\P} + \mu^*\norm{\uvtI-\uv_{t,\pi}}{\P}\big)\,\norm{\evhu}{\P}
 \leq \big(\norm{\uvt-\uv_{t,\pi}}{} + \mu^*\norm{\uvtI-\uv_{t,\pi}}{}\big)\,\norm{\evhu}{} \nonumber\\[0.5em]
& \leq \big((1+\mu^*)\norm{\uvt-\uv_{t,\pi}}{} + \mu^*\norm{\uvt - \uvtI}{}\big)\,\norm{\evhu}{} \nonumber\\[0.5em] &
\leq \frac{1}{2\varepsilon}\Big(\big(1+\mu^*\big)^2\norm{\uvt-\uv_{t,\pi}}{}^2 + (\mu^*)^2\norm{\uvt-\uvtI}{}^2\Big) + \varepsilon\norm{\evhu}{}^2 \nonumber\\[0.5em] &
\leq \hh^2\,\Cnst{\TERM{T}{1}}(\varepsilon) + \varepsilon\norm{\evhu}{}^2,
\end{align*}
with $\Cnst{\TERM{T}{1}}(\varepsilon) := \big(\Cpi^2(1+\mu^*)^2 + \Cintp^2(\mu^*)^2 \big)\snorm{\uvt}{1}^2\slash{2\varepsilon}$.

\paragraph*{Estimate of $\TERM{T}{2}$}
We add and subtract $ \cht(\uv;\uv,\evhu)$, use the triangle inequality, and write
\begin{align*}
\TERM{T}{2}
&=   \ABS{\cst(\uv;\uv,\evhu) - \cht(\uvh;\uvh,\evhu)}
   \leq \ABS{\cst(\uv;\uv,\evhu) - \cht(\uv;\uv,\evhu)} + \ABS{\cht(\uv;\uv,\evhu) - \cht(\uvh;\uvh,\evhu)}
   \nonumber\\[0.5em]
&=: \TERM{T}{2,1} + \TERM{T}{2,2}.
  \end{align*}
We control $\TERM{T}{2,1}$ by using Lemma~\ref{lemma:property-trilinear2} and the Young inequality:
\begin{align*}
\TERM{T}{2,1}
\leq \CC \hh  \norm{\uv}{2}^2 \, \snorm{\evhu}{1}
\leq \frac{\CC^2}{4\varepsilon} \hh^2\norm{\uv}{2}^4 + \varepsilon\snorm{\evhu}{1}^2,
\end{align*}
where the constant~$\CC$ is independent of~$\hh$.
Similarly, we control $\TERM{T}{2,2}$ using Lemma~\ref{lemma:property-trilinear3}:
\begin{align*}
\TERM{T}{2,2}
& \leq  \varepsilon\snorm{\evhu}{1}^2
+ \Big(1+\frac{1}{\varepsilon}\Big) \Cs_1 R_1(\uv) \norm{\evhu}{}^2
 + \Big(1+\frac{1}{\varepsilon}\Big)\Cs_2 R_2(\uv) \hh^2,
  \end{align*}
where the constants $\Cs_1$ and $\Cs_2$ are independent of $\hh$.
We end up with
\[
\TERM{T}{2}
\leq \hh^2\,C_{\TERM{T}{2}}(\varepsilon)
+ \Big(1+\frac{1}{\varepsilon}\Big)\Cs_1 R_1(\uv) \norm{\evhu}{}^2 + 2\varepsilon\snorm{\evhu}{1}^2,
\]  
where
\[
C_{\TERM{T}{2}}(\varepsilon)
:= \frac{\CC^2}{4\varepsilon} \norm{\uv}{2}^4  + \Big(1+\frac{1}{\varepsilon}\Big)\Cs_2 R_2(\uv).
\]
\paragraph*{Estimate of $\TERM{T}{3}$}
By adding and subtracting~$\BvII$ defined in~\eqref{eq:BvI:intp}, we obtain
\begin{align*}
    \TERM{T}{3} &
    = \big(\jv,\evhu\times(\Bv-\Bvh)\big)
    = \big(\jv,\evhu\times(\Bv-\BvII)\big) + \big(\jv,\evhu\times(\BvII-\Bvh)\big)
    =: \TERM{T}{3,1} + \TERM{T}{3,2}.
\end{align*}
We estimate the two terms~$\TERM{T}{3,1}$ and~$\TERM{T}{3,2}$ separately.
Notably, we write
\begin{align*}
\ABS{\TERM{T}{3,1}} 
&\leq \norm{\jv}{\LS{\infty}}\,\norm{\evhu}{}\,\norm{\Bv-\BvII}{}
\leq \frac{\norm{\jv}{\LS{\infty}}}{2} \,\bigg(\norm{\evhu}{}^2 
    + \norm{\Bv-\BvII}{}^2\bigg) \\[0.5em]
& \leq \frac{\norm{\jv}{\LS{\infty}}}{2}\,\bigg(\norm{\evhu}{}^2 
  + \CII^2 \hh^2\snorm{\Bv}{1}^2\bigg)
\end{align*}
and
\begin{align*}
    \ABS{\TERM{T}{3,2}}
    \leq \norm{\jv}{\LS{\infty}}\,\norm{\evhu}{}\,\norm{\BvII-\Bvh}{}
    \leq \norm{\jv}{\LS{\infty}}\,\norm{\evhu}{}\,\norm{\evhB}{}
    \leq \frac{1}{2}\norm{\jv}{\LS{\infty}}\,\big(\norm{\evhu}{}^2+\norm{\evhB}{}^2\big).
\end{align*}
Collecting the two above estimates, we find that
\begin{align*}
\TERM{T}{3}
\leq \hh^2\,C_{\TERM{T}{3}}
     + \norm{\jv}{\LS{\infty}}\,\norm{\evhu}{}^2
     + \frac{1}{2}\norm{\jv}{\LS{\infty}}  \norm{\evhB}{}^2 ,
\end{align*}
where
\[
C_{\TERM{T}{3}}
:= \frac{\CII^2}{2}\,\norm{\jv}{\LS{\infty}}\snorm{\Bv}{1}^2.
\]

\paragraph*{Estimate of $\TERM{T}{4}$}
Recalling that $\evhJ=\evhE+\evhu\times\Bvh$, we use (by now) standard manipulations based on the consistency and stability properties~\eqref{consistency:edge} and~\eqref{stability:edge}, and get
\begin{align}
\TERM{T}{4}
& = \big(\jv,\evhJ\big) - \scalEdge{\jvI}{\evhJ}
  = \sum_{\P\in\Th} \Big( \big(\jv,\evhJ\big)_{\P} - \scalEdgeP{\jvI}{\evhJ} \Big) \nonumber\\[0.5em]
& = \sum_{\P\in\Th} \Big( \big(\jv-\jv_{\pi},\evhJ\big)_{\P} - \scalEdgeP{\jvI-\jv_{\pi}}{\evhJ} \Big) \nonumber
\leq\sum_{\P\in\Th} \Big( \norm{\jv-\jv_{\pi}}{0,\P} + \eta^* \norm{\jvI-\jv_{\pi}}{0,\P} \Big)\norm{\evhJ}{0,\P} \nonumber\\[0.5em]
& \leq \sum_{\P\in\Th} \Big( \big(1+\eta^*\big)\norm{\jv-\jv_{\pi}}{0,\P} + \eta^*\norm{\jv-\jvI}{0,\P} \Big)\norm{\evhJ}{0,\P} \nonumber
 \leq \Big( \big(1+\eta^*\big)\norm{\jv-\jv_{\pi}}{} + \eta^*\norm{\jv-\jvI}{} \Big)\norm{\evhJ}{} \nonumber\\[0.5em]
& \leq \frac{1}{2\varepsilon} \Big( \big(1+\eta^*\big)^2\norm{\jv-\jv_{\pi}}{}^2 + (\eta^*)^2\norm{\jv-\jvI}{}^2 \Big) + \varepsilon\norm{\evhJ}{}^2 \nonumber\\[0.5em]
& \leq \frac{\big(1+\eta^*\big)^2\Cappr^2}{2\varepsilon}\hh^2\snorm{\jv}{1}^2 + \frac{(\eta^*)^2}{2\varepsilon}\norm{\jv-\jvI}{}^2 + \varepsilon\norm{\evhJ}{}^2.
\label{eq:T4:00}
\end{align}
Above, $\jv_\pi$ stands for the piecewise constant best approximant of~$\jv$.

Then, we transform $\norm{\jv-\jvI}{}$ by using the triangle inequality and adding and subtracting~$\PizboldhP\uvI$ and~$\PizboldhP\Bv$:
\begin{align}     \label{eq:T4:10}
\norm{\jv-\jvI}{}
& = \norm{\Ev + \uv\times\Bv - \big( \EvI + \PizboldhP\uvI\times\PizboldhP\Bvh \big)}{} \nonumber\\[0.5em]
& \leq \norm{\Ev-\EvI}{} + \norm{\uv\times\Bv - \PizboldhP\uvI\times\PizboldhP\Bvh}{} \nonumber\\[0.5em]
& \leq \CI\hh\snorm{\Ev}{} + \norm{\big(\uv-\PizboldhP\uvI\big)\times\Bv}{}
      + \norm{\PizboldhP\uvI\times\big(\Bv-\PizboldhP\Bv\big)}{} \nonumber\\[0.5em]
& \hspace{1.26cm}  + \norm{\PizboldhP\uvI\times\big(\PizboldhP\Bv-\PizboldhP\Bvh\big)}{} \nonumber\\[0.5em]
&=: \CI\hh\snorm{\Ev}{} + \TERM{T}{4,1} + \TERM{T}{4,2} + \TERM{T}{4,3}.
\end{align}
  Next, we estimate separately the three terms $\TERM{T}{4,1}$,
  $\TERM{T}{4,2}$, and $\TERM{T}{4,3}$.
We control~$\TERM{T}{4,1}$ by adding and substracting $\PizboldhP\uv$, and using the triangle inequality:
\begin{align*}
\TERM{T}{4,1}
&= \norm{\big(\uv-\PizboldhP\uvI\big)\times\Bv}{} \leq \norm{\Bv}{\LS{\infty}}\norm{\uv-\PizboldhP\uvI}{} \\[0.5em]
&\leq \norm{\Bv}{\LS{\infty}}\Big(\norm{\uv-\PizboldhP\uv}{} + \norm{\PizboldhP\big(\uv-\uvI\big)}{}\Big) 
\leq \norm{\Bv}{\LS{\infty}}\,\hh (\Cappr + \CI) \norm{\uv}{1}.
\end{align*}
We control~$\TERM{T}{4,2}$ by using Lemma~\ref{lemma:chi:error}:
\begin{align*}
\TERM{T}{4,2}
& =    \norm{\PizboldhP\uvI\times\big(\Bv-\PizboldhP\Bv\big)}{}
    \leq \norm{\PizboldhP\uvI}{\LS{\infty}}\,\norm{\Bv-\PizboldhP\Bv}{}\\[0.5em]
& \leq (\Cinv \CI \hE^{\frac12} + \Cinv \CD \CSob) \norm{\uv}{2}\,\hh\Cappr\snorm{\Bv}{1}.
\end{align*}
We control~$\TERM{T}{4,3}$ by adding and subtracting~$\BvII$ defined in~\eqref{eq:BvI:intp} and using again Lemma~\ref{lemma:chi:error}:
\begin{align*}
\TERM{T}{4,3}
&= \norm{\PizboldhP\uvI\times\big(\PizboldhP(\Bv-\Bvh)}{}
    \leq \norm{\PizboldhP\uvI}{\LS{\infty}}\,\norm{\Bv-\Bvh}{} \\[0.5em]
&\leq \norm{\PizboldhP\uvI}{\LS{\infty}}\,\Big(\norm{\Bv-\BvII}{} + \norm{\BvI-\Bvh}{}\Big) \\[0.5em]
&\leq (\Cinv \CI \hE^{\frac12} + \Cinv \CD \CSob) \norm{\uv}{2} (\hh\CI\snorm{\Bv}{1} + \norm{\evhB}{}).
\end{align*}
We collect the three above estimates in~\eqref{eq:T4:10}:
\begin{align*}
& \norm{\jv-\jvI}{} \\[0.5em]
&\leq \hh \Big( \CI \snorm{\Ev}{1} + (\Cappr+\CI) \norm{\Bv}{\LS{\infty}} \norm{\uv}{1}
+ (\Cinv \CI \hE^{\frac12} + \Cinv \CD \CSob) (\Cappr + \CI) \norm{\uv}{2}\snorm{\Bv}{1} \Big) \\[0.5em]
& \qquad  + (\Cinv \CI \hE^{\frac12} + \Cinv \CD \CSob) \norm{\uv}{2}\norm{\evhB}{},
\end{align*}
and this inequality in~\eqref{eq:T4:00}, and obtain the following upper bound for $\TERM{T}{4}$:
\[
\TERM{T}{4}
\leq \hh^2 C_{\TERM{T}{4}}(\varepsilon)
+ \frac{(\eta^*)^2}{\varepsilon}  (\Cinv \CI \hE^{\frac12} + \Cinv \CD \CSob)^2 \norm{\uv}{2}^2 \norm{\evhB}{}^2 
+\varepsilon \norm{\evhJ}{}^2,
\]
where~$C_{\TERM{T}{4}}(\varepsilon) \approx \varepsilon^{-1}$ collects all the constants above and depends also on~$\snorm{\Bv}{}$,
$\snorm{\jv}{1}$, $\snorm{\Ev}{1}$, $\norm{\Bv}{\LS{\infty}}$, and~$\norm{\uv}{2}$.

\medskip
\paragraph*{Estimate of $\TERM{T}{5}$}
We proceed with (by now) standard techniques based on the consistency and stability properties~\eqref{consistency:velocity:H1} and~\eqref{eq:velocity:H1:stability}:
given~$\uv_\pi$ the piecewise linear best approximant of~$\uv$,
\begin{align*}
\TERM{T}{5}
& = \sum_{\E \in \taun} \left[ \asP(\uv,\evhu) - \ashP(\uvI,\evhu)  \right]
  = \sum_{\E \in \taun} \left[ \asP(\uv-\uv_\pi,\evhu) - \ashP(\uvI-\uv_\pi,\evhu)  \right]\\[0.5em]
& \le \sum_{\E \in \taun} \big[ \snorm{\uv-\uv_\pi}{1,\E}
    +\sigma^* \left( \snorm{\uv-\uvI}{1,\E} + \snorm{\uv-\uv_\pi}{1,\E}  \right) \big] \snorm{\evhu}{1,\E} \\[0.5em]
& \le \hh \left( (1+\sigma^*)\Cappr +\sigma^*\CI) \right) \snorm{\uv}{2} \snorm{\evhu}{1}
  \le  \hh^2 \ C_{\TERM{T}{5}}(\varepsilon) + \varepsilon \snorm{\evhu}{1}^2,
\end{align*}
where
\[
C_{\TERM{T}{5}}(\varepsilon)
:= \frac{1}{4\varepsilon}  \left( (1+\sigma^*)\Cappr +\sigma^*\CI) \right)^2 \snorm{\uv}{1}^2.
\]
\medskip
\paragraph*{Estimate of $\TERM{T}{6}$}
Standard manipulations based on the Poincar\'e and the Young inequalities yield
\[
\begin{split}
\TERM{T}{6}
& = (\fv,\Piz1 \evhu) - (\fv, \evhu) 
  = (\fv - \Piz1 \fv, \Piz1\evhu - \evhu)\\
& \le \norm{\fv - \Piz1 \fv}{} \norm{\evhu - \Piz1\evhu}{} 
 \le \Cappr^2 \hh^2 \norm{\fv}{1} \snorm{\evhu}{1}
  \le \hh^2 C_{\TERM{T}{6}} + \varepsilon \snorm{\evhu}{1}^2,
\end{split}
\]
where
\[
C_{\TERM{T}{6}} := \frac14 \hh^2 \Cappr^2 \norm{\fv}{1}^2.
\]

\medskip
\paragraph*{Use of the Gr\" onwall's inequality and final step}
Combining the above bounds on the term~$\TERM{T}{i}$, $i=1,\dots,6$, we get
\begin{align*}
\sum_{i=1}^6\TERM{T}{i}
& \le  S_1(\varepsilon) \hh^2 
  + S_2(\varepsilon) \norm{\evhu}{}^2 
  + 5\varepsilon \snorm{\evhu}{1}^2 
  + S_3(\varepsilon) \norm{\evhB}{}^2 + \varepsilon \norm{\evhJ}{}^2,
\end{align*}
where
\begin{align*}
S_1(\varepsilon) & = \sum_{j=1}^6  C_{\TERM{T}{j}}(\varepsilon),
\qquad S_3(\varepsilon) = \frac12 \norm{\jv}{\LS{\infty}} + \frac{(\eta^*)^2}{\varepsilon} (\Cinv \CI \hE^{\frac12} + \Cinv \CD \CSob)^2 \norm{\uv}{2}^2, \\[0.5em]
S_2(\varepsilon) &= \varepsilon + (1+1\slash\varepsilon) C_1 R_1(\uv) 
+ \norm{\jv}{\LS{\infty}}.
\end{align*}
The terms~$S_j(\varepsilon)$, $j=1,2,3$, depend on the time instant~$t$,
since they depend on~$\uv$, $\Bv$, and~$\jv$.
For the presentation's sake, we do not express this dependence explicitly in our notation.

We substitute the above bound in~\eqref{eq:RHS:T-terms} and the resulting inequality in~\eqref{eq:LHS=RHS}.
Next, we pick
\[
\varepsilon = \varepsilontilde= \min(\alpha_*^{-1}, \eta_*^{-1})\slash 10
\]
and, at every time instant~$t \in (0,T]$, we obtain
\[
\begin{split}
\LHS{} 
& \le 2 \left[ S_1(\varepsilontilde)\hh^2 + S_2(\varepsilontilde) \norm{\evhu}{}^2  + S_3(\varepsilontilde)\, \norm{\evhB}{}^2 \right]\\[0.5em]
& \le 2 \left[ S_1(\varepsilontilde)\hh^2 + \mu_*^{-1} S_2(\varepsilontilde) \msh(\evhu,\evhu)  + \chi_*^{-1} S_3(\varepsilontilde)\, \scalFace{\evhB}{\evhB} \right],
\end{split}
\]
where~$\mu_*$ and~$\chi_*$ are the stability constants defined in~\eqref{eq:velocity:L2:stability} and~\eqref{stability:face}, respectively.

By integrating in time both sides between~$0$ and~$t$, we find that
\begin{align*}
& \norm{\evhu(t)}{\msh}^2 + \norm{\evhB(t)}{\VvhFace}^2
+ \int_0^t \norm{\evhu(s)}{\Wvh}^2
+ \int_0^t \norm{\evhJ(s)}{\VvhEdge}^2 \\[0.5em]
& \leq \norm{\evhu(0)}{\msh}^2 + \norm{\evhB(0)}{\VvhFace}^2 
  + \hh^2\int_0^t \calG_1(s) 
  + \int_0^t \calG_2(s)\Big( \norm{\evhu(s)}{\msh}^2 + \norm{\evhB(s)}{\VvhFace}^2 \Big),
\end{align*}
where 
\begin{align*}
\calG_1(s) = 2 S_1(\varepsilontilde),
\qquad
\calG_2(s) = 2 \mu_*^{-1}S_2(\varepsilontilde) + 2 \chi_*^{-1} S_3(\varepsilontilde).
\end{align*}
The assertion of the theorem follows from an application of Gr\" onwall's inequality, i.e.,
\begin{align*}
& \norm{\evhu(t)}{\msh}^2 + \norm{\evhB(t)}{\VvhFace}^2 
+ \int_0^t \norm{\evhu(s)}{\Wvh}^2 
+ \int_0^t \norm{\evhJ(s)}{\VvhEdge}^2 \\[0.5em]
& \qquad\leq \Cnst{\calG}(t)\big( \norm{\evhu(0)}{\msh}^2 + \norm{\evhB(0)}{\VvhFace}^2  \big) \\[0.5em]
& \qquad\quad + \hh^2 \Cnst{\calG}(t)\int_0^t\calG_1(s)
\quad\textrm{with}\quad
\Cnst{\calG}(t) = \exp\left( \int_0^t\calG_2(s) \right),
\end{align*}
taking the square root on both sides of the above inequality,
and using the stability properties~\eqref{eq:velocity:L2:stability},
\eqref{stability:edge}, and~\eqref{stability:face} on both sides.
\end{proof}

\begin{rem} \label{remark:pressure-convergence}
A linear error bound in the $L^2(0,T;L^2(\Omega))$ norm for the pressure variable could be derived using techniques
based on the discrete inf-sup condition and a suitable error bound on the time derivatives of the velocity and magnetic field.
\end{rem}

%%%%%%%%%%%%%%%%%%%%%%%%%%%%%%%%%%%%%%%%%%%%%%%%

\section{Numerical experiments}\label{section:numerical:experiments}

In this section, we validate the theoretical results obtained in Theorem~\ref{thm:convergence:2} and Remark~\ref{remark:pressure-convergence}
with numerical experiments on a manufactured solution problem.
We also show that the discrete velocity and magnetic fields are divergence free up to the conditioning of the final system.
To deal with time derivatives, we used an implicit Euler scheme.
%%%%%%%%%%%%% non linearità come la gestiamo
As for the treatment of the nonlinear terms, see~\eqref{eq:VEM:weak:A} and~\eqref{eq:VEM:weak:B},
we employed a fixed-point strategy.
In our experiments, only few fixed-point iterations were required for the convergence of the method at each time step (typically only 3 nonlinear iterations were sufficient).
%%%%%%%%%%%%% non linearità come la gestiamo
The resulting linear systems are always solved with a direct
solver~\cite{MUMPS}.
Moreover, to speed up the execution and reduce the computational
time, the assembling and solving of the linear system at hand were run
in parallel.

%%%%%%%%%%%%% descrizione del problema
Given the computational domain~$\Omega:=[0,\,1]^3$, $\REY=\REM=s=1$, and the final time~$T=1$,
we consider the following exact solution:
\begin{eqnarray*}
\Bv &:=& \left[\begin{array}{c}
     4y^3 - 4z^3 - t(24y - 24z)\\
     -3x^2 + 6t\\
     -3y^2 + 6t
\end{array}\right],\\
\Ev &:=& \left[\begin{array}{c}
     6y\\
     12z^2 - 24t\\
     12y^2 - 24t + 6x
\end{array}\right],\\
\uv &:=& \left[\begin{array}{c}
     \sin(\pi x)\cos(\pi y)\cos(\pi z)\cos(t)\\
     \cos(\pi x)\sin(\pi y)\cos(\pi z)\cos(t)\\
     -2\cos(\pi x)\cos(\pi y)\sin(\pi z)\cos(t)
\end{array}\right],\\
p &:=& \left(x^2 + y\,z + z - \frac{13}{12}\right)\cos(t)\,.
\end{eqnarray*}
We compute the right-hand side~$\fv$ in~\eqref{eq:continuous:problem-strong:a} accordingly.
Observe that we are required to add a nonhomogeneous current density on the right-hand side of~\eqref{eq:continuous:problem-strong:b}.

%%%%%%%%%%%%% cosa guardiamo 
Since the discrete fields~$\Bv_h$, $\Ev_h$, and~$\uv_h$ are not available in closed form,
we measure approximate relative $L^2$-errors involving suitable polynomial projections.
On the other hand, the discrete pressure~$p_h$ is piecewise constant over~$\Th$.
Thence, we compute the following four error quantities:
\begin{equation} \label{computed-errors}
\frac{\norm{\uv-\Pizv{1}\uvh}{}}{\norm{\uv}{}};\qquad
\frac{\norm{\Ev-\Pizv{0}\Evh}{}}{\norm{\Ev}{}};\qquad
\frac{\norm{\Bv-\Pizv{0}\Bvh}{}}{\norm{\Bv}{}};\qquad
\frac{\norm{p-\psh}{}}{\norm{p}{}}.
\end{equation}
For all quantities, the expected convergence rate is linear.

To verify that~$\Bv_h$ and~$\uv_h$ are divergence free up to machine precision,
we compute the~$L^2$ norm of their divergence, i.e., $\div(\Bv_h)$ and $\div(\uv_h)$,
which are computable using only the degrees of freedom.

%%%%%%%%%%%%% come discretizziamo in spazio e tempo  
We consider three families of meshes on~$\Omega$:
\begin{itemize}
\item \texttt{tetra}: Delaunay tetrahedral meshes; see Figure~\ref{fig:meshes}(a);
\item \texttt{cube}: structured meshes consisting of cubes; see Figure~\ref{fig:meshes}(b);
\item \texttt{voro}: Voronoi tessellations optimized by the Lloyd algorithm; see Figure~\ref{fig:meshes}(c).
\end{itemize}

\begin{figure}[htb]
\centering
\begin{tabular}{ccc}
\includegraphics[width=0.30\textwidth]{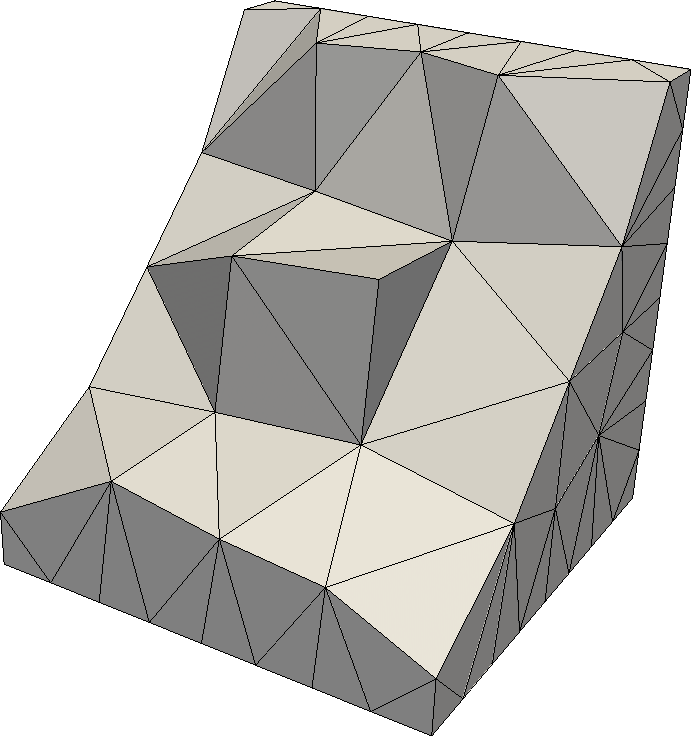} &
\includegraphics[width=0.30\textwidth]{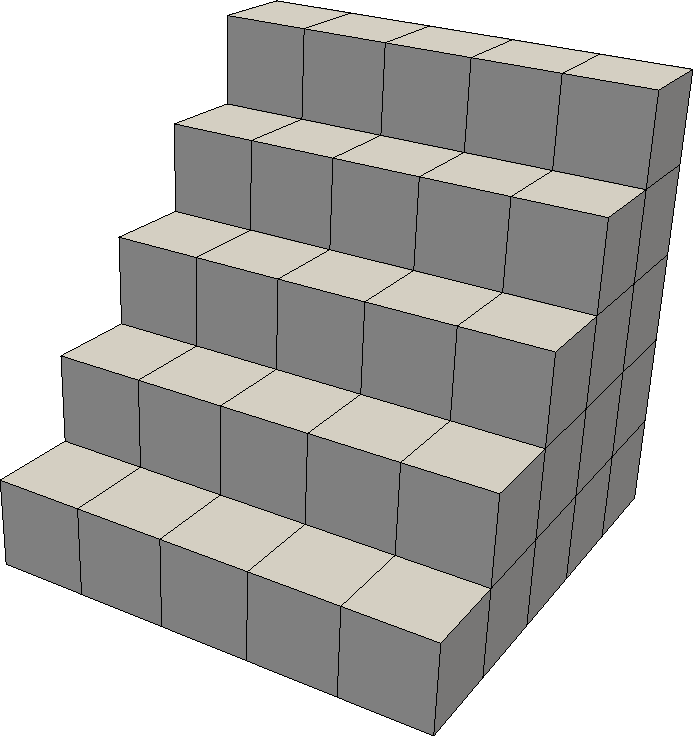} &
\includegraphics[width=0.30\textwidth]{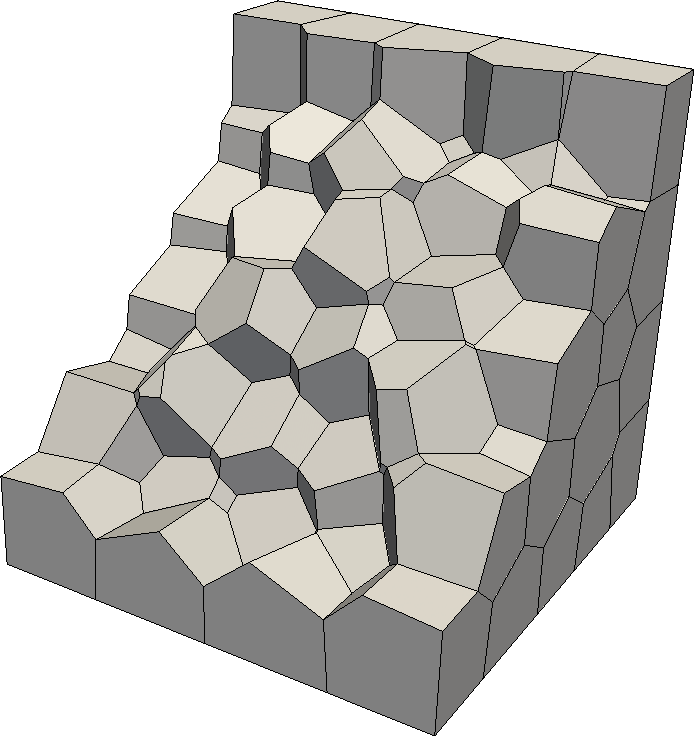} \\
(a)&(b)&(c)
\end{tabular}
\caption{Three representatives of the three family of meshes decomposing the unit cube~$\Omega$.
(a): Delauney mesh; (b) cube mesh; (c) Voronoi-Lloyd mesh.}
\label{fig:meshes}
\end{figure}

To better appreciate the linear trend of all field errors, we refine simultaneously in space and time:
for each mesh family, we consider four meshes with decreasing mesh-size
and use a uniform time discretization of steps~$1/4$, $1/8$, $1/16$, and~$1/32$.

%%%%%%%%%%%%% risultati descrizione
In Figure~\ref{fig:convLines}, we display the errors in~\eqref{computed-errors} for simultaneous space and time refinements.
As expected, we observe linear convergence.
Interestingly, we observe improved convergence by half an order for the velocity field when employing \texttt{cube} and \texttt{voro} meshes, and even a full order improved convergence when employing \texttt{tetra} meshes.
This is in accordance with the fact that the velocity space contains $\Pbb_1$ vector polynomials on each element and we are measuring the $L^2$ norm of the approximation errors.
However that may be, it is an improvement over what is predicted in Theorem~\ref{thm:convergence:2}.

\begin{figure}[htb]
    \centering
    \begin{tabular}{cc}
    \includegraphics[width=0.45\textwidth]{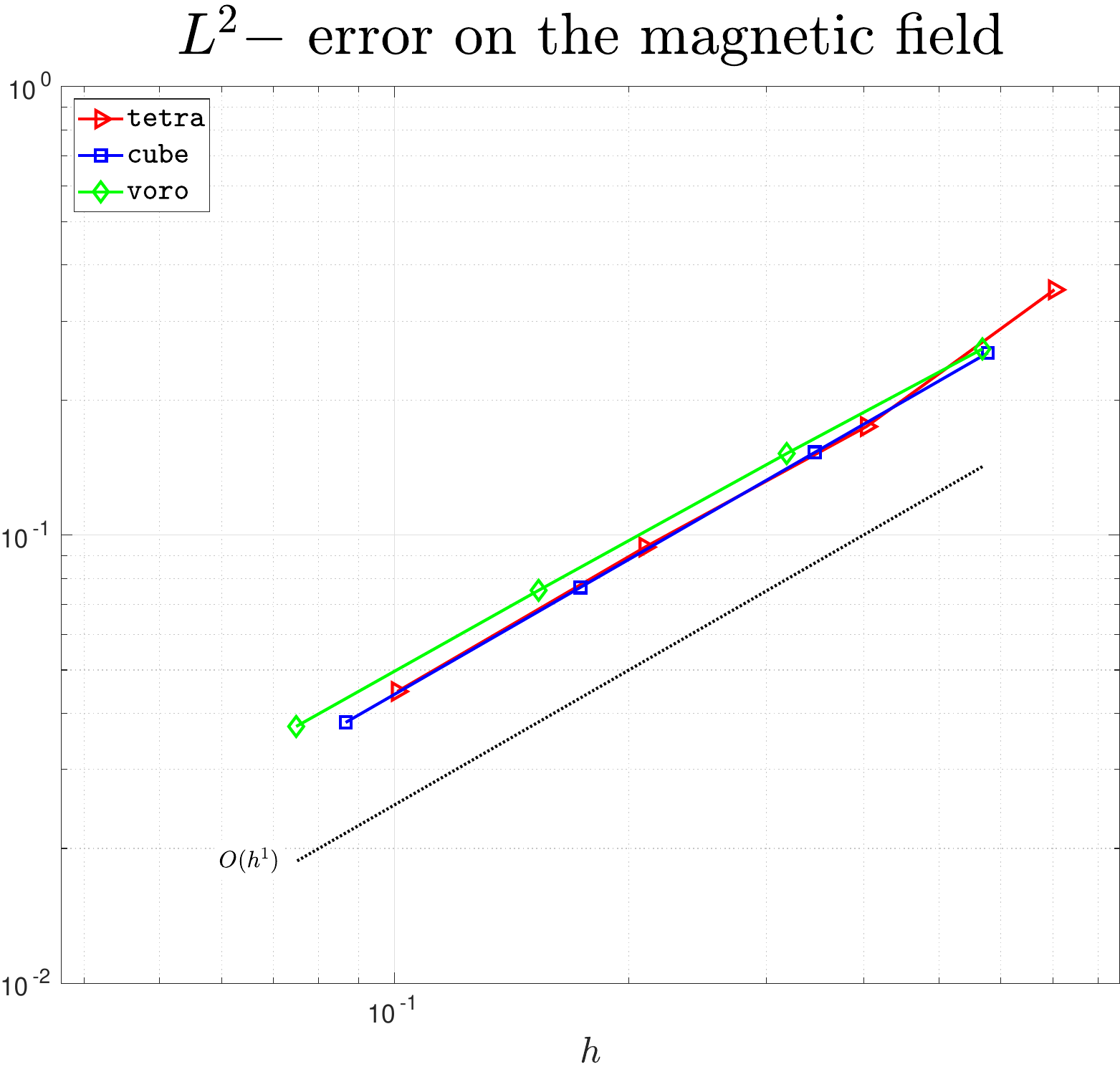} &
    \includegraphics[width=0.45\textwidth]{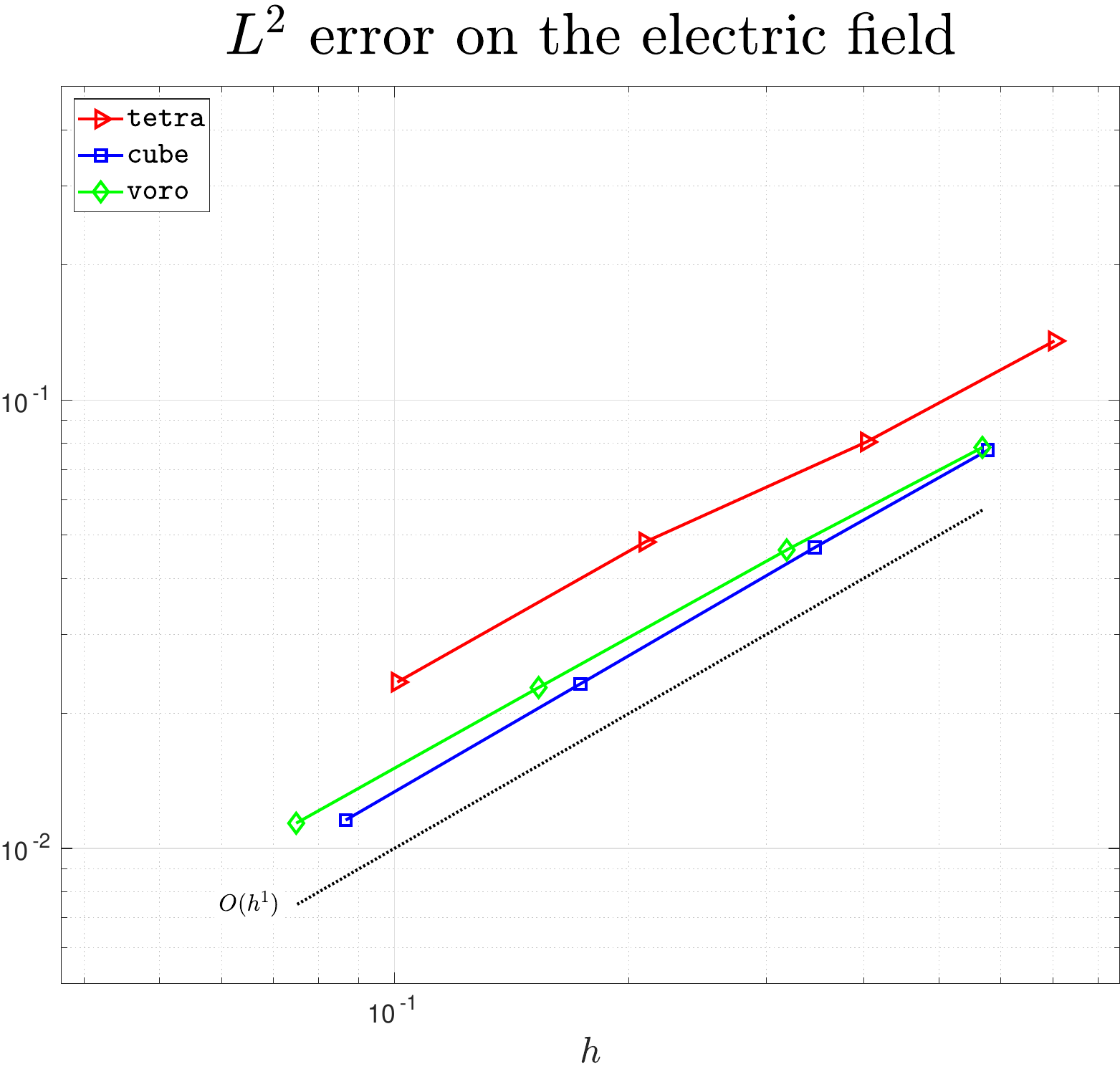} \\
    \includegraphics[width=0.45\textwidth]{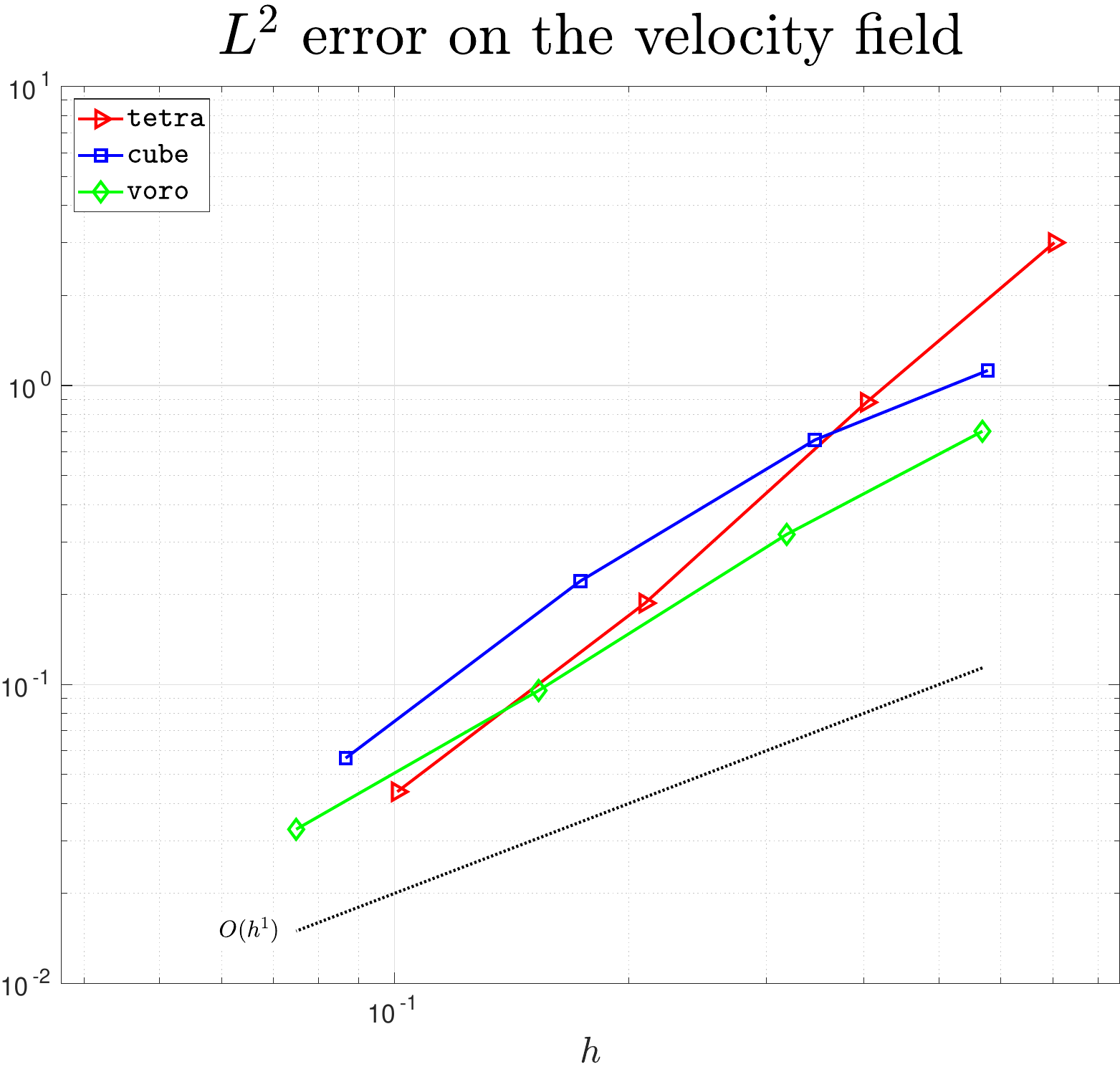} &
    \includegraphics[width=0.45\textwidth]{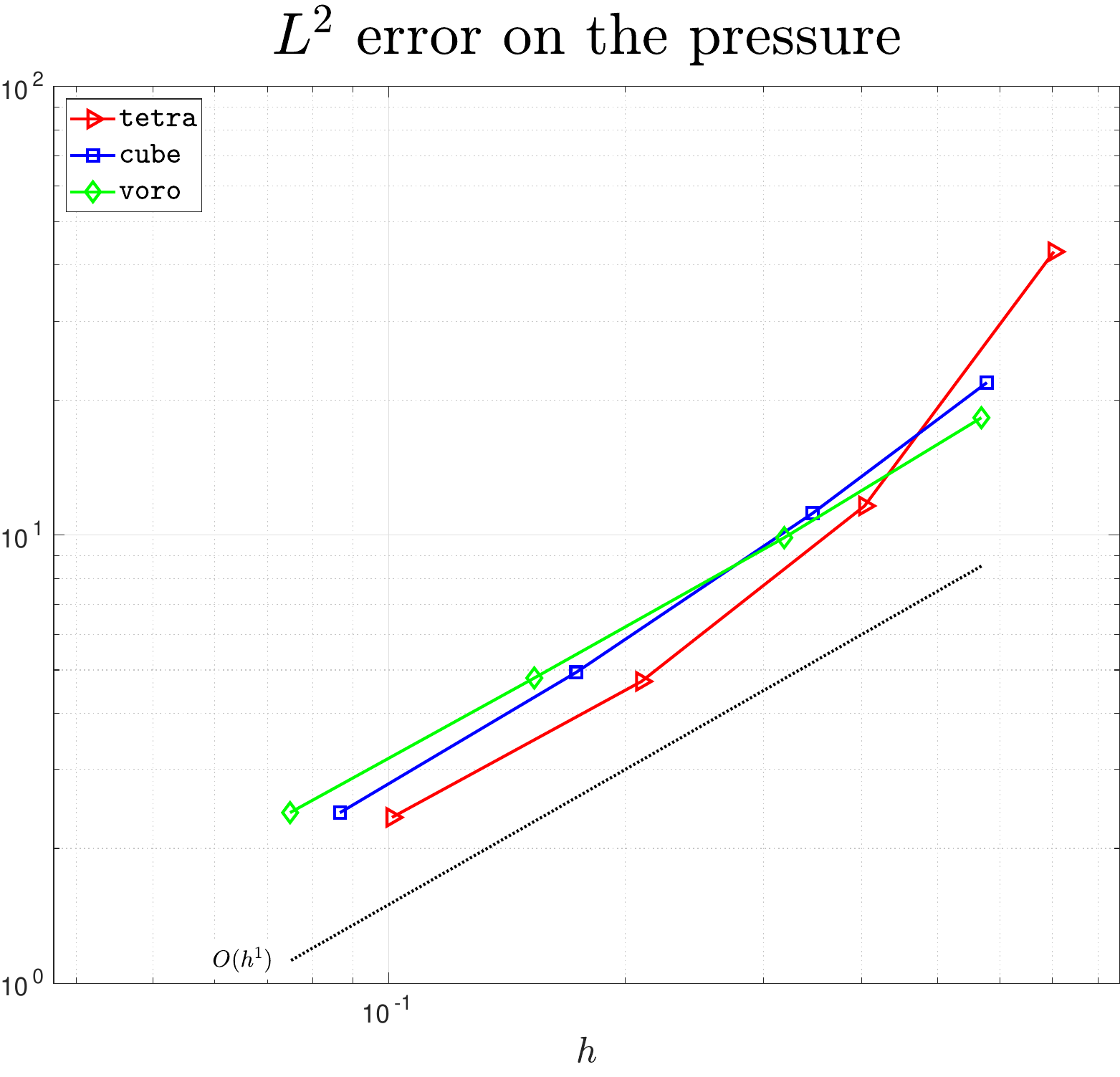} \\
    \end{tabular}
    \caption{Errors~\eqref{computed-errors} for simultaneous mesh refinements,
    using \texttt{tetra}, \texttt{cube}, and \texttt{voro} meshes.}
    \label{fig:convLines}
\end{figure}

In Tables~\ref{tab:divB} and~\ref{tab:divU}, we report the~$L^2$ norm of the divergence of~$\Bvh$ and~$\uvh$, respectively, for each type of meshes and at each space/time refinement step. 
We refer to such refinement levels as~$\texttt{level 1}$, $\texttt{2}$, $\texttt{3}$, and~$\texttt{4}$, where $\texttt{level 1}$ corresponds to the coarsest mesh and the largest time step.
Throughout the refinement process, the value of the norm of the divergence of both fields remains below $1e$-$14$ and $1e$-$11$, respectively, for all choices of space and time refinement.
The growth of such values corresponds to the deterioration of the condition number of the matrices appearing in the linear systems.

\begin{table}[htb]
    \centering
    \begin{tabular}{|r|c|c|c|c|}
    \cline{2-5}
    \multicolumn{1}{c}{}&\multicolumn{4}{|c|}{$\norm{\div \Bvh}{}$}\\
    \cline{2-5}
    \multicolumn{1}{c|}{}&\texttt{level 1}&\texttt{level 2}&\texttt{level 3}&\texttt{level 4}\\
    \hline
    \texttt{tetra} &4.6977e-13 &7.8962e-13 &3.3856e-12 &1.6680e-11\\
    \texttt{cube}  &1.1798e-13 &2.2284e-13 &6.6499e-13 &2.2580e-12\\
    \texttt{voro}  &2.9645e-11 &6.4690e-13 &1.7873e-11 &5.3376e-11\\
    \hline
    \end{tabular}
    \caption{$L^2$ norm of~$\div \Bvh$ for several space and time refinement steps.}
    \label{tab:divB}
\end{table}

\begin{table}[htb]
    \centering
    \begin{tabular}{|r|c|c|c|c|}
    \cline{2-5}
    \multicolumn{1}{c}{}&\multicolumn{4}{|c|}{$\norm{\div \uvh}{}$}
    \\
    \cline{2-5}
    \multicolumn{1}{c|}{}&\texttt{level 1}&\texttt{level 2}&\texttt{level 3}&\texttt{level 4}\\
    \hline
    \texttt{tetra} &7.6676e-16 &2.0355e-15 &1.0726e-14 &6.1851e-14\\
    \texttt{cube}  &1.1714e-15 &1.9226e-15 &7.3605e-15 &4.0470e-14\\
    \texttt{voro}  &2.7855e-16 &3.2315e-15 &1.6780e-14 &8.8101e-14\\
    \hline
    \end{tabular}
    \caption{$L^2$ norm of~$\div \uvh$ for several space and time refinement steps.}
    \label{tab:divU}
\end{table}

%%%%%%%%%%%%%%%%%%%%%%%%%%%%%%%%%%%%%%%%%%%%%%

\section{Conclusions}
\label{section:conclusions}
We constructed a virtual element method for the approximation of the
solutions to the 3D time-dependent resistive magnetohydrodynamic
model.
The scheme guarantees ``exactly'' divergence-free velocity and
magnetic fields and is suitable to general polyhedral meshes. The
convergence analysis hinges upon the exact sequence structure of the
employed virtual element spaces.
We devoted particular attention to the treatment of the two nonlinear
terms: one is the usual convective term appearing in the Navier-Stokes
equations; the second couples the velocity, magnetic, and electric
fields.
We also validated the theoretical convergence results with a numerical
experiment on a manufactured solution.
Future works will thoroughly cope with the numerical performance of
the method: we shall present more involved and realistic benchmarks
and investigate different approaches to deal with the nonlinear terms.
Further investigations will also cover the design of suitable (direct, iterative, and parallel) solvers to improve the computational efficiency in solving the final linear systems.

\section*{Acknowledgments}
L. Beir\~ao da Veiga was partially supported by the italian PRIN 2017
grant “Virtual Element Methods: Analysis and Applications” and the
PRIN 2020 grant “Advanced polyhedral discretisations of heterogeneous
PDEs for multiphysics problems”.
Both these supports are gratefully acknowledged.
L. Mascotto acknowledges support from the Austrian Science Fund (FWF)
project P33477.
G. Manzini was partially supported by the ERC Project CHANGE, which
has received funding from the European Research Council (ERC) under
the European Union's Horizon 2020 research and innovation program
(grant agreement no.~694515).

% ----------------------------
% Bibliography
% ----------------------------

%\clearpage
{\footnotesize
\bibliography{bibliogr}

\begin{thebibliography}{10}

\bibitem{Adams-Fournier:2003}
R.~A. Adams and J.~J.~F. Fournier.
\newblock {\em Sobolev spaces}.
\newblock Pure and Applied Mathematics. Academic Press, 2 edition, 2003.

\bibitem{MUMPS}
P.~R. Amestoy, I.~S. Duff, J.-Y. L’Excellent, and J.~Koster.
\newblock {MUMPS}: a general purpose distributed memory sparse solver.
\newblock In {\em {International Workshop on Applied Parallel Computing}},
  pages 121--130. Springer, 2000.

\bibitem{Antonietti-BeiraodaVeiga-Mora-Verani:2014}
P.~F. Antonietti, L.~Beir{\~a}o~da Veiga, D.~Mora, and M.~Verani.
\newblock A stream virtual element formulation of the {S}tokes problem on
  polygonal meshes.
\newblock {\em SIAM J. Numer. Anal.}, 52(1):386--404, 2014.

\bibitem{BeiraodaVeiga-Brezzi-Cangiani-Manzini-Marini-Russo:2013}
L.~{Beir\~ao~da~Veiga}, F.~Brezzi, A.~Cangiani, G.~Manzini, L.~D. Marini, and
  A.~Russo.
\newblock Basic principles of virtual element methods.
\newblock {\em Math. Models Meth. Appl. Sci.}, 23(01):199--214, 2013.

\bibitem{BeiraodaVeiga-Brezzi-Dassi-Marini-Russo:Magneto2017}
L.~{Beir{\~a}o~da~Veiga}, F.~Brezzi, F.~Dassi, L.~D. Marini, and A.~Russo.
\newblock Virtual element approximation of {2D} magnetostatic problems.
\newblock {\em Comput. Methods Appl. Mech. Engrg.}, 327:173--195, 2017.

\bibitem{BeiraodaVeiga-Lovadina-Vacca:2017}
L.~{Beir{\~a}o da Veiga}, C.~Lovadina, and G.~Vacca.
\newblock Divergence free virtual elements for the {S}tokes problem on
  polygonal meshes.
\newblock {\em ESAIM Math. Model. Numer. Anal.}, 51(2):509--535, 2017.

\bibitem{BeiraodaVeiga-Lovadina-Vacca:2018}
L.~{Beir{\~a}o da Veiga}, C.~Lovadina, and G.~Vacca.
\newblock Virtual elements for the {N}avier--{S}tokes problem on polygonal
  meshes.
\newblock {\em SIAM J. Numer. Anal.}, 56(3):1210--1242, 2018.

\bibitem{BeiraodaVeiga-Mascotto:2021}
L.~Beir\~ao~da Veiga and L.~Mascotto.
\newblock Interpolation and stability properties of low order face and edge
  virtual element spaces.
\newblock {\em IMA J. Numer. Anal.}, 2022.

\bibitem{Beirao-Mascotto-Meng:2022}
L.~Beir\~ao~da Veiga, L.~Mascotto, and J.~Meng.
\newblock Stability and interpolation properties for {S}tokes-like virtual
  element spaces.
\newblock \url{https://arxiv.org/abs/2207.09844}, 2022.

\bibitem{BeiraodaVeiga-Mora-Vacca:2019}
L.~{Beir{\~a}o da Veiga}, D.~Mora, and G.~Vacca.
\newblock The {S}tokes complex for virtual elements with application to
  {N}avier--{S}tokes flows.
\newblock {\em J. Sci. Comput.}, 81(2):990--1018, 2019.

\bibitem{BeiraodaVeiga-Brezzi-Dassi-Marini-Russo:2018SIAM}
L.~Beir{\~a}o~da Veiga, F.~Brezzi, F.~Dassi, L.~D. Marini, and A.~Russo.
\newblock A family of three-dimensional virtual elements with applications to
  magnetostatics.
\newblock {\em SIAM J. Numer. Anal.}, 56(5):2940--2962, 2018.

\bibitem{BeiraodaVeiga-Brezzi-Dassi-Marini-Russo:2018}
L.~Beir{\~a}o~da Veiga, F.~Brezzi, F.~Dassi, L.~D. Marini, and A.~Russo.
\newblock Lowest order virtual element approximation of magnetostatic problems.
\newblock {\em Comput. Methods Appl. Mech. Engrg.}, 332:343--362, 2018.

\bibitem{BeiraodaVeiga-Brezzi-Marini-Russo:2016}
L.~Beir{\~a}o~da Veiga, F.~Brezzi, L.~D. Marini, and A.~Russo.
\newblock {H}(div) and {H}(curl)-conforming virtual element methods.
\newblock {\em Numer. Math}, 133(2):303--332, 2016.

\bibitem{BeiraodaVeiga-Dassi-Manzini-Mascotto:2021}
L.~Beir{\~a}o~da Veiga, F.~Dassi, G.~Manzini, and L.~Mascotto.
\newblock Virtual elements for {M}axwell's equations.
\newblock {\em Comput. Math. Appl.}, 116:82--99, 2022.

\bibitem{BeiraodaVeiga-Dassi-Vacca:2020}
L.~Beir{\~a}o~da Veiga, F.~Dassi, and G.~Vacca.
\newblock The {S}tokes complex for virtual elements in three dimensions.
\newblock {\em Math. Models Methods Appl. Sci.}, 30(03):477--512, 2020.

\bibitem{BeiraoDaVeiga-Lovadina-Russo:2017}
L.~Beir{\~a}o~da Veiga, C.~Lovadina, and A.~Russo.
\newblock Stability analysis for the virtual element method.
\newblock {\em Math. Models Methods Appl. Sci.}, 27(13):2557--2594, 2017.

\bibitem{Boyd-Sandserson:2003-book}
T.~Boyd and J.~Sanderson.
\newblock {\em The Physics of Plasmas}.
\newblock Cambridge University Press, 2003.

\bibitem{Brackbill:1985}
J.~Brackbill.
\newblock Fluid modeling of magnetized plasmas.
\newblock {\em Space Sci. Rev.}, 42(1-2):153--167, 1985.

\bibitem{Brackbill-Barnes:1980}
J.~U. Brackbill and D.~C. Barnes.
\newblock The effect of nonzero {Div B} on the numerical solution of the
  magnetohydrodynamic equations.
\newblock {\em J. Comput. Phys.}, 35(3):426--430, 1980.

\bibitem{Brenner-Guan-Sung:2017}
S.~C. Brenner, Q.~Guan, and L.-Y. Sung.
\newblock Some estimates for virtual element methods.
\newblock {\em Comput. Methods Appl. Math.}, 17(4):553--574, 2017.

\bibitem{Brenner-Sung:2018}
S.~C. Brenner and L.-Y. Sung.
\newblock Virtual element methods on meshes with small edges or faces.
\newblock {\em Math. Models Methods Appl. Sci.}, 28(07):1291--1336, 2018.

\bibitem{Burton-Morgan-Charest-Kenamond-Fung:2018}
D.~E. Burton, N.~R. Morgan, M.~R.~J. Charest, M.~A. Kenamond, and J.~Fung.
\newblock Compatible, energy conserving, bounds preserving remap of
  hydrodynamic fields for an extended {ALE} scheme.
\newblock {\em J. Comput. Phys.}, 355:492--533, 2018.

\bibitem{Caceres-Gatica:2017}
E.~C{\'a}ceres and G.~N. Gatica.
\newblock A mixed virtual element method for the pseudostress-velocity
  formulation of the {S}tokes problem.
\newblock {\em IMA J. Numer. Anal.}, 37(1):296--331, 2017.

\bibitem{Caceres-Gatica-Sequeira:2017}
E.~C{\'a}ceres, G.~N. Gatica, and F.~A. Sequeira.
\newblock A mixed virtual element method for the {B}rinkman problem.
\newblock {\em Math. Models Meth. Appl. Sci.}, 27(04):707--743, 2017.

\bibitem{Caceres-Gatica:2018}
E.~C{\'a}ceres, G.~N. Gatica, and F.~A. Sequeira.
\newblock A mixed virtual element method for quasi-{N}ewtonian {S}tokes flows.
\newblock {\em SIAM J. Numer. Anal.}, 56(1):317--343, 2018.

\bibitem{Cangiani-Gyrya-Manzini:2016}
A.~Cangiani, V.~Gyrya, and G.~Manzini.
\newblock The non-conforming virtual element method for the {S}tokes equations.
\newblock {\em SIAM J. Numer. Anal.}, 54(6):3411--3435, 2016.

\bibitem{Cao-Chen:2018}
S.~Cao and L.~Chen.
\newblock Anisotropic error estimates of the linear virtual element method on
  polygonal meshes.
\newblock {\em SIAM J. Numer. Anal.}, 56(5):2913--2939, 2018.

\bibitem{Cao-Chen-Guo:2021}
S.~Cao, L.~Chen, and R.~Guo.
\newblock A virtual finite element method for two dimensional {M}axwell
  interface problems with a background unfitted mesh.
\newblock {\em Math. Models Methods Appl. Sci.}, 31(14):2907--2936, 2021.

\bibitem{Cao-Chen-Guo-Lin:2021}
S.~Cao, L.~Chen, R.~Guo, and F.~Lin.
\newblock Immersed virtual element methods for elliptic interface problems.
\newblock {\em J. Sci. Comput.}, 93(1):1--41, 2022.

\bibitem{Chen-Huang:2020}
L.~Chen and X.~Huang.
\newblock Discrete {H}essian complexes in three dimensions.
\newblock In {\em The Virtual Element Method and its Applications}, pages
  93--135. Springer, 2022.

\bibitem{Chernov-Marcati-Mascotto:2021}
A.~Chernov, C.~Marcati, and L.~Mascotto.
\newblock $p$- and $hp$-virtual elements for the {S}tokes problem.
\newblock {\em Adv. Comp. Math.}, 47(2), 2021.

\bibitem{Ciarlet:1978}
P.~G. Ciarlet.
\newblock {\em The Finite Element Method}.
\newblock Studies in Mathematics and its Applications. North Holland, 1978.

\bibitem{Craxton:2015}
R.~S. Craxton, K.~S. Anderson, T.~R. Boehly, V.~N. Goncharov, D.~R. Harding,
  J.~P. Knauer, R.~L. McCrory, P.~W. McKenty, D.~D. Meyerhofer, J.~F. Myatt,
  A.~J. Schmitt, J.~D. Sethian, R.~W. Short, S.~Skupsky, W.~Theobald, W.~L.
  Kruer, K.~Tanaka, R.~Betti, T.~J.~B. Collins, J.~A. Delettrez, S.~X. Hu,
  J.~A. Marozas, A.~V. Maximov, D.~T. Michel, P.~B. Radha, S.~P. Regan, T.~C.
  Sangster, W.~Seka, A.~A. Solodov, J.~M. Soures, C.~Stoeckl, and J.~D. Zuegel.
\newblock Direct-drive inertial confinement fusion: A review.
\newblock {\em Phys. Plasmas}, 22:110501--, 2015.

\bibitem{Dai-Woodward:1998}
W.~Dai and P.~R. Woodward.
\newblock On the divergence-free condition and conservation laws in numerical
  simulations for supersonic magnetohydrodynamical flows.
\newblock {\em Astrophys. J.}, 494(1):317, 1998.

\bibitem{Dassi-Vacca:2020}
F.~Dassi and G.~Vacca.
\newblock Bricks for the mixed high-order virtual element method: {P}rojectors
  and differential operators.
\newblock {\em Appl. Numer. Math.}, 155:140--159, 2020.

\bibitem{Davidson:2002-book}
P.~A. Davidson.
\newblock {\em An Introduction to Magnetohydrodynamics}.
\newblock Cambridge Texts in Applied Mathematics. Cambridge University Press, 1
  edition, 2001.

\bibitem{Dedner-Kemm-Kroner-Munz-Schnitzer-Wesenberg:2002}
A.~Dedner, F.~Kemm, D.~Kr{\"o}ner, C.~D. Munz, T.~Schnitzer, and M.~Wesenberg.
\newblock Hyperbolic divergence cleaning for the {MHD} equations.
\newblock {\em J. Comput. Phys.}, 175(2):645--673, 2002.

\bibitem{Ding-Yang:2017}
J.~Ding and Y.~Yang.
\newblock Low-dispersive {FDTD} on hexagon revisited.
\newblock {\em Electron. Lett.}, 53(13):834--835, 2017.

\bibitem{Duvaut-Lions:1972}
G.~Duvaut and J.~L. Lions.
\newblock In\'equations en thermo\'elasticit\'e et magn\'etohydrodynamique.
\newblock {\em Arch. Ration. Mech. Anal.}, 46:241--279, 1972.

\bibitem{Evans:2010}
L.~C. Evans.
\newblock {\em Partial Differential Equations}.
\newblock American {M}athematical {S}ociety, 2010.

\bibitem{Feng:2020}
X.~Feng.
\newblock {\em Current Status of MHD Simulations for Space Weather (Chapter
  1)}.
\newblock Springer, Singapore, 2020.

\bibitem{Fukumoto-Zhao:2019}
Y.~Fukumoto and X.~Zhao.
\newblock {Well-posedness and large time behavior of solutions for the electron
  inertial {H}all-{MHD} system}.
\newblock {\em Adv. Diff. Equ.}, 24(1/2):31 -- 68, 2019.

\bibitem{Gaffney:2018}
J.A. Gaffney, S.X. Hu, P.~Arnault, A.~Becker, L.X. Benedict, T.R. Boehly, P.M.
  Celliers, D.M. Ceperley, O.~Certik, J.~Cl\'erouin, G.W. Collins, L.A.
  Collins, J.-F. Danel, N.~Desbiens, M.W.C. Dharma-wardana, Y.H. Ding,
  A.~Fernandez-Pañella, M.C. Gregor, P.E. Grabowski, S.~Hamel, S.B. Hansen,
  L.~Harbour, X.T. He, D.D. Johnson, W.~Kang, V.V. Karasiev, L.~Kazandjian,
  M.D. Knudson, T.~Ogitsu, C.~Pierleoni, R.~Piron, R.~Redmer, G.~Robert,
  D.~Saumon, A.~Shamp, T.~Sjostrom, A.V. Smirnov, C.E. Starrett, P.A. Sterne,
  A.~Wardlow, H.D. Whitley, B.~Wilson, P.~Zhang, and E.~Zurek.
\newblock A review of equation-of-state models for inertial confinement fusion
  materials.
\newblock {\em High Energ. Dens. Phys.}, 28:7--24, 2018.

\bibitem{Gatica-Munar-Sequeira:2018}
G.~N. Gatica, M.~Munar, and F.~A. Sequeira.
\newblock A mixed virtual element method for the {N}avier-{S}tokes equations.
\newblock {\em Math. Models Methods Appl. Sci}, 28(14):2719--2762, 2018.

\bibitem{Greif-Li-Schoetzau-Wei:2010}
C.~Greif, D.~Li, D.~Sch{\"o}tzau, and X.~Wei.
\newblock A mixed finite element method with exactly divergence-free velocities
  for incompressible magnetohydrodynamics.
\newblock {\em Comput. Methods Appl. Mech. Engrg.}, 199(45-48):2840--2855,
  2010.

\bibitem{He:2015}
Y.~He.
\newblock Unconditional convergence of the {E}uler semi-implicit scheme for the
  three-dimensional incompressible mhd equations.
\newblock {\em IMA J. Numer. Anal.}, 35(2):767--801, 2015.

\bibitem{He-Huang-Wang:2014}
C.~Hem, X.~Huang, and Y.~Wang.
\newblock On some new global existence results for {3D} magnetohydrodynamic
  equations.
\newblock {\em Nonlinearity}, 27:343--352, 2014.

\bibitem{Hiptmair-Li-Mao_Zheng:2018}
R.~Hiptmair, L.~Li, S.~Mao, and W.~Zheng.
\newblock A fully divergence-free finite element method for magnetohydrodynamic
  equations.
\newblock {\em Math. Models Methods Appl. Sci.}, 28(04):659--695, 2018.

\bibitem{Hirt-Amsden-Cook:1974}
C.W. Hirt, A.~A. Amsden, and J.~L. Cook.
\newblock An {A}rbitrary {L}agrangian-{E}ulerian computing method for all flow
  speeds.
\newblock {\em J. Comput. Phys.}, 14(3):227--253, 1974.

\bibitem{Hu-Ma-Xu:2017}
K.~Hu, Y.~Ma, and J.~Xu.
\newblock Stable finite element methods preserving {$\nabla \cdot \mathbf{B}=
  0$} exactly for {MHD} models.
\newblock {\em Numer. Math.}, 135(2):371--396, 2017.

\bibitem{Hu-Qiu-Shi:2020}
K.~Hu, W.~Qiu, and K.~Shi.
\newblock Convergence of a {B-E} based finite element method for {MHD} models
  on {L}ipschitz domains.
\newblock {\em J. Comput. Appl. Math.}, 368:112477, 2020.

\bibitem{Irisarri-Gauke:2019}
D.~Irisarri and G.~Hauke.
\newblock Stabilized virtual element methods for the unsteady incompressible
  {N}avier--{S}tokes equations.
\newblock {\em Calcolo}, 56(4):38, 2019.

\bibitem{Jiang:1998}
B.~Jiang.
\newblock {\em The least-squares finite element method: theory and applications
  in computational fluid dynamics and electromagnetics}.
\newblock Springer Science \& Business Media, 1998.

\bibitem{Joaquim-Scheer:2016}
M.~Joaquim and S.~Scheer.
\newblock Finite-difference time-domain method for three-dimensional grid of
  hexagonal prisms.
\newblock {\em Wave Motion}, 63:32--54, 2016.

\bibitem{Johnson:1984}
T.~H. Johnson.
\newblock Inertial confinement fusion: {R}eview and perspective.
\newblock {\em P. IEEE}, 72(5):548--594, 1984.

\bibitem{Komissarov-Porth:2021}
S.~Komissarov and O.~Porth.
\newblock Numerical simulations of jets.
\newblock {\em New Astron. Rev.}, 92:101610, 2021.

\bibitem{Krause-Radler:1980}
F.~Krause and K.~H. R\"{a}dler, editors.
\newblock {\em Mean-Field Magnetohydrodynamics and Dynamo Theory}.
\newblock Pergamon, 1980.

\bibitem{Kuzmin-Klyushnev:2020}
D.~Kuzmin and N.~Klyushnev.
\newblock Limiting and divergence cleaning for continuous finite element
  discretizations of the {MHD} equations.
\newblock {\em J. Comput. Phys.}, 407:109230, 2020.

\bibitem{Layton-book}
W.~Layton.
\newblock {\em Introduction to the Numerical Analysis of Incompressible Viscous
  Flows}.
\newblock SIAM, 2008.

\bibitem{Lipnikov-Reynolds-Nelson:2013}
K.~Lipnikov, J.~Reynolds, and E.~Nelson.
\newblock Mimetic discretization of two-dimensional magnetic diffusion
  equations.
\newblock {\em J. Comput. Phys.}, 247:1--16, 2013.

\bibitem{Chen-Liu:2019}
X.~Liu and Z.~Chen.
\newblock The nonconforming virtual element method for the {N}avier-{S}tokes
  equations.
\newblock {\em Adv. Comput. Math.}, 45(1):51--74, 2019.

\bibitem{Chen-Li-Liu:2017}
X.~Liu, J.~Li, and Z.~Chen.
\newblock A nonconforming virtual element method for the {S}tokes problem on
  general meshes.
\newblock {\em Comput. Methods Appl. Mech. Engrg.}, 320:694--711, 2017.

\bibitem{Miao-Yuan-Zhang:2007}
C.~Miao, B.~Yuan, and B.~Zhang.
\newblock Well-posedness for the incompressible magneto-hydrodynamic system.
\newblock {\em Math. Methods Appl. Sci.}, 30:961--976, 2007.

\bibitem{Monk:2003}
P.~Monk.
\newblock {\em Finite Element Methods for {M}axwell's Equations}.
\newblock Oxford University Press, 2003.

\bibitem{Moreau:1990-book}
R.~Moreau.
\newblock {\em Magnetohydrodynamics}, volume~3 of {\em Fluid mechanics and its
  applications}.
\newblock Springer Netherlands, 1 edition, 1990.

\bibitem{Prohl:2008}
A.~Prohl.
\newblock Convergent finite element discretizations of the nonstationary
  incompressible magnetohydrodynamics system.
\newblock {\em ESAIM Math. Model. Numer. Anal.}, 42(6):1065--1087, 2008.

\bibitem{Renardy:2011}
M.~Renardy.
\newblock Well-posedness of the hydrostatic {MHD} equations.
\newblock {\em J. Math. Fluid Mech.}, 14:355--361, 2011.

\bibitem{Rieben-White-Wallin-Solberg:2007}
R.N. Rieben, D.A. White, B.K. Wallin, and J.M. Solberg.
\newblock An arbitrary {L}agrangian-{E}ulerian discretization of {MHD} on {3D}
  unstructured grids.
\newblock {\em J. Comput. Phys.}, 226(1):534--570, 2007.

\bibitem{Robinson-et-al:2008}
A.C. Robinson~et al.
\newblock {ALEGRA}: {A}n arbitrary {L}agrangian-{E}ulerian multimaterial,
  multiphysics code.
\newblock 46th AIAA Aerospace Science Meeting and Exhibit, AIAA-2008-1235,
  2008.

\bibitem{Schoetzau:2004}
D.~Sch{\"o}tzau.
\newblock Mixed finite element methods for stationary incompressible
  magneto-hydrodynamics.
\newblock {\em Numer. Math.}, 96(4):771--800, 2004.

\bibitem{Schroeder-Lube:2017}
Ph.~W. Schroeder and G.~Lube.
\newblock Pressure-robust analysis of divergence-free and conforming {FEM} for
  evolutionary incompressible {N}avier--{S}tokes flows.
\newblock {\em J. Numer. Math.}, 25(4):249--276, 2017.

\bibitem{Sermange-Temam:1983}
M.~Sermange and R.~Temam.
\newblock Some mathematical questions related to the {MHD} equations.
\newblock {\em Comm. Pure Appl. Math.}, 36:635--664, 1983.

\bibitem{Toth:2000}
G.~T{\'o}th.
\newblock The {$\nabla\cdot{\bf B}= 0$} constraint in shock-capturing
  magnetohydrodynamics codes.
\newblock {\em J. Comput. Phys.}, 161(2):605--652, 2000.

\bibitem{Vacca:2018}
G.~Vacca.
\newblock An ${H}^1$-conforming virtual element for {D}arcy and {B}rinkman
  equations.
\newblock {\em Math. Models Methods Appl. Sci.}, 28(01):159--194, 2018.

\bibitem{Chen-He-Wang-Wang:2019}
G.~Wang, F.~Wang, L.~Chen, and Y.~He.
\newblock A divergence free weak virtual element method for the
  {S}tokes--{D}arcy problem on general meshes.
\newblock {\em Comput. Methods Appl. Mech. Engrg.}, 344:998--1020, 2019.

\bibitem{Yee:1966}
K.~Yee.
\newblock Numerical solution of initial boundary value problems involving
  {M}axwell's equations in isotropic media.
\newblock {\em IEEE T. Antenn. Propag.}, 14(3):302--307, 1966.

\end{thebibliography}
\bibliographystyle{plain}
}
% ----------------------------
% Appendices
% ----------------------------

\appendix

\end{document}